\newcommand{\R}{\mathbb{R}}
\newcommand{\N}{\mathbb{N}}
\newcommand{\eqn}[1]{\(#1\)}
\newcommand{\BC}{{\rm BC}}
\newcommand{\cal}{\mathcal}
\newcommand{\sgn}{\rm sgn}
\newcommand{\Span}{\rm Span}
\renewcommand{\div}{\rm div\,}
\newcommand{\delfrac}[4]{\biggl#1{#2\over#3}\biggr#4}
\newcommand{\np}{{{N\over2}\left({\scriptscriptstyle 1}
		-{1\over p}\right)}}
\newcommand{\op}{{\frac{1}{2}\left({\scriptscriptstyle 1}
		-\frac{1}{p}\right)}}
\newtheorem{thm}{Theorem}[chapter]
\newtheorem{lem}[thm]{Lemma}
\newtheorem{prop}[thm]{Proposition}
\theoremstyle{definition}
\theoremstyle{remark}
\newtheorem{obs}[thm]{Observation}
\numberwithin{section}{chapter}
\numberwithin{equation}{chapter}
\begin{document}
	
	\frontmatter
	
	\title{Asymptotic behavior of scalar convection-diffusion equations\footnote{Translated from the original Spanish version \cite{zuazua.notes} and bibliographically updated by Borjan Geshkovski.}}
	
	\author{Enrique Zuazua \\ \vspace{2cm} \small March 19, 2020}
	
	\newcommand{\Adresses}{
		{
			\bigskip
			\footnotesize
			\textbf{Enrique Zuazua} \newline \indent
			\textsc{Chair in Applied Analysis}, \newline \indent 
				\textsc{Alexander von Humboldt-Professorship}
				\newline \indent
				\textsc{Department of Mathematics}, \newline \indent
				\textsc{Friedrich-Alexander-Universit\"at Erlangen-N\"urnberg}
			\newline \indent
			\textsc{91058 Erlangen, Germany}
			\newline \indent \hspace{2.5cm} \textit{and} \newline \indent
			\textsc{Chair of Computational Mathematics}
			\newline \indent 
			\textsc{Fundación Deusto}
			\newline \indent
			\textsc{Av. de the Universidades, 24}
			\newline \indent
			\textsc{48007 Bilbao, Basque Country, Spain}
			\newline \indent \hspace{2.5cm} \textit{and} \newline \indent
			\textsc{Departamento de Matemáticas}, \newline \indent
				\textsc{Universidad Autónoma de Madrid,}
			\newline \indent
			\textsc{28049 Madrid, Spain}
			}
			\newline \indent 
			{\footnotesize \textit{email}: \texttt{enrique.zuazua@fau.de}}
		}

	\thanks{}
	\date{19 March, 2020}
	
	\maketitle
	
	\tableofcontents

		\textbf{Acknowledgments:}  These Notes originate in a course that I delivered at the Institute of Mathematics  of the Universidad Federal de Rio de Janeiro (UFRJ) in March 1991. I had the honor of visiting that institution under the invitation and hospitality of Prof. Carlos Frederico Vasconcellos.
		At that time the Notes were published in Spanish with the title "Comportamiento   asint\'otico   de   ecuaciones   escalares   de
convecci\'on-difusi\'on" in 1993 in the Lecture Notes Series of the Institute. 

Recently, our PhD student Borjan Geshkovski, who has worked in this area, among others, was kind enough to carefully translate the Notes to English, updating the bibliography and catching up some typos.

During the preparation of this last version we received funding from the Alexander von Humboldt-Professorship program, the European Research Council (ERC) Advanced Grant DyCon, the European Unions Horizon 2020 research and innovation programme under the Marie Sklodowska-Curie grant agreement No.765579-ConFlex, the Air Force Office of Scientific Research under Award NO: FA9550-18-1-0242, the Grant MTM2017-92996-C2-1-R COSNET of MINECO (Spain), the Grant ICON-ANR-16-ACHN-0014 of the French ANR and the  Transregio 154 Project "Mathematical Modelling, Simulation and Optimization Using the Example of Gas Networks" of the German DFG.
Mathematical Modelling, Simulation and Optimization Using the Example of Gas Networks.

	
	\mainmatter

		\chapter*{Introduction}
		
	In these notes, we address the problem of asymptotic behaviour when time goes to infinity of the solutions to scalar convection-diffusion equations of the form
		\begin{align} 
			u_t-\Delta u &= a\cdot\nabla\bigl(F(u)\bigr) \hspace{1cm} \text{ in } \R^N \times(0,\infty) \label{eq: 1.1}\\ 
			u(x,0)&=u_0(x) \label{eq: 1.2}
		\end{align}
		where $a$ is a constant vector in $\R^N$, $F\in C^1(\R)$ is such that
		$F(0)=0$ and $u_0\in L^1(\R^N)$. Henceforth $\cdot$ denotes the scalar product
		in $\R^N$.
		The case $a=0$ corresponds to the linear heat equation.
		
	In spatial dimension $N=1$ and when
		\begin{equation*} 
			F(u)=\frac12 u^2
		\end{equation*}
		and $a=1$, equation \eqref{eq: 1.1} was first introduced by H.\ Bateman \cite{B} and J.~M.~Burgers \cite{Bu1, Bu2} as an approximation for equations of fluid-flow, and has since been known as {\em the Burgers equation}. 
It is a very simple model which combines nonlinear wave propagation with the effect of heat conduction.
		
These equations are also a model for the displacement of a fluid in a porous medium, when the effect of capilarity is taken into account (cf.\ S. E. Buckley and M. C. Leverett \cite{BuLe}; see also D. W. Peaceman \cite{P} for a numerical treatment).

More generally, \eqref{eq: 1.1} is a models the regularizing effect of a (small) viscosity in a hyperbolic conservation law.
		
The asymptotic behaviour of the solutions to \eqref{eq: 1.1}--\eqref{eq: 1.2} is strongly linked with the behavior of the initial data $u_0$  when  $|x|\to\infty$. 
In order to show this fact, we consider the case when the spatial dimension $N$ equals one: $N=1$.
Let $a=-1$, let $F$ be a convex function and $$u_\pm =\lim_{|x|\to\pm\infty} u_0(x).$$

Then:

		\begin{enumerate}
			\item[(a)]
			If $u_-<u_+$, the solutions to \eqref{eq: 1.1}--\eqref{eq: 1.2},  as $t\to\infty$,
			converge in $L_{\rm loc}^p(\R)$, $1\le p<\infty$, to a rarefaction wave $p(x/t)$ which is an entropy solution of the Riemann problem with data $u_-$ and $u_+$ for the hyperbolic conservation law: 
	$$u_t+\bigl(F(u)\bigr)_x=0,$$
			(cf.\ A.~M.~Il'in and O.~A.~Oleinik \cite{IO1}, \cite{IO2} and E.~Harabetian
			\cite{H1}). This type of results have been further extended by Z.~P.~Xin \cite{X1} for
			 $2\times 2$ systems in dimension $N=1$, and by Z.~P.~Xin \cite{X2} y
			E.~Harabetian \cite{H2} for scalar equations in multiple spatial dimensions dimensions (for the case where when $F$ is not convex, see H.~F.~Weinberger
			\cite{W}).
			
			\item[(b)]
			If $u_+>u_-$, A.~M.~Il'in and O.~A.~Oleinik \cite{IO1, IO2} proved the existence and stability of solutions of  ``travelling waves'' type: $u=u(x-ct)$. This type of results has been further extended to systems in multiple spatial dimensions by J.~Goodman \cite{G1} and T.~P.~Liu \cite{L} and scalar equations in multiple spatial dimensions by J.~Goodman
			\cite{G2}.
			
			\item[(c)]
			if $u_0\in L^1(\R)$ (which corresponds to the case $u_-=u_+=0$) and if
			\begin{equation*}
				F(u)=\frac12 u^2
			\end{equation*}
			(Burgers equation), thanks to the Hopf-Cole transformation, it can be shown that \eqref{eq: 1.1} admits a one-parameter family of self-similar solutions
			$$u_M(x,t)=t^{-1/2}f_M\biggl(\frac{x}{\sqrt{t}}\biggr)$$
			where the profiles $f_M$ satisfy
			$$\int^\infty_{-\infty}f_M(x)\,dx=M.$$
		\end{enumerate}
		
		On another hand,  it can be shown that if $u_0\in L^1(\R)$ and
		$$\int^\infty_{-\infty}u_0(x)\,dx=M,$$ the solution to \eqref{eq: 1.1}--\eqref{eq: 1.2}
		behaves like $u_M$  when  $t\to\infty$. therefore, in this case, the asymptotic behaviour of the system is described by a one-parameter family of self-similar solutions $\{u_M\}_{M\in\R}$. The parameter $M$ corresponds to the mass of the solution, a quantity which is conserved along the entire trajectory:
		$$\int^\infty_{-\infty}u(x,t)\,dx=\int^\infty_{-\infty }u_0(x)\,dx,
		\qquad\forall t>0.$$
		This type of results have been extended by S.~Kawashine \cite{Kw1},
		T.~P.~Liu \cite{L} and I.~L.~Chern and T.~P.~Liu \cite{CL} for systems of viscous conservation laws in dimension $N=1$.
		The extension to scalar equations in dimensions $N>1$ was done by M.~Escobedo and E.~Zuazua \cite{EZ1}, \cite{EZ2}.
		
		In these notes we address exclusively the case of initial data
		$u_0\in L^1(\R^N)$.
		To fix ideas, we first consider the case of a particular homogeneous nonlinearity such as
		\begin{equation} \label{eq: 1.3}
		\begin{cases}
		u_t-\Delta u=a\cdot\nabla(|u|^{q-1}u) &\hbox{en $\R^N\times(0,\infty)$}\\
		 u(x,0)=u_0(x) &\hbox{en $\R^N$.}
		\end{cases}
		\end{equation}
		Integrating the equation \eqref{eq: 1.3} over $\R^N$ with respect to the spatial variable
		$x$ it can be observed that
		$$\frac{d}{dt}\int_{\R^N} u(x,t)\,dx=0.$$
		Thus, the mass of the solution is conserved over time.
		
		The question at hand is the study of how the mass is
distributed in space when  $t\to\infty$.
		
		As we will show later, we have the following estimate
of the decay of solutions to \eqref{eq: 1.3} in $L^p(\R^N)$:
		\begin{equation} \label{eq: 1.4}
		\|u(t)\|_p\leq C_pt^{-\np},\qquad \forall t>0  
		\end{equation}
		for all $p\in [1,\infty]$ with $C_p>0$ depending on $\|u_0\|_1$.
		Henceforth $\|\cdot\|_p$ denotes the usual norm in $L^p(\R^N)$.
		
		The problem at hand, formulated more precisely is the
behavior of $t^\np u(t)$ in $L^p(\R^N)$  when  $t\to\infty$.
		
		When $a=0$, \eqref{eq: 1.3} is the linear heat equation and its solutions behave like $u_M=MG$, where $M$ is the mass of the solution
		$$M=\int_{\R^N}u_0(x)\,dx$$ and $G$ is the heat kernel, i.e., the fundamental solution of the linear heat equation:
		$$G(x,t)=(4\pi t)^{-N/2} \exp\biggl(-\frac{|x|^2}{4t}\biggr).$$
		That is, for all $p\in[1,\infty]$ it holds
		\begin{equation} \label{eq: 1.5}
			t^\np\|u(t)-u_M(t)\|_p\to 0 \quad \text{  when  } \quad t\to\infty
		\end{equation}
		It is again observed that asymptotic behavior is governed by a one-parameter family of self-similar solutions: $u_M=MG$.
		
		The first question that arises in the general case \eqref{eq: 1.3} is the existence of self-similar solutions.
		
		The equation \eqref{eq: 1.3} is invariant with respect to the scaling
		\begin{equation} \label{eq: 1.6}
		u_\lambda (x,t)=\lambda^{1/(q-1)} u(\lambda x,\lambda^2t),
		\qquad\lambda>0.
		\end{equation}
		The self-similar solutions of \eqref{eq: 1.3}  are the ones that remain invariant with respect to this scaling, i.e.,
		\begin{equation} \label{eq: 1.7}
			u_\lambda (x,t)=u(x,t),\qquad\forall\lambda>0, \forall(x,t)\in
			\R^N \times(0,\infty).
		\end{equation}
		It is easy to check that $u$ satisfies \eqref{eq: 1.7} if and only if it is of the form
		\begin{equation} \label{eq: 1.8}
			u(x,t)=t^{-1/[2(q-1)]} f\biggl(\frac{x}{\sqrt{t}}\biggr).
		\end{equation}
In view of the conservation of mass \eqref{eq: 1.4}, a function $u$ that admits the expression \eqref{eq: 1.8} with profile $f\in L^1(\R^N)$ can only be a solution of \eqref{eq: 1.3} if \eqn{q=\frac{1}{N}+1}.
		
		We therefore see that the system \eqref{eq: 1.3} can only admit self-similar solutions in $L^1(\R^N)$ for the exponent $q=1+\frac{1}{N}.$ In dimension $N=1$ it holds $q=2$ which corresponds to the Burgers equation and which, as we saw earlier, admits self-similar solutions.
		
		When \eqn{q=1+\frac{1}{N}}, the self-similar solutions are 
		of the form
		\begin{equation} \label{eq: 1.9}
			u(x,t)= t^{-N/2} f\biggl(\frac{x}{\sqrt{t}}\biggr).
		\end{equation}
		The heat kernel admits this expression for the profile
		$$f(x)= (4\pi)^{-N/2} \exp\biggl(-\frac{|x|^2}{4}\biggr).$$
		Finding a solution of the form \eqref{eq: 1.9} to the equation \eqref{eq: 1.3} with
		$q=1+\frac{1}{N}$ is equivalent to solving the elliptic equation:
		\begin{equation} \label{eq: 1.10}
			-\Delta f-\frac{x\cdot\nabla f}{2}-\frac{N}{2} f =a\cdot\nabla
			(|f|^{1/N}f) \quad \text{ in } \R^N.
		\end{equation}
		In \cite{AEZ}, using a fixed point method we prove that for any $M\in\R$ the equation \eqref{eq: 1.10} admits a unique solution $f_M$ such that
		$$\int_{\R^N}f_M(x)\,dx=M.$$
		This solution decays exponentially to zero  when  $|x|\to\infty$.
		
		Later, in \cite{EZ1, EZ2} we prove that the general solution to \eqref{eq: 1.3} with
		$q=1+\frac{1}{N}$ behaves like the self-similar solution 
		$$u_M(x,t)=t^{-N/2}f_M\delfrac({x}{\sqrt{t}})$$ if		
	$$\int_{\R^N}u_0(x)\,dx=M.$$ 
	That is, we prove that  \eqref{eq: 1.5} is verified. This is an extension of the results mentioned above for the Burgers equation in dimension $N=1$ and for the linear heat equation.
		
		When $q>1+\frac{1}{N}$, as we proved in \cite{EZ1, EZ2}, the
		solutions to \eqref{eq: 1.3} behave like the heat kernel, that is, \eqref{eq: 1.5} holds with $u_M=MG$. In this case, we say that the system has
		{\em weakly non-linear behavior}.
		
		Let us consider next the equation with linear convection
		\begin{equation} \label{eq: 1.11}
		\begin{cases}
		u_t-\Delta u=a\cdot\nabla u &\hbox{en $\R^N\times(0,\infty)$,}\\
		 u(0)=u_0
		\end{cases}
		\end{equation}
		which corresponds to $q=1$.
		
		The function
		$$v(x,t)= u(x-at,t)$$
		satisfies
		\begin{equation*}
		\begin{cases}
		v_t-\Delta v=0, \\
		v(0)=u_0.
		\end{cases}
		\end{equation*}
		
		In the case of the linear heat equation, $v$ behaves like $u_M=MG$, where $$M=\int_{\R^N}u_0(x)\,dx.$$ Thus,  when 
		$t\to\infty$, $u$ behaves like $MG(x+at,t)$, that is,
		\eqref{eq: 1.5} holds for $u_M(x,t)=MG(x+at,t)$.
		
		The asymptotic behavior of the system  when  $1<q<1+\frac{1}{N}$ is of different nature. To describe it, it is appropriate to distinguish the case $N=1$
		from the case $N\ge 2$. We begin with the one-dimensional case $N=1$ which was solved by M.~Escobedo, J.~L.~Vázquez and E.~Zuazua in \cite{EVZ1}.
		
		In \cite{EVZ1} it is proven that if $N=1$ and $1<q<2$ the solution to \eqref{eq: 1.1} satisfies 
		\begin{equation} \label{eq: 1.12}
			t^{\frac{1}{q}
				\left({\scriptscriptstyle
					1}-\frac{1}{p}\right)}
			\|u(t)-u_M(t)\|_p\to 0 \quad \text{  when  } \quad t\to\infty
		\end{equation}
		
		for all $1\le p<\infty$ where $u_M=u_M(x,t)$ is the unique entropic solution of the hyperbolic system
		\begin{align} \label{eq: 1.13}
		\begin{cases}
		u_t=a(|u|^{q-1}u)_x&\hbox{en $\R \times(0,\infty)$}\\
		u(0)=M\delta.
		\end{cases}
		\end{align}
		The uniqueness of the entropic solution to \eqref{eq: 1.13} was shown T.~P.~Liu and
		M.~Pierre \cite{LP1}. The entropic solution of \eqref{eq: 1.13} is self-similar and for \eqn{a=-\frac{1}{q}} is of the form
		\begin{equation} \label{eq: 1.14}
			u_M(x,t)=\big(\frac{x}{t} \big)^{1/(q-1)}{\cal
				X}_{\big(0,r(t)\big)} 
		\end{equation}
		${\cal X}_{\left(0,r(t)\right)}$ being the characteristic function of the interval $\bigl(0,r(t)\bigr)$ and
		\begin{equation} \label{eq: 1.15}
			r(t)=cM^{(q-1)/q} t^{1/q},\qquad
			c=\biggl(\frac{q}{q-1}\biggr)^{(q-1)/q}. 
		\end{equation}
		In this case we see therefore that the effect of the term of diffusion of the equation \eqref{eq: 1.1} disappears when  $t\to\infty$. We are in the presence of a		{\em strongly non-linear behavior}.
		
		When $N\ge 2$ and $1<q<1+\frac{1}{N}$ there is an analogue result but this time the diffusion disappears only in the direction of the convection $a$. For example, if $a=(1,0,\ldots,0)$ the asymptotic behavior it is given by the self-similar entropy solutions of the hyperbolic-parabolic equation
		\begin{align*}
		\begin{cases}
			u_t-\Delta'u=\partial_{x_1}(|u|^{q-1}u|)&\hbox{en $\R^N
				\times(0,\infty)$,}\\
			u(0)=M\delta
		\end{cases}
		\end{align*}
		$\Delta'$ being the laplacian in the variables
		$x_{2},\ldots,x_N$ (see \cite{EVZ2}).
		
		Let's mention that, in this case $1<q<1+\frac{1}{N}$, One of the main difficulties is to test the following estimate on the rate of decay
in $L^\infty(\R^N)$ of the solutions to \eqref{eq: 1.1}:
		$$\|u(t)\|_\infty\le Ct^{-(N+1)/2q}$$
		which, since $1<q<1+\frac{1}{N}$, is faster than in other cases, both in the linear and in the self-similar or in the weakly non-linear.	
		In the general case, the system \eqref{eq: 1.1} presents these four types of asymptotic behaviors (self-similar, weakly non-linear, linear and strongly non-linear convection) depending on the behavior of nonlinearity $F$ at the origin.
			
		As we have seen, in the case of linear convection, through a simple change of variables, the equation is reduced to the linear heat equation. Therefore, we will deal primarily with the study of behavior
asymptotic self-similar and weakly non-linear. In Chapter 7 we will describe
briefly the results obtained in \cite{EVZ1, EVZ2, EVZ3}.
		
		In the case of weakly non-linear asymptotic behavior, after obtaining optimal estimates of the decay of the $L^p(\R^N)$ norms of the solutions, the result is easily obtained by working on the integral equation associated with the convection-diffusion equation.
In the self-similar case we will adopt the point of view introduced in \cite{Z1}. First, the self-similar variables are introduced since, in these, the profiles of the self-similar solutions are the stationary solutions of the new parabolic equation and the self-similar behavior of the general solution is equivalent to the convergence of the new trajectories to the stationary solutions. Working within the functional framework of the Sobolev spaces with weight introduced by M. Escobedo and O. Kavian \cite{EK1} , we will test the pre-compactness of the trajectories. Finally we will use a dynamic systems argument that allows us to simultaneously prove the existence of stationary solutions and the convergence of trajectories towards these stationary solutions. At this point, the strong contraction in $L^1 (\R^N)$ which is verified by the semigroup associated with  \eqref{eq: 1.1}, will be fundamental.
The notes are organized as follows. In Chapter 1 we will study the linear heat equation and in Chapter 2 the Burgers equation
in dimension $N = 1$ by means of the Hopf-Cole transformation. In Chapter
3 we will study the linear heat equation in the self-similar variables. In
Chapter 4 will address the Cauchy problem \eqref{eq: 1.1}--\eqref{eq: 1.2} and prove a
result of existence and uniqueness of solutions for data $u_0 \in L^1(\R^N)$, as well
as estimates of solution decay in $L^p(\R^N)$. In Chapter 5 we will prove weakly nonlinear behavior for nonlinearities $F$ that behave like $F(s)=|s|^{q-1}s$
with $q>1+\frac{1}{N}$. 
Chapter 6 we will show the self-similar asymptotic behavior when $q=1+\frac{1}{N}$.
Finally, in Chapter 7, we will describe the main results obtained in \cite{EVZ1, EVZ2, EVZ3} for the case
		$q<1+\frac{1}{N}$, we will mention some extensions of the results of these notes as well as some open problems.
	
	\subsection*{Subsequent works}
	Hereinafter, we present a non exhaustive list of developments in the field of large time asymptotics for convection-diffusion equations since these notes were first written.
		\begin{itemize}
			\item Subsequent studies on scalar convection-diffusion and Hamilton-Jacobi equations include \cite{carpio1996large2, escobedo1997long, duro2000large, vazquez2002complexity,  benachour2004asymptotic};
			\item Large time asymptotics for simplified fluid-structure interaction models are studied in		\cite{vazquez2003large, vazquez2006lack, munnier2005large};
			\item An exhaustive treatise of the large time asymptotics of scalar hyperbolic conservation laws is \cite{serre2002l1}, while viscous conservation laws are addressed in \cite{dalibard2010long};
			\item The large time asymptotics for numerical schemes of such equations is studied in \cite{pozo2014large, ignat2015large};
			\item The large time asymptotics for non-local diffusion-convection equations are studied in \cite{chasseigne2006asymptotic, ignat2007nonlocal, ignat2008refined, cazacu2017asymptotic, ignat2018asymptotic};
			\item The case of degenerate diffusion with or without convective terms is addressed in \cite{feireisl19991, laurenccot1998long, carrillo2004wasserstein}	
			\item For the large time asymptotics of the Navier-Stokes equations, we refer to \cite{carpio1996large, gallay2002invariant}
			\item Finally, for scaling properties and large time asymptotics for damped or dissipative wave equations we refer to  \cite{gallay1998scaling, gallay2000scaling, zuazua2003large}.	
		\end{itemize}

		\chapter[The linear heat equation]{The linear heat equation}
		
		\section{Introduction}
		
		Let us consider the linear heat equation
		\begin{align} \label{eq: 2.1}
		\begin{cases}
		u_t-\Delta u=0 &\hbox{en $\R^N \times(0,\infty)$,}\\
		u(0)=u_0.
		\end{cases}
		\end{align}
		As is well known, the fundamental solution or heat kernel (solution of  \eqref{eq: 2.1} with $u_0=\delta_{}=$ Dirac mass at the origin) is given by the formula
		\begin{equation} \label{eq: 2.2}
			G(x,t)=(4\pi t)^{-N/2} \exp\biggl(\frac{-|x|^2}{4t}\biggr).
		\end{equation}
	Given $u_0\in L^1(\R^N)$ the solution to \eqref{eq: 2.1} is given by convolution with the heat kernel:
		\begin{align} \label{eq: 2.3}
			u(x,t)&=[G(\cdot,t)*u_0(\cdot)](x)\\
			&=(4\pi t)^{-N/2}\int_{\R^N}\exp\biggl(\frac{-|x-y|^2}{4t}\biggr)
			u_0(y)\,dy. \nonumber 
		\end{align}
		It is the unique solution of
		\eqref{eq: 2.1} in the class $u\in C\bigl((0,\infty);L^1(\R^N)\bigr)$. Thanks to
		regularizing effect of the heat kernel it holds $u\in C^\infty
		\bigl(\R^N\times(0,\infty)\bigr)$.
		
		Integrating the equation \eqref{eq: 2.1} in $\R^N$ we obtain
		\begin{equation} \label{eq: 2.4}
			\frac{d}{dt}\int u(x,t)\,dx=0.
		\end{equation}
		(Henceforth  $\int$ denotes the integral in all $\R^N$.) From \eqref{eq: 2.4} it follows that the mass of the solution is conserved over all time.
		
		The question that interests us is to study the way in which the mass of the
solution is distributed in space when $t\to\infty$.

		Calculating the $L^p(\R^N)$ norm of the heat kernel we obtain
		\begin{equation} \label{eq: 2.5}
		\|G(t)\|_p\le C_pt^{-\np},\qquad\forall t>0
		\end{equation}
		for all $p\in[1,\infty]$.
		Therefore, thanks to  the Young inequality we obtain
		\begin{align} \label{eq: 2.6}
			\|u(t)\|_p=\|G(t)*u_0\|_p&\le \|G(t)\|_p\|u_0\|_1 \\
			&\le C_pt^{-\np}\|u_0\|_1,\qquad\forall t>0 \nonumber 
		\end{align}
		for all $p\in[1,\infty]$.
		
		The question at hand can be formulated a little more precisely: we wish to study the behavior of $t^\np u(t)$ in
		$L^p(\R^N)$  when  $t\to\infty$.
		
		As we mentioned in the introduction and as we will prove in the
next section, asymptotic behavior is given by $u_M=MG$, $M$ being the mass of the solution, i.e.,
		\begin{equation} \label{eq: 2.7}
			M=\int u_0(x)\,dx.
		\end{equation}
		This result can be seen by a scaling argument.
		
		Let $u$ be the solution to \eqref{eq: 2.1} with initial data $u_0\in L^1(\R^N)$ of mass
		$M$. It is easy to check that the rescaled function
		$$u_\lambda(x,t)=\lambda^Nu(\lambda x,\lambda^2t)$$
		is a solution to
		\begin{align*}
		\begin{cases}
		u_{\lambda ,t}- \Delta u_\lambda =0 &\rlap{\qquad in
			$\R^N\times(0,\infty)$,}\\
		u_\lambda (x,0)=u_{0,\lambda}(x)=\lambda^Nu_0(\lambda x).
		\end{cases}
		\end{align*}
		
		On the other hand
		$$u_{0,\lambda}\to M\delta \quad \text{  when  } \quad \lambda\to\infty $$
		en the following sentido
		$$\int u_{0,\lambda}(x)\varphi(x)\,dx\to M\varphi(0),
		\quad \text{  when  } \quad \lambda\to\infty, \forall\varphi\in \BC(\R^N). $$
		(Henceforth $\BC(\R^N)$ designates the space continuous and bounded functions in $\R^N$.)
		
		Therefore, it is natural  natural to think that $u_\lambda(t)$ converges to $MG(t)$
		 when  $\lambda\to\infty$ for all $t>0$ as $u_M(x,t)=MG(x,t)$ is the solution to the heat equation with with  initial data $M\delta$.
		
	If this convergence takes place in $L^1(\R^N)$ for $t=1$ it holds
		$$ 
		u_\lambda(\cdot,1)\to MG(\cdot,1) \quad \text{ in } L^1(\R^N), \quad \text{  when  } \lambda\to\infty 
		$$
		and this is equivalent to
		$$
		\|u(t)-MG(t)\|_1\to 0 \quad \text {  when  } t\to\infty. 
		$$
		n the next section we will demonstrate this result rigorously. We will see that
		$$
		u_\lambda(\cdot,1)\to MG(\cdot,1) \quad \text{ in } L^p(\R^N), \quad \text{  when  }
		t\to\infty 
		$$
		for all $1\leq p\leq\infty$.
		This answers the question we had
raised.

		\section{Asymptotic behavior}
		
		The following result holds. 
				
		\begin{thm}
			Let $u_0\in L^1(\R^N)$ be such that $M=\int u_0(x)\,dx$.
			Then, the solution $u=G(t)*u_0$ de
			\eqref{eq: 2.1} satisfies
			\begin{equation} \label{eq: 2.8}
				t^\np\|u(t)-MG(t)\|_p\to 0 \quad \text{  when  }  t\to\infty
			\end{equation}
			for all $p\in[1,\infty]$.
		\end{thm}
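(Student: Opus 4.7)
The plan is to reduce the statement to the case $p=1$ via the heat semigroup, then to establish the $L^1$ convergence first for data with finite first moment, and finally for arbitrary $u_0\in L^1(\R^N)$ by a density argument.

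\emph{Reduction to $p=1$.} Using the semigroup identity $G(t)=G(t/2)*G(t/2)$ and the linearity of convolution, I would write
\begin{equation*}
u(t)-MG(t)=G(t/2)*\bigl(u(t/2)-MG(t/2)\bigr).
\end{equation*}
Young's inequality together with \eqref{eq: 2.5} then yields
\begin{equation*}
\|u(t)-MG(t)\|_p\le \|G(t/2)\|_p\,\|u(t/2)-MG(t/2)\|_1\le C_p\,t^{-\np}\|u(t/2)-MG(t/2)\|_1,
\end{equation*}
so \eqref{eq: 2.8} follows once I know $\|u(t)-MG(t)\|_1\to 0$ as $t\to\infty$. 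This reduction handles $p=\infty$ as well since \eqref{eq: 2.5} covers that endpoint.

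\emph{$L^1$ convergence on a dense subclass.} Exploiting $M=\int u_0$, I would write
\begin{equation*}
u(x,t)-MG(x,t)=\int\bigl[G(x-y,t)-G(x,t)\bigr]u_0(y)\,dy,
\end{equation*}
and use the fundamental theorem of calculus, $G(x-y,t)-G(x,t)=-\int_0^1 y\cdot\nabla G(x-sy,t)\,ds$, combined with Fubini and translation invariance of the $L^1$ norm, to obtain $\|G(\cdot-y,t)-G(\cdot,t)\|_1\le |y|\,\|\nabla G(\cdot,t)\|_1$. A direct Gaussian computation (rescaling $x=\sqrt{t}\,\xi$ in $\nabla G(x,t)=-\frac{x}{2t}G(x,t)$) gives $\|\nabla G(\cdot,t)\|_1=Ct^{-1/2}$. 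Hence
\begin{equation*}
\|u(t)-MG(t)\|_1\le Ct^{-1/2}\int|y||u_0(y)|\,dy,
\end{equation*}
which tends to $0$ whenever the first moment of $u_0$ is finite; this applies in particular to any $u_0\in C_c^\infty(\R^N)$.

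\emph{Density and main subtlety.} For general $u_0\in L^1(\R^N)$ and $\varepsilon>0$, I would choose $v_0\in C_c^\infty(\R^N)$ with $\|u_0-v_0\|_1<\varepsilon$, set $M_v=\int v_0$ (so that $|M-M_v|<\varepsilon$), and let $v=G*v_0$. The $L^1$-contractivity of the heat semigroup, i.e.\ \eqref{eq: 2.6} with $p=1$ applied to $u_0-v_0$, gives $\|u(t)-v(t)\|_1<\varepsilon$ for all $t>0$. Combining this with the previous step applied to the compactly supported datum $v_0$,
\begin{equation*}
\|u(t)-MG(t)\|_1\le\|u(t)-v(t)\|_1+\|v(t)-M_vG(t)\|_1+|M-M_v|\,\|G(t)\|_1<2\varepsilon+o(1).
\end{equation*}
Letting $t\to\infty$ and then $\varepsilon\to 0$ concludes the proof. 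The main subtlety is precisely in this last step: the quantitative rate $t^{-1/2}$ from the second step fails for arbitrary $L^1$ data (one cannot expect any rate in \eqref{eq: 2.8} without extra decay assumptions on $u_0$), and the density argument is what transfers a quantitative bound on a dense subclass into the qualitative $o(1)$ statement on all of $L^1(\R^N)$.
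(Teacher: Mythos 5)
Your proof is correct, and its overall architecture coincides with the paper's: a quantitative $O(t^{-1/2})$ estimate on the dense class of data with finite first moment, followed by a density argument that uses the uniform bound $t^{\np}\|G(t)*\psi\|_p\le C_p\|\psi\|_1$ (i.e.\ \eqref{eq: 2.6}) to pass to all of $L^1(\R^N)$. The difference lies in how the dense-class estimate is organized. The paper's Lemma 1.2 works directly in $L^p$ for every $p$: it Taylor-expands the Gaussian kernel, computes the $p=\infty$ and $p=1$ cases explicitly, and interpolates to obtain $\|G(t)*\varphi-MG(t)\|_p\le C_p t^{-\np-\frac{1}{2}}\|\varphi\|_{L^1(\R^N;|x|)}$. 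You prove only the $p=1$ case --- packaged as the translation estimate $\|G(\cdot-y,t)-G(\cdot,t)\|_1\le|y|\,\|\nabla G(t)\|_1$, which is the same Taylor-expansion idea in different clothing --- and then recover every $p\in[1,\infty]$ at once from the semigroup factorization $u(t)-MG(t)=G(t/2)*\bigl(u(t/2)-MG(t/2)\bigr)$ together with Young's inequality. This buys a shorter argument with no interpolation step and no separate $L^\infty$ computation; and nothing is lost quantitatively, since feeding the $t^{-1/2}$ rate from the $L^1$ step into your reduction reproduces the paper's $t^{-\np-\frac{1}{2}}$ rate for moment data. A second, minor deviation: the paper arranges the approximants $\varphi_n$ to have exactly mass $M$, whereas you let the mass of $v_0$ drift and absorb the discrepancy into the extra term $|M-M_v|\,\|G(t)\|_1$; both work, and yours avoids having to correct the mass of the approximating datum.
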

		
		The proof of this theorem is based on the following lemma from
		\cite{EZ2}.
		
		\begin{lem}
			(a) 
			For all $p\in[1,\infty]$ there exists a constant $C_p>0$ such that
			\begin{equation} \label{eq: 2.9}
				\|G(t)*\varphi\|_p \le
				C_pt^{-\np-\frac{1}{2}}\|\varphi\|_{L^1(\R^N;|x|)},
				\qquad\forall t>0
			\end{equation}
			for all $\varphi\in L^1(\R^N;1+|x|)$ such that
			$\int\varphi(x)\,dx=0$. \\

			(b) For all $p\in[1,\infty]$ there exists a constant
			$C'_p>0$ such that if $\varphi\in L^1(\R^N;1+|x|)$ satisfies
			$\int\varphi(x)\,dx=M$
			then it holds
			\begin{equation} \label{eq: 2.10}
				\|G(t)*\varphi -MG(t)\|_p\le
				C'_pt^{-\np-\frac{1}{2}}\|\varphi\|_{L^1(\R^N;|x|)},
				\qquad\forall t>0.
			\end{equation}
		\end{lem}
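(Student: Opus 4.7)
My plan is to observe first that (a) is the special case $M=0$ of (b), so I would prove (b) and recover (a) as a corollary. The key algebraic manipulation exploits the moment assumption $\int\varphi(y)\,dy = M$ by absorbing the constant $MG(x,t)$ inside the convolution integral:
\begin{equation*}
\bigl(G(t)*\varphi\bigr)(x) - MG(x,t) = \int_{\R^N}\bigl[G(x-y,t) - G(x,t)\bigr]\varphi(y)\,dy.
\end{equation*}
This is why the weighted $L^1$ hypothesis is natural: after this rewriting, $\varphi$ only couples with the \emph{increment} of $G$ in the space variable, which vanishes linearly in $y$.

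Next I would apply the fundamental theorem of calculus along the segment from $x$ to $x-y$,
\begin{equation*}
G(x-y,t) - G(x,t) = -\int_0^1 y\cdot\nabla G(x-sy,t)\,ds,
\end{equation*}
substitute this into the integral above, and interchange the order of integration. Using Minkowski's integral inequality and the translation invariance of the $L^p_x$ norm, I obtain
\begin{equation*}
\|G(t)*\varphi - MG(t)\|_p \le \int_0^1\!\!\int_{\R^N}|y||\varphi(y)|\,\|\nabla G(t)\|_p\,dy\,ds = \|\nabla G(t)\|_p\,\|\varphi\|_{L^1(\R^N;|x|)}.
\end{equation*}

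The remaining ingredient is a scaling identity for the gradient of the heat kernel. Differentiating \eqref{eq: 2.2} one checks $\nabla G(x,t) = t^{-N/2-1/2}\Phi(x/\sqrt{t})$ where $\Phi$ is a fixed Schwartz function, so that
\begin{equation*}
\|\nabla G(t)\|_p = C_p\,t^{-\np - \frac{1}{2}}
\end{equation*}
for every $p\in[1,\infty]$. Combining this with the previous bound yields exactly \eqref{eq: 2.10}, and taking $M=0$ gives \eqref{eq: 2.9}.

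The part I expect to require the most care is the rigorous justification of the interchanges of integration. The hypothesis $\varphi\in L^1(\R^N;1+|x|)$ guarantees that $|y||\varphi(y)|$ is integrable in $y$, while $\|\nabla G(\cdot-sy,t)\|_p$ is finite and independent of $(s,y)$, so both Fubini and Minkowski's integral inequality apply without obstruction; this verification is essential but not delicate. Otherwise, the argument is just a first-order Taylor expansion of the Gaussian at $y=0$, combined with parabolic scaling of its derivative.
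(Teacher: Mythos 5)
Your proof is correct, and it rests on the same core idea as the paper's: a first-order Taylor expansion (fundamental theorem of calculus) of the Gaussian kernel in the space variable, made possible by subtracting $MG(x,t)$ inside the integral. The one genuine difference is in how the $L^p$ norms are handled. The paper estimates the endpoints $p=1$ and $p=\infty$ separately by direct computation on the explicit Taylor remainder and then recovers intermediate $p$ by interpolation, whereas you apply Minkowski's integral inequality to the identity
\begin{equation*}
\bigl(G(t)*\varphi\bigr)(x)-MG(x,t)=-\int_{\R^N}\int_0^1 y\cdot\nabla G(x-sy,t)\,ds\,\varphi(y)\,dy
\end{equation*}
and reduce everything to the single scaling identity $\|\nabla G(t)\|_p=C_p\,t^{-\np-\frac12}$. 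Your route is cleaner and handles all $p\in[1,\infty]$ uniformly in one step; the paper's route is more computational but self-contained at the level of explicit Gaussian integrals. You also reverse the logical order, proving (b) and obtaining (a) as the case $M=0$, while the paper proves (a) and observes that (b) is the same computation; both orderings are legitimate since the two statements are equivalent under the rewriting you use. Your remarks on justifying Fubini and Minkowski via $\varphi\in L^1(\R^N;1+|x|)$ are exactly the right hypotheses, so there is no gap.
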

		
		\begin{obs}
			We used the space $L^1$ with weights:
			$$L^1(\R^N;1+|x|)=\{\,\varphi\in L^1(\R^N): \int |\varphi(x)|(1+|x|)\,dx
			<\infty\,\}.$$
			The $\|\cdot\|_{L^1(\R^N;|x|)}$ norm is, by definition,
			$\|\varphi\|_{L^1(\R^N;|x|)}=\int|\varphi(x)||x|\,dx$.
		\end{obs}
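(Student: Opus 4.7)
The final statement is an Observation whose content is purely definitional: it introduces the weighted Lebesgue space $L^1(\R^N;1+|x|)=\{\varphi\in L^1(\R^N):\int|\varphi(x)|(1+|x|)\,dx<\infty\}$ and fixes the notation $\|\varphi\|_{L^1(\R^N;|x|)}:=\int|\varphi(x)||x|\,dx$ for a related weighted quantity. There is no proposition, implication, or identity to be established, so a proof sketch in the literal sense is not applicable. The Observation's role in the narrative is merely to clarify the two pieces of notation appearing on the right-hand sides of the estimates \eqref{eq: 2.9} and \eqref{eq: 2.10} in the preceding Lemma, where the hypotheses require $\varphi\in L^1(\R^N;1+|x|)$ and the bounds are expressed in terms of $\|\varphi\|_{L^1(\R^N;|x|)}$.

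If the task is reinterpreted as verifying that these definitions are well-posed, the plan would be short and entirely routine. First, I would identify $L^1(\R^N;1+|x|)$ with the standard Lebesgue space $L^1(\R^N,d\mu)$ for the weighted measure $d\mu=(1+|x|)\,dx$, which is $\sigma$-finite and mutually absolutely continuous with Lebesgue measure. The candidate norm $\int|\varphi|(1+|x|)\,dx$ is then literally the $L^1(\mu)$-norm, so positive homogeneity, the triangle inequality, and completeness (via the Riesz--Fischer theorem applied to $L^1(\mu)$) are inherited from the general theory of Lebesgue spaces. Next, I would observe that $\|\cdot\|_{L^1(\R^N;|x|)}$ is a norm on the subspace $\{\varphi\in L^1(\R^N):\int|\varphi||x|\,dx<\infty\}$: the only subtlety is whether $\|\varphi\|_{L^1(\R^N;|x|)}=0$ can occur for a nontrivial class, but since the weight $|x|$ vanishes only on the Lebesgue-null set $\{0\}$, the equivalence-class identification built into $L^1$ neutralizes this immediately. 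Finally, the decomposition $\|\varphi\|_{L^1(\R^N;1+|x|)}=\|\varphi\|_1+\|\varphi\|_{L^1(\R^N;|x|)}$ follows directly from linearity of the integral, making the condition $\varphi\in L^1(\R^N;1+|x|)$ equivalent to simultaneous finiteness of $\|\varphi\|_1$ and $\|\varphi\|_{L^1(\R^N;|x|)}$.

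The main non-obstacle is simply the recognition that everything here is standard weighted-$L^1$ machinery; no part of the verification is delicate, and no new analytical input is required beyond what the general measure-theoretic framework supplies.
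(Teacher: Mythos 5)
You are right that this Observation is purely a notational remark — the paper offers no proof for it, since it merely fixes the definitions of $L^1(\R^N;1+|x|)$ and of the seminorm $\|\cdot\|_{L^1(\R^N;|x|)}$ used in Lemma 1.2. Your reading and your routine well-posedness check are consistent with the paper's treatment, and nothing further is required.
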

		
		\begin{proof}[Proof of Lemma 1.2] 

			(a) It holds
				$$\bigl(G(t)*\varphi)(x)=(4\pi t)^{-N/2} \int\exp\biggl(
				-\frac{|x-y|^2}{4t}\biggr)\varphi(y)\,dy$$
				and since $\int\varphi(x)\,dx=0$, we obtain
				$$\bigl(G(t)*\varphi\bigr)(x)=(4\pi t)^{-N/2}
				\int\biggl[\exp\biggl(-\frac{|x-y|^2}{4t}\biggr)
				-\exp\biggl(-\frac{|x|^2}{4t}\biggr)\biggr]\varphi(y)\,dy.$$
				By Taylor's theorem we know that 
								$$\exp\biggl(-\frac{|x-y|^2}{4t}\biggr)
				-\exp\biggl(-\frac{|x|^2}{4t}\biggr)=\frac{1}{2t}
				\int_0^1 y\cdot(x-\theta y)
				\exp\biggl[-\frac{(x-\theta y)^2}{4t}\biggr]\,d\theta$$
				and therefore
				\begin{align} \label{eq: 2.11}
					&\bigl(G(t)*\varphi\bigr)(x) \\
					&=\frac{(4\pi  t)^{-N/2}}{\sqrt{t}}\int_0^1\!\!\!\int
						\frac{y\cdot(x-\theta y)}{2\sqrt{t}}\exp\biggl(-\frac{|x-\theta
							y|^2}{4t}\biggr)\varphi(y)\,dy\,d\theta. \nonumber 
				\end{align}
				Taking $L^\infty$ norms it holds:
				\begin{align*}
					&\|G(t)*\varphi\|_\infty \\
					&\le (4\pi)^{-N/2}
					t^{-N/2-1/2}\|\varphi\|_{L^1(\R^N;|x|)}{\sup_{x,y\in \R^N}
						\int_0^1\biggl\{\frac{|x-\theta y|}{2\sqrt{t}}
						\exp \biggl(-\frac{|x-\theta y|^2}{4t}\biggr)\,d\theta\biggr\}}\\
					&\le(4\pi)^{-N/2} \| |z|\exp (-|z|^2)\|_\infty t^{-N/2-1/2}
					\|\varphi\|_{L^1(\R^N; |x|)}.
				\end{align*}
				Therefore we obtain \eqref{eq: 2.9} for $p=\infty$ with $C_\infty=(4\pi)^{-N/2} \|
				|z| \exp (-|z|^2) \|_\infty$.

				Taking $L^1$ norms in \eqref{eq: 2.11} and using the Fubini theorem we obtain:
				\begin{align*}
					&\|G(t)*\varphi\|_\infty \\
					&\le(4\pi)^{-N/2} t^{-N/2-1/2} {\int_0^1\!\!\!\int 2\frac{|x-\theta y|}{2\sqrt{t}} \exp \biggl(-\frac{|x-\theta y|^2}{4t}\biggr)
						|y\varphi(y)|\,dx\,dy\,d\theta} \\
					&\le\pi^{-N/2}t^{-1/2}\| |z|\exp(-|z|^2)\|_1 \int_0^1\!\!\!\int |y|
					|\varphi(y)|\,dy\,d\theta\\
					&\le\pi^{-N/2}\| |z|\exp(-|z|^2)\|_1  \|\varphi\|_{L^1(\R^N;
						|x|)}t^{-1/2}, \qquad\forall t>0.
				\end{align*}
				therefore, we obtain \eqref{eq: 2.9} for $p=1$ with $C_1=\pi^{-N/2}
				\| |z|\exp(-|z|^2)\|_1$.
				
				The general case is obtained by interpolation:
				\begin{align*}
					&\|G(t)*\varphi\|_p \le \|G(t)*\varphi\|_\infty^{(p-1)/p}
					\|G(t)*\varphi\|_1^{1/p}\\
					&\le 2^{-N(p-1)/p}\pi^{-N/2} \| |z|\exp(-|z|^2)\|_\infty^{(p-1)/p}
				{\| |z|\exp(-|z|^2)\|_1^{1/p} \|\varphi\|_{L^1(\R^N;|x|)}
					t^{-\np-\frac{1}{2}}.}
				\end{align*}
				That is to say, we obtain \eqref{eq: 2.9} with
				$$C_p=2^{-N(p-1)/p}\pi^{-N/2} \| |z|\exp(-|z|^2)\|_\infty^{(p-1)/p}
				\| |z|\exp(-|z|^2)\|_1^{1/p}.$$
				This concludes the proof of part (a).
				
				(b) The proof of this part is analogous to the previous one. Indeed, since $\int\varphi(x)\,dx=M$,
				\begin{align*}
					&\bigl(G(t) * \varphi\bigr) (x) -MG(x,t)\\
					&=(4\pi t)^{-N/2}
					\biggl[\int\exp\biggl(-\frac{|x-y|^2}{4t}\biggr) \varphi (y) \,dy
					{-M\exp\biggl(-\frac{|x|^2}{4t}\biggr)\biggr]}\\
					&=(4\pi t)^{-N/2}
					\int\biggl[\exp\biggl(-\frac{|x-y|^2}{4t}\biggr)
					{-\exp\biggl(-\frac{|x|^2}{4t}\biggr)\biggr]\varphi(y)\,dy}
				\end{align*}
				and therefore we can just redo the estimates from part (a).
		\end{proof}
		
		\begin{obs}
			Lemma 1.2 can be proven easily using results from
			J.~Duoandikoetxea and E.~Zuazua \cite{DZ} which ensures that for all
			$u_0\in L^1 (\R^N; 1+|x|)$ there exists $f\in\bigl(L^1(\R^N)\bigr)^N$ such that
			$$u_0=M\delta+\div f$$
			with $M=\int u_0\,dx$. The vector-valued function $f$ can be computed explicitly. It holds
			$$f_j(x)=\int_0^1 x_j u_0(tx)t^{N-1}\,dt.$$
			From this expression one deduces that  $\|f\|_{(L^1(\R^N))^N}\le
			C\|u_0\|_{L^1(\R^N;|x|)}$.
		\end{obs}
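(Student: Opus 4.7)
The plan is to exploit the decomposition $\varphi = M\delta + \div f$ from \cite{DZ} (applied with $\varphi$ playing the role of $u_0$) in order to reduce both parts of Lemma 1.2 to a single convolution estimate involving $\nabla G$. Since $G(t)*\delta = G(t)$ and convolution commutes with differentiation, the decomposition immediately gives
\[
G(t)*\varphi - MG(t) \;=\; G(t)*\div f \;=\; \sum_{j=1}^{N} (\partial_{x_j} G(t)) * f_j.
\]
Part (a) is then nothing but the subcase $M = 0$, in which $\varphi = \div f$ and the same identity reads $G(t)*\varphi = \sum_j (\partial_{x_j} G(t))*f_j$; so both statements reduce to estimating this one object.

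To bound the right-hand side in $L^p$, I would apply Young's inequality componentwise:
\[
\|G(t)*\varphi - MG(t)\|_p \;\le\; \sum_{j=1}^{N} \|\partial_{x_j} G(t)\|_p \, \|f_j\|_1.
\]
The bound $\|f\|_{(L^1(\R^N))^N} \le C\,\|\varphi\|_{L^1(\R^N;|x|)}$ is supplied directly by \cite{DZ}; the kernel factor is handled by a routine calculation from $\partial_{x_j} G(x,t) = -\tfrac{x_j}{2t}G(x,t)$ and the parabolic rescaling $y = x/\sqrt{t}$, which give $\|\partial_{x_j} G(t)\|_p = c_p\, t^{-\np-\frac12}$ for every $p \in [1,\infty]$. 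Combining the two inputs reproduces \eqref{eq: 2.10}, and the $M=0$ specialization yields \eqref{eq: 2.9}.

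The substantive step, and the main obstacle, is therefore the decomposition lemma itself, which is cited as a black box. With the explicit formula for $f_j$ quoted in the observation, its proof splits into (i) the $L^1$ bound on $f_j$, obtained by Fubini and the change of variable $y = tx$ in which the Jacobian $t^{-N}$ balances against the weight $t^{N-1}$ from the integrand and isolates the $|x|$-weighted $L^1$-norm of $u_0$ on the right; and (ii) the distributional identity $\div f = u_0 - M\delta$, which I would verify by pairing against $\psi \in C_c^\infty(\R^N)$ and inserting the first-order Taylor identity $\psi(x) - \psi(0) = \int_0^1 x \cdot \nabla \psi(sx)\, ds$, the Dirac mass emerging precisely because $\int u_0 = M$. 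Once this decomposition is granted, the extra factor $t^{-1/2}$ in \eqref{eq: 2.9}--\eqref{eq: 2.10} compared to the naive bound $\|G(t)\|_p \|\varphi\|_1 \lesssim t^{-\np}$ is simply the one-derivative improvement one always obtains by trading $G$ for $\nabla G$ in a Young-type convolution estimate.
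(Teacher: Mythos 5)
Your reduction is exactly the one the observation has in mind, and it is genuinely different from the route the paper actually takes to prove Lemma 1.2: there the kernel difference $\exp(-|x-y|^2/4t)-\exp(-|x|^2/4t)$ is expanded by Taylor's theorem inside the convolution, so the derivative lands on the Gaussian directly and no decomposition of $\varphi$ is ever used. Your route --- write $G(t)*\varphi-MG(t)=\sum_j\partial_{x_j}G(t)*f_j$, treat part (a) as the subcase $M=0$, and apply Young's inequality with $\|\partial_{x_j}G(t)\|_p=c_p\,t^{-\np-\frac12}$ and $\|f\|_{(L^1(\R^N))^N}\le C\|\varphi\|_{L^1(\R^N;|x|)}$ --- is correct and is precisely what makes the observation true; both arguments trade the same half power of $t$ for the same weighted $L^1$ norm, merely distributing the ``one derivative'' differently (on the kernel versus on the datum).

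One concrete warning about your step (i): with the kernel exactly as printed, $f_j(x)=\int_0^1x_j\,u_0(tx)\,t^{N-1}\,dt$, the Jacobian does \emph{not} balance. The change of variables $y=tx$ turns $\int|x_j|\,|u_0(tx)|\,dx$ into $t^{-N-1}\int|y_j|\,|u_0(y)|\,dy$ (a factor $t^{-N}$ from $dx$ and a further $t^{-1}$ from $|x_j|=|y_j|/t$), so against the weight $t^{N-1}$ you are left with $\int_0^1t^{-2}\,dt=\infty$; moreover, pairing this $f$ against $\nabla\psi$ formally reproduces $u_0$ with no Dirac mass at all. The formula must be read as $f_j(x)=-x_j\int_1^\infty u_0(tx)\,t^{N-1}\,dt$, equivalently $-x_j\int_0^1u_0(x/s)\,s^{-N-1}\,ds$. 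With that correction the same computation yields $\int_1^\infty t^{-2}\,dt=1$, hence $\|f_j\|_1\le\|u_0\|_{L^1(\R^N;|x|)}$, and your Taylor pairing $\int_0^1x\cdot\nabla\psi(tx)\,dt=\psi(x)-\psi(0)$ does produce the $-M\delta$ term; every remaining step of your argument then closes as written.
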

		
		\begin{proof}[Proof of Theorem 1.1]
			
			Lemma 1.2 provides \eqref{eq: 2.8} in the case when $u_0\in L^1(\R^N; 1+|x|)$. We obtain the general case by density. Given arbitrary $u_0\in L^1(\R^N)$ with $\int u_0(x)\,dx=M$ we construct a sequence $\varphi_n\in
			L^1(\R^N;1+|x|)$ such that $\int\varphi_n(x)\,dx=M$ For all
			$n\in \N$ y
			\begin{equation} \label{eq: 2.12}
				\varphi_n\to u_0 \quad \text{ in } L^1(\R^N) \text{  when  } n\to\infty. 
			\end{equation}
			We have
			\begin{align} \label{eq: 2.13}
			&{t^\np \|G(t) * u_0 - MG(t)\|_p}\\
			&\le t^\np \{ \|G(t) *
				\varphi_n - MG(t)\|_p + \|G(t) * (\varphi_n -u_0)\|_p\}. \nonumber 
			\end{align}
			By virtue of \eqref{eq: 2.6} we have
			$$t^\np\|G(t) * (\varphi_n -u_0)\|_p\le C_p\|\varphi_n -u_0\|_1$$
			and therefore, thanks to  \eqref{eq: 2.12}, given arbitrary $\epsilon>0$, there exists
			$k\in\N$ such that 
			\begin{equation} \label{eq: 2.14}
			\sup_{t>0}\Bigl\{t^\np \|G(t) * (\varphi_k -u_0)\|_p\Bigr\}\le
			\epsilon/2. 
			\end{equation}
			On the other hand, thanks to Lemma 1.2, for fixed $k$,
			\begin{equation*} 
				t^\np \|G(t) * \varphi_k -MG(t)\|_p\to 0 \quad \text{  when  } t\to\infty 
			\end{equation*}
			and therefore, for $t_0>0$ sufficiently large it holds
			\begin{equation} \label{eq: 2.15}
				t^\np \|G(t) * \varphi_k -MG(t)\|_p\le \epsilon, \qquad\forall t\ge t_0
			\end{equation}
			which implies \eqref{eq: 2.8}.
		\end{proof}
		
		\section{The heat equation with linear convection}
		
		Let $a$ be a constant vector of $\R^N$, and consider the following linear convection-diffusion equation:
		\begin{align} \label{eq: 2.16}
		\begin{dcases}
			u_t-\Delta u = a\cdot \nabla u &\text{in } \R^N \times(0,\infty) \\
			u(0)= u_0\in L^1(\R^N).
		\end{dcases}
		\end{align}
		It can easily be shown that if $u$ satisfies \eqref{eq: 2.16} then
		\begin{equation} \label{eq: 2.17}
			v(x,t)=u (x-at,t)
		\end{equation}
		satisfies
		\begin{align} \label{eq: 2.18}
		\begin{cases}
				v_t-\Delta v =0  &\hbox{in $\R^N\times(o,\infty)$}\\
				v(0)=u_0.
		\end{cases}
		\end{align}
		As the unique solution to \eqref{eq: 2.18} in $C\bigl([0,\infty); L^1(\R^N)\bigr)$
		is given by
		$$v(x,t)=[G(t)*u_0] (x)$$
		undoing the change of variables \eqref{eq: 2.17} we obtain that the unique solution
		of \eqref{eq: 2.16} in $C\bigl([0,\infty); L^1(\R^N)\bigr)$ is the following:
		\begin{equation} \label{eq: 2.19}
			u(x,t)=[G(t)*u_0] (x+at).
		\end{equation}
		From this expression for $u$ we deduce  that the solutions to \eqref{eq: 2.16} have the same decay properties in $L^p$ as the solutions to the
		linear heat equation, i.e., the solutions to \eqref{eq: 2.16} satisfy \eqref{eq: 2.6}.
		
		From Theorem 1.1 we obtain the following result which provide
		the asymptotic behavior of the solutions to \eqref{eq: 2.16}.
		
		\begin{thm}
			For all $u_0\in L^1(\R^N)$ such that $\int u_0(x)\,dx=M$ la
			solution $u$ to \eqref{eq: 2.16} satisfies
			\begin{equation}
				t^\np\|u(x,t)-MG(x+at,t)\|_p\to 0, \quad \text{  when  } t\to\infty
			\end{equation}
			for all $p\in[1,\infty]$.
		\end{thm}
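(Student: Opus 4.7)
The plan is to exploit the explicit formula \eqref{eq: 2.19} for the solution, which reduces the problem to a simple translation of the result already established in Theorem 1.1. First I would write $u(x,t) = [G(t)*u_0](x+at)$ using \eqref{eq: 2.19}, and observe that $MG(x+at,t)$ is likewise just the translation by $at$ of $MG(\cdot,t)$. Therefore
\begin{equation*}
u(x,t) - MG(x+at,t) = \bigl[G(t)*u_0 - MG(t)\bigr](x+at),
\end{equation*}
so that the difference between the two functions at a given time is nothing more than a spatial translate of the heat-equation difference $G(t)*u_0 - MG(t)$.

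Next I would invoke the translation invariance of the $L^p(\R^N)$ norm: for each fixed $t>0$,
\begin{equation*}
\|u(\cdot,t) - MG(\cdot+at,t)\|_p = \|G(t)*u_0 - MG(t)\|_p.
\end{equation*}
Multiplying both sides by $t^\np$ and letting $t\to\infty$, Theorem 1.1 directly gives
\begin{equation*}
t^\np\|u(\cdot,t) - MG(\cdot+at,t)\|_p \to 0,
\end{equation*}
which is the desired conclusion for every $p\in[1,\infty]$.

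There is really no serious obstacle here since the convection is linear and constant-coefficient: the change of variables $v(x,t)=u(x-at,t)$ transforms \eqref{eq: 2.16} into the pure heat equation \eqref{eq: 2.18}, and both the $L^p$-norm and the weighted-$L^1$ condition $\int u_0 \, dx = M$ are invariant under that translation. The only point to be careful about is matching up the correct sign of the translation — namely that the solution evaluated at $x+at$ (not $x-at$) corresponds to the heat solution evaluated at $x$, as is consistent with the derivation of \eqref{eq: 2.19}. Once this bookkeeping is done, the statement is an immediate corollary of Theorem 1.1.
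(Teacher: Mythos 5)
Your proposal is correct and follows exactly the route the paper intends: the explicit formula \eqref{eq: 2.19} obtained from the change of variables \eqref{eq: 2.17}, translation invariance of the $L^p$ norm, and a direct application of Theorem 1.1. No gaps.
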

		
		\chapter{The Burgers equation}
		
		\section{The Hopf-Cole transform}
		
		Let us consider the viscous Burgers equation:
		\begin{align} \label{eq: 3.1}
		\begin{cases}
			u_t-u_{xx}= (u^2)_x &\hbox{en $\R\times(0,\infty)$}\\
			u(x,0)=u_0(x)&\hbox{en $\R$.}
		\end{cases}
		\end{align}
		At the beginning of the 1950s, E.~Hopf \cite{Ho} and J.~D.~Cole \cite{Co}
		independently discovered that the equation \eqref{eq: 3.1} can be
reduced to the linear heat equation by means of a simple change of
variables This change of variables is known since the {\em Hopf-Cole transform}.
		
		Given $u$ solution to \eqref{eq: 3.1} we define 
		$$v(x,t)=\int_{-\infty}^x u(z,t)\,dz.$$
		
		It can easily be shown that  $v$ satisfies
		\begin{align*}
		\begin{dcases}
			v_t-v_{xx}=(v_x)^2 &\hbox{en $\R\times(0,\infty)$}\\
			v(x,0)=v_0(x)=\int_{-\infty}^x u_0(z)\,dz
		\end{dcases}
		\end{align*}
		We define $w(x,t)=\exp\bigl(v(x,t)\bigr)$. So, $w$ is a solution to
		the equation heat equation:
		\begin{align} \label{eq: 3.2}
		\begin{dcases}
			w_t-w_{xx}= 0 &\hbox{en $\R\times(0,\infty)$}\\
			w(x,0)=w_0(x)=\exp\biggl(\int_{-\infty}^x u_0(z)\,dz\biggr) 
		\end{dcases}
		\end{align}
		We see therefore that if $u$ is solution to \eqref{eq: 3.1}, so
		\begin{equation} \label{eq: 3.3}
			w(x,t)=\exp\biggl(\int_{-\infty}^x u(z,t)\,dz\biggr) 
		\end{equation}
		satisfies \eqref{eq: 3.2}.
		
		On the other hand we see that if $u_0\in L^1(\R)$ with
		${\int_{-\infty}^\infty u_0(x)\,dx=M}$ and $\|u_0\|_1=L$
		then
		\begin{subequations}
			\label{eq: 3.4}
			\begin{align} 
			&w_0\in C(\R)\cap L^\infty(\R) \label{eq: 3.4a}  \\
			&e^{-|L|}\le w_0(x)\le e^{|L|},\qquad\forall x\in\R \label{eq: 3.4b}  \\ 
			&\lim_{x\to-\infty} w_0(x)=1 \label{eq: 3.4c}  \\
			&\lim_{x\to+\infty} w_0(x)=e^M \label{eq: 3.4d}  
			\end{align}
		\end{subequations}
		The solution $w$ to \eqref{eq: 3.2} is given by convolution with the one-dimensional heat kernel
		\begin{equation} \label{eq: 3.5}
			w(x,t)=[G(\cdot,t)*w_0(\cdot)](x) 
		\end{equation}
		where
		$$G(x,t)=(4\pi t)^{-1/2}\exp\biggl(-\frac{|x|^2}{4t}\biggr).$$
		
		Note that  $w_0$ does not belong to  $L^1(\R)$. Therefore, the
		$L^p$ estimates obtained in the previous chapter for the solutions
		of the linear heat equation with initial data in $L^1$ do not apply to $w$.
		
		As $w_0\in L^\infty(\R)$, applying the Young inequality we obtain
		\begin{equation} \label{eq: 3.6}
			\|w(t)\|_\infty \le \|G(t)\|_1 \|w_0\|_\infty = \|w_0\|_\infty,
			\forall t>0
		\end{equation}
		On the other hand, combining \eqref{eq: 3.4a}--\eqref{eq: 3.4d} with the explicit expression of
		$w$:
		$$w(x,t)=(4\pi t)^{-1/2} \int_{-\infty}^\infty
		\exp\biggl(-\frac{|x-y|^2}{4t}\biggr)w_0(y)\,dy$$
		we easily obtain
		
		\begin{subequations}
			\label{eq: 3.7}
			\begin{align}
				&e^{-|L|}\le w(x,t) \le e^{|L|}, \quad \forall (x,t)\in
						\R\times(0,\infty) \label{eq: 3.7a}\\
				&\lim_{x\to-\infty} w(x,t)=1,\quad \lim_{x\to+\infty}
						w(x,t)=e^M, \quad \forall t>0 \label{eq: 3.7b}
			\end{align}
		\end{subequations}
		Inverting the transformation \eqref{eq: 3.3} we obtain
		\begin{equation} \label{eq: 3.8}
			u(x,t)=\Bigl(\log\bigl(w(x,t)\bigr)\Bigr)_x=
			\frac{w_x(x,t)}{w(x,t)}.
		\end{equation}
		Differentiating the equation \eqref{eq: 3.2} with respect to $x$ we see that $\omega=w_x$ is
		solution to
		\begin{align} \label{eq: 3.9} 
		\begin{dcases}
		\omega_t-\omega_{xx}=0 &\text{ in } \R\times(0,\infty)\\
		\omega(x,0)=\omega_0(x)=w_0(x) u_0(x). 
		\end{dcases}
		\end{align}
		As $w_0\in L^\infty(\R)$ and $u_0\in L^1(\R)$ We see that $\omega_0\in
		L^1(\R)$. Thus, the $L^p$ estimates obtained in Chapter 1 for the solutions to the heat equation with initial data in $L^1(\R)$ can be applied to $\omega$ and we obtain
		\begin{equation} \label{eq: 3.10}
			\|\omega(t)\|_p \le C_p t^{-\op}, \qquad\forall t>0 
		\end{equation}
		for all $p\in[1,\infty]$.
		
		Combining \eqref{eq: 3.7a} and \eqref{eq: 3.10} we deduce that the solution $u=u(x,t)$ to (i)
		satisfies
		\begin{equation} \label{eq: 3.11}
			\|u(t)\|_p\le C'_p t^{-\op},\qquad\forall t>0
		\end{equation}
		for all $p\in[1,\infty]$.
		
		On the other hand, integrating the equation \eqref{eq: 3.1} in $\R$ with respect to $x$ one obtains
				$$\frac{d}{dt}\int_{-\infty}^\infty u(x,t)\,dx=0.$$
		Thus the mass of the solution to \eqref{eq: 3.1} is conserved with time.
		
		The Hopf-Cole transform allowed us to obtain an explicit form for the solutions to the Burgers equation, as well as $L^p (\R)$ estimates which hold that for the heat equation. 
		In the next sections we will see that this transformation also allows obtaining self-similar solutions to \eqref{eq: 3.1}, as well as the asymptotic behavior of the general solution.
		
		\section{Self-similar solutions}
		
		The equation \eqref{eq: 3.1} is invariant under the rescaling
		\begin{equation} \label{eq: 3.12}
			u_\lambda(x,t)=\lambda u(\lambda x,\lambda^2 t).		
		\end{equation}
		
		Therefore, the problem arises of the existence of self-similar solutions that remain invariant, i.e., that satisfy
		\begin{equation} \label{eq: 3.13}
			u_\lambda=u,\qquad \forall \lambda>0.
		\end{equation}
		It can easily be shown  that $u$ satisfies \eqref{eq: 3.2} if and only if 
		\begin{equation} \label{eq: 3.14}
			u(x,t)=t^{-1/2} f\delfrac({x}{\sqrt{t}})   
		\end{equation}
		with $f(x)=u(x,1)$.
		
		Any solution to \eqref{eq: 3.1} of the form \eqref{eq: 3.1} with profile $f\in L^1(\R)$ es
		solution the Cauchy problem:
		\begin{align} \label{eq: 3.15}
		\begin{dcases}
				u_t-u_{xx}=(u^2)_x &\text{ in } \R\times(0,\infty)\\
				u(x,0)= M\delta 
		\end{dcases}
		\end{align}
		with ${M=\int_{-\infty}^\infty f(x)\,dx}$ and $\delta$ being the mass Dirac at the origin.
				
		For every $M\in\R$, the existence and uniqueness of the solution to \eqref{eq: 3.4} can be expected, with the solution being self-similar.
		
		Therefore, for every $M\in\R$ we look for $f_M\in L^1(\R)$ with
		${\int_{-\infty}^\infty f(x)\,dx=M}$ and such that the function
		$u_M(x,t)=t^{-1/2} f_M(x/\sqrt{t})$ satisfies \eqref{eq: 3.4}.
		
		It can easily be shown that  $u_M=t^{-1/2} f_M(x/\sqrt{t})$ satisfies \eqref{eq: 3.4} if and only if	
		\begin{align} \label{eq: 3.16}
		\begin{dcases}
			-f''_M-\frac{xf'_M}{2}-\frac12 f_M=(f^2_M)'&\hbox{en $\R$}\\
			\int_{-\infty}^\infty f_M=M. 
		\end{dcases}
		\end{align}
		This differential equation is easily solved using the Hopf-Cole transform
		We define 
		\begin{equation} \label{eq: 3.17}
			g_M(x)=\exp \biggl(\int_{-\infty}^x f_M(s)\,ds\biggr).
		\end{equation}
		Then \eqref{eq: 3.5} transforms to
		\begin{align} \label{eq: 3.18}
		\begin{dcases}
			-g''_M-\frac{xg'_M}{2}=0 &\text{ in } \R\\
			\lim_{x\to-\infty} g_M(x)=1, \quad \lim_{x\to-\infty} g(x)=e^M.  
		\end{dcases}
		\end{align}
		If we write
		\begin{equation} \label{eq: 3.19}
			g_M(x)=(e^M-1)\int_{-\infty}^x \int_{-\infty}^x h(s)\,ds+1
		\end{equation}
		we see that \eqref{eq: 3.7} is equivalent to
		\begin{align} \label{eq: 3.20}
		\begin{dcases} 
				-h_{xx}-\frac{xh_x}{2}-\frac12 h=0 &\text{ in } \R \\
				\int_{-\infty}^\infty h\,dx=1.
		\end{dcases}
		\end{align}
		Hence, \eqref{eq: 3.19} is the equation for the profile of the self-similar solution of the linear heat equation and thus :
		\begin{equation} \label{eq: 3.21}
			h(x)=(4\pi)^{-1/2} \exp(-x^2/4).
		\end{equation}
		On the other hand, undoing the change \eqref{eq: 3.16} we obtain the
		self-similar solution to \eqref{eq: 3.15}:
		\begin{equation} \label{eq: 3.22}
		f_M(x)=\Bigl(\log\bigl(g_M(x)\bigl)\Bigl)_x = \frac{g'_M(x)}{g_M(x)}
		=\frac{(e^M-1)h(x)}{\displaystyle(e^M-1) \int_{-\infty}^x h(s)\,ds+1}.  
		\end{equation}
		The self-similar solutions to \eqref{eq: 3.1} are thus of the form:
		\begin{equation} \label{eq: 3.23}
			u_M(x,t)=t^{-1/2}\frac{(e^M-1)h(x/\sqrt{t})}{\displaystyle(e^M-1)
				\int_{-\infty}^{x/\sqrt{t}} h(s)\,ds+1}.
		\end{equation}	
The main difference observed in the self-similar profiles of the Burgers equation with respect to the self-similar profiles of the heat equation is its asymmetry with respect to the origin. This is due to the convection term.
		
		When $M$ increases, $f_M(x)$ increases and  when  $M$ goes to infinity $f_M$
		converges to 
		\begin{equation}
			f_\infty(x)=\frac{h(x)}{\displaystyle\int_{-\infty}^x h(s)\,ds}.
		\end{equation}
		The function $f_\infty$ satisfies: $\lim_{x\to\infty} f_\infty (x)=0$ y
		$\lim_{x\to-\infty} f_\infty(x)=\infty$.
		
		The function $u_\infty(x,t)=t^{-1/2} f_\infty(x/\sqrt{t})$ is a self-similar solution of the Burgers equation with profile $f_\infty$ which does not belong to $L^1$.
		
		\section{Asymptotic behavior}
		
		Given $u_0\in L^1(\R)$, we have seen that the Burgers equation \eqref{eq: 3.1} admits a unique solution that we have explicitly obtained by means of the Hopf-Cole transformation.
	On the other hand we have seen  that $t^\op u(t)
		\in L^\infty \bigl(0,\infty; L^p(\R)\bigr)$ for all $p\in[1,\infty]$. Therefore, the problem of the asymptotic behavior of solutions arises or, more precisely, the behavior of $t^\op u(t)$ in
		$L^p(\R)$  when  $t\to\infty$.
		
		The following theorem shows that the asymptotic behavior is given by the self-similar solutions constructed in the previous section.
		
		\begin{thm}
	Let $u_0\in L^1(\R)$ with ${\int_{-\infty}^\infty u_0(x)\,dx=M}$. Let
			$u=u(x,t)$ be the solution to \eqref{eq: 3.1} and $u_M=u_M(x,t)$ the self-similar solution of mass $M$ obtained in \eqref{eq: 3.22}.
Then
			\begin{equation} \label{eq: 3.24}
				t^\op\|u(t)-u_M(t)\|_p\to 0, \quad \text{  when  } t\to\infty
			\end{equation}
			for all $p\in[1,\infty]$.
		\end{thm}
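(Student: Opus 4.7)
The natural strategy is to push everything through the Hopf-Cole transformation and then invoke the asymptotic theory for the heat equation from Chapter 1. Let
\begin{equation*}
w(x,t) = \exp\biggl(\int_{-\infty}^x u(z,t)\,dz\biggr), \qquad W_M(x,t) = \exp\biggl(\int_{-\infty}^x u_M(z,t)\,dz\biggr) = g_M\bigl(x/\sqrt t\bigr)
\end{equation*}
denote the Hopf-Cole transforms of $u$ and $u_M$. Both solve the linear heat equation, and the inverse formula gives $u = w_x/w$, $u_M = W_{M,x}/W_M$, with denominators bounded below by a positive constant depending only on $M$ and $\|u_0\|_1$ by \eqref{eq: 3.7a}. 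Writing
\begin{equation*}
u - u_M = \frac{w_x(W_M-w) + w(w_x-W_{M,x})}{w W_M},
\end{equation*}
it thus suffices to control $\|w_x(t)\|_p$, $\|w_x(t)-W_{M,x}(t)\|_p$ and $\|w(t)-W_M(t)\|_\infty$.

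The crucial observation is that $w_x$ itself solves the heat equation, with initial datum $w_0 u_0 \in L^1(\R)$ of mass $w_0(+\infty) - w_0(-\infty) = e^M-1$; and $W_{M,x}$ solves the heat equation with initial datum $(e^M-1)\delta_0$, so $W_{M,x}(x,t) = (e^M-1) G(x,t)$. Theorem 1.1 applied to $w_x$ therefore yields $t^\op \|w_x(t) - W_{M,x}(t)\|_p \to 0$ for every $p \in [1,\infty]$, while \eqref{eq: 2.6} gives $\|w_x(t)\|_p = O(t^{-\op})$. Since $w(\cdot,t)$ and $W_M(\cdot,t)$ share the limit $1$ at $x = -\infty$ for every $t > 0$, one can recover $w - W_M$ from its derivative,
\begin{equation*}
w(x,t) - W_M(x,t) = \int_{-\infty}^x \bigl(w_y(y,t) - W_{M,y}(y,t)\bigr)\,dy,
\end{equation*}
so $\|w(t)-W_M(t)\|_\infty \le \|w_x(t)-W_{M,x}(t)\|_1 \to 0$ by the previous step with $p=1$. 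Inserting all three decays into the identity above produces $t^\op\|u(t)-u_M(t)\|_p \to 0$.

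The delicate point is the density passage from sufficiently nice data, for which Lemma 1.2 actually supplies the sharper rate $O(t^{-\op-1/2})$ in every bound above (since $w_0 u_0$ inherits a weight-$|x|$ $L^1$-bound from $u_0$, and the Dirac-corrected datum $w_0 u_0 - (e^M-1)\delta_0$ has zero mean), to arbitrary $u_0 \in L^1(\R)$. I would approximate $u_0$ by $u_{0,n} \in L^1(\R;1+|x|)$ with $\int u_{0,n} = M$ (truncate $u_0$ and correct the mass with a compactly supported bump), denote by $u_n$ the corresponding Burgers solutions, and exploit the $L^1$-contraction of the viscous Burgers semigroup together with the $L^\infty$ decay \eqref{eq: 3.11} and the interpolation $\|f\|_p \le \|f\|_1^{1/p}\|f\|_\infty^{1-1/p}$ to obtain $t^\op\|u(t) - u_n(t)\|_p \le C\,\|u_0 - u_{0,n}\|_1^{1/p}$ uniformly in $t > 0$, for $1 \le p < \infty$. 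The endpoint $p = \infty$ requires revisiting Hopf-Cole once more: a direct max-principle estimate on $w - w_n$ combined with an $L^1 \to L^\infty$ heat-smoothing estimate on $w_x - w_{n,x}$ yields $\|u(t)-u_n(t)\|_\infty \le C t^{-1/2}\|u_0 - u_{0,n}\|_1$. The main obstacle is precisely this $p = \infty$ density step: naive interpolation is insufficient, so one has to reopen the Hopf-Cole representation to obtain a uniform error bound that is genuinely small in $n$ once multiplied by $t^\op = t^{1/2}$.
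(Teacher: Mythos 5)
Your first two paragraphs are exactly the paper's proof: the same Hopf--Cole reduction, the same algebraic splitting of the numerator $w_xW_M-wW_{M,x}$, the observation that $w_x$ solves the heat equation with integrable datum $w_0u_0$ of mass $e^M-1$ while $W_{M,x}=(e^M-1)G$, the application of Theorem 1.1, and the recovery of $\|w-W_M\|_\infty$ by integrating the $p=1$ convergence. That part is complete and correct.

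Your final paragraph, however, manufactures an obstacle that does not exist. Theorem 1.1 is stated and proved for arbitrary data in $L^1(\R^N)$ — the density passage from $L^1(\R;1+|x|)$ to $L^1(\R)$ is carried out once and for all \emph{inside} its proof, at the level of the linear heat equation. Since $w_0\in L^\infty(\R)$ (bounded by $e^{\|u_0\|_1}$) and $u_0\in L^1(\R)$, the datum $\omega_0=w_0u_0$ of $w_x$ lies in $L^1(\R)$ for every $u_0\in L^1(\R)$, so Theorem 1.1 applies to $w_x$ directly and your paragraph two already yields the full statement. No approximation of $u_0$ at the Burgers level, no $L^1$-contraction of the nonlinear semigroup, and no separate treatment of $p=\infty$ are needed; the entire density issue has been absorbed into the linear theory before you ever invert the transform. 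The lesson is that the Hopf--Cole change of variables converts the nonlinear asymptotics entirely into a linear statement about $w_x$, and one should let the linear theorem do all the work it was designed to do.
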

		
		\begin{proof}
			As we saw in \eqref{eq: 3.8}, ${u=\frac{w_x}{w}}$ where $w$ is the solution to the heat equation \eqref{eq: 3.2}.
			
	On the other hand $u_M$ is of the form
			${u_M=\frac{w_{M,x}}{w_M}}$ with 
			\begin{equation*} 
			w_M(x,t)=g_M(x/\sqrt{t})=(e^M-1)\int_{-\infty}^{x/\sqrt{t}} h(s)\,ds+1,
			\end{equation*}
			$h$ being the gaussian defined in \eqref{eq: 3.20}.
			
			Thanks to\eqref{eq: 3.7} we know that
			\begin{equation} \label{eq: 3.25}
				e^{-|L|}\le w(x,t)\le e^{|L|},\qquad \forall (x,t)\in
				\R\times(0,\infty) 
			\end{equation}
			and on the other hand, from the expression of $w_M$ we can deduce
			\begin{equation} \label{eq: 3.26}
			e^{-M^-} \le w_M(x,t) \le e^{M^+} 
			\end{equation}
			with $M^+=\max(M,0)$ and $M^- = -\min(M,0)$.
			
			Combining the forms of $u$ and $u_x$ with \eqref{eq: 3.25}--\eqref{eq: 3.26} we see that \eqref{eq: 3.24} is equivalent to
			\begin{equation} \label{eq: 3.27}
				t^\op \|w_x(t) w_M(t)-w(t) w_{M,x} (t)\|_p\to 0 \quad \text{  when  } t\to\infty
			\end{equation}
			which is the same as:
			\begin{align*}
				{t^\op \bigl\|\bigl(w_x(t)-w_{M,x} (t)\bigr) w_M(t)-\bigl(w(t) -
					w_M(t)\bigr) w_{M,x} (t)\bigr\|_p\to 0} \quad {\hbox{ }}
			\end{align*}
			when  $t\to\infty$.
			Thanks to \eqref{eq: 3.26} we know that $w_M \in
			L^\infty\bigl(\R\times(0,\infty)\bigr)$. On the other hand
			$$w_{M,x}=t^{-1/2} (e^M-1) h\delfrac({x}{\sqrt{t}})$$
			satisfies $t^\op w_{M,x} (t) \in L^\infty\bigl(0,\infty; L^p(\R)\bigr)$.
			
			It is therefore enough to prove that
			\begin{align} \label{eq: 3.28}
					t^\op \|w_x(t)-w_{M,x}(t)\|_p &\to0\\
					\|w(t)-w_M(t)\|_\infty &\to  0.
			\end{align}
			
			We see that $w_x$ is solution to the linear heat equation with
			mass $e^M-1$ as, for  \eqref{eq: 3.7b}, we have
			$$\int_{-\infty}^\infty w_x(x,t)\,dx=\lim_{x\to+\infty} w(x,t) -
			\lim_{x\to-\infty} w(x,t)=e^M-1.$$
			
			On the other hand, as $w_{M,x}$ is the fundamental solution of the heat equation with mass $e^M-1$, \eqref{eq: 3.27} is a consequence of Theorem 1.1.
			
			On the other hand,
			$$w(x,t)-w_M(x,t)=\int_{-\infty}^x \bigl(w_x(s,t) - w_{M,x}
			(s,t)\bigr)\,ds$$
			and
			$$\lim_{x\to-\infty} w(x,t)=\lim_{x\to-\infty} w_M(x,t)=1.$$
			Thus  \eqref{eq: 3.28} is a consequence of \eqref{eq: 3.27} for $p=1$.
		\end{proof}
		
		\begin{obs}
			The equation
			\begin{equation} \label{eq: 3.30}
				u_t-u_{xx} = a(u^2)_x \quad \text{ in } \R\times(0,\infty)
			\end{equation}
			with $a\ne 1$ can easily be reduced to the case $a=1$.
			
			Indeed, if $u$ is solution to \eqref{eq: 3.30} with initial data $u_0$, $v=au$ is solution to			\begin{equation} \label{eq: 3.31}
				v_t-v_{xx} = (v^2)_x \quad  \text{ in } \R\times(0,\infty)
			\end{equation}
			con initial data $au_0$.
			
			This allows us to calculate the self-similar solutions to \eqref{eq: 3.30}. Let
			$f_{M,a}$ be the self-similar profile of the equation \eqref{eq: 3.30} with mass $M$. 
		It holds $f_{M,a}=(1/a) f_{aM}$ where $f_{aM}$ is the self-similar profile of mass $aM$ of the equation \eqref{eq: 3.31}.
			
			By virtue of \eqref{eq: 3.22} we have 
			\begin{equation*}
				f_{M,a}(x)=\frac{(e^{aM}-1) h(x)}{\displaystyle a\biggl[(e^{aM}-1)
					\int_{-\infty}^x h(s)\,ds+1\biggr]}.
			\end{equation*}
		\end{obs}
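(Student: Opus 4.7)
The plan is to verify the substitution $v=au$ reduces the $a\neq 1$ problem to the normalized one, and then push this identification through the self-similar ansatz to read off $f_{M,a}$ from the formula \eqref{eq: 3.22} already proved for $a=1$.

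First I would compute: if $u$ satisfies \eqref{eq: 3.30}, then $v=au$ satisfies $v_t - v_{xx} = a u_t - a u_{xx} = a^2 (u^2)_x = ((au)^2)_x = (v^2)_x$, giving \eqref{eq: 3.31}. Moreover $v(x,0)=au_0(x)$, so if $\int u_0 \,dx = M$ then $v$ has mass $aM$. This is a direct one-line verification; the only thing to check is the algebraic identity $a \cdot a (u^2)_x = (a^2 u^2)_x = (v^2)_x$, which uses that $a$ is a constant.

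Next I would apply the self-similar ansatz. Writing the self-similar profile of \eqref{eq: 3.30} with mass $M$ as $f_{M,a}$, so that $u_{M,a}(x,t)=t^{-1/2} f_{M,a}(x/\sqrt{t})$ solves \eqref{eq: 3.30} with $\int f_{M,a}=M$, the substitution $v=au$ produces the self-similar solution of \eqref{eq: 3.31} whose profile has mass $aM$. By the uniqueness of self-similar profiles for the $a=1$ Burgers equation (embodied in formula \eqref{eq: 3.22}), this profile must coincide with $f_{aM}$. Therefore $a f_{M,a} = f_{aM}$, i.e.\ $f_{M,a} = (1/a) f_{aM}$.

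Finally I would substitute $M \mapsto aM$ into the explicit expression \eqref{eq: 3.22} for $f_{aM}$ and divide by $a$, obtaining
\begin{equation*}
f_{M,a}(x) = \frac{1}{a} \cdot \frac{(e^{aM}-1)h(x)}{(e^{aM}-1)\int_{-\infty}^x h(s)\,ds + 1},
\end{equation*}
which is the claimed formula. There is no real obstacle: the argument is a change-of-variables computation combined with invocation of \eqref{eq: 3.22}. The only mild care point is keeping track of the mass rescaling $M \mapsto aM$ induced by the pointwise rescaling $u \mapsto au$, so that the correct instance of formula \eqref{eq: 3.22} is applied.
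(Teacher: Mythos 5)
Your proposal is correct and follows exactly the paper's own reasoning: verify that $v=au$ transforms \eqref{eq: 3.30} into \eqref{eq: 3.31} with mass rescaled to $aM$, identify $af_{M,a}=f_{aM}$ by uniqueness of the normalized profile, and substitute into \eqref{eq: 3.22}. Nothing further to add.
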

		
		\chapter{The heat equation in similarity variables}
		
		\section{Motivation: the self-similar variables}
		
		As we saw in the introduction, the convection-diffusion equation
		$$u_t-\Delta u=a\cdot\nabla(|u|^{q-1}u) \quad \text{ in } \R^N\times(0,\infty). $$
		can only admit self-similar solutions with profile in $L^1(\R^N)$ si
		${q=1+\frac{1}{N}}$. In this case we obtain the equation
		\begin{equation} \label{eq: 4.1}
			u_t-\Delta u=a\cdot\nabla(|u|^{1/N}u) \quad \text{ in } \R^N\times(0,\infty)
		\end{equation}
	and the self-similar solutions are of the form
		\begin{equation} \label{eq: 4.2}
			u(x,t)= t^{-N/2}f\biggl(\frac{x}{\sqrt{t}}\biggr) 
		\end{equation}
		with a profile $f$ that satisfies the elliptic equation
		\begin{equation} \label{eq: 4.3}
			-\Delta f-\frac{x\cdot\nabla f}{2}-\frac{N}{2} f=a\cdot\nabla
			(|f|^{1/N}f) \quad \text{ in } \R^N.
		\end{equation}

In the study of self-similar solutions and asymptotic behavior, it is convenient to introduce new space-time variables so that the self-similar profiles become stationary solutions of the new evolution equation, and so that the self-similar asymptotic behavior of the general solution is equivalent to the convergence of the new trajectories to the self-similar profiles.

This point of view has been adopted by various authors in the study of different problems. Let us cite, among others, the works of M. ~ Escobedo and O. ~ Kavian \cite{EK1, EK2} on the asymptotic behavior of parabolic equations with terms of absorption and the work of O. ~ Kavian \cite{K} on the heat equation with a reaction term.
		
		In the case of the equation \eqref{eq: 4.1} that concerns us, the similarity variables are
		\begin{equation} \label{eq: 4.4}
			y=\frac{x}{\sqrt{t+1}} \quad	s=\log(t+1).
		\end{equation}
		If $u$ is solution to \eqref{eq: 4.1} with initial data $u(0)=u_0$, then
		\begin{equation} \label{eq: 4.5}
		v(y,s)=e^{Ns/2}u(e^{s/2}y,e^s-1)
		\end{equation}
		is solution to
		\begin{equation} \label{eq: 4.6}
			\begin{cases}
				v_s-\Delta v-\frac{y\cdot\nabla v}{2}-\frac{N}{2}=a\cdot\nabla(|v|^{1/N} v) &\hbox{en $\R^N\times(0,\infty)$}\\
			 v(y,0)=u_0(y).
			\end{cases}
		\end{equation}
		In \eqref{eq: 4.6}, $\nabla$ y
		$\Delta$ respectively represent the gradient and the Laplacian in the new spatial variable $y$.
		
		Reciprocally, if $v=v(y,s)$ is solution to \eqref{eq: 4.6}, then
		\begin{equation} \label{eq: 4.7}
			u(x,t)=(t+1)^{-\frac{N}{2}}v\bigl(x/\sqrt{t+1},\log(t+1)\bigr)
		\end{equation}
		es solution to \eqref{eq: 4.1} y
		\begin{equation} \label{eq: 4.8}
		\biggl\|u(x,t)-t^{-N/2}f\delfrac({x}{\sqrt{t}})\biggr\|_p\to 0 \quad \text{  when  }
		t\to\infty
		\end{equation}
		if and only if
		\begin{equation} \label{eq: 4.9}
			v(s)\to f \quad \text{ in } L^p(\R^N) \quad \text{  when  } s\to\infty.
		\end{equation}
		Thus, the existence of self-similar solutions for \eqref{eq: 4.1}
		and the asymptotic self-similar behavior of its solutions transform into, respectively, the existence of stationary solutions for \eqref{eq: 4.6} and the convergence of the latter's trajectories to equilibrium points.
		
		The equation \eqref{eq: 4.6} is of parabolic type and the elliptic operator involved is
		\begin{equation} \label{eq: 4.10}
			Lv:=-\Delta-\frac{y\cdot\nabla v}{2}.
		\end{equation}
		We observe that
		\begin{equation} \label{eq: 4.11}
			Lv=-\frac{1}{K(y)}\div\bigl(K(y)\nabla v(y)\bigr)
		\end{equation}
		where
		\begin{equation} \label{eq: 4.12}
				K(y)=\exp\delfrac({|y|^2}{4}).
		\end{equation}
		We see therefore that $L$ is symmetric with respect to the scalar product
		$$(v,w)_K=\int v(y)w(y)K(y)\,dy$$
		es decir, it holds
		$$(Lv,w)_K=(v,Lw)_K.$$
		therefore it is natural to introduce the weighted Sobolev spaces
		\begin{align*} 
			L^2(K)&=\biggl\{\,v\in L^2(\R^N):\int|v(y)|^2K(y)\,dy<\infty\,\biggr\}, \\
			H^m(K)&=\{\,v\in H^m(\R^N):D^\alpha v\in L^2(K),\forall\alpha
			\in{\mathbf{N}}^N:|\alpha|\le m\,\},\qquad m\in{\N} \nonumber
		\end{align*}
		endowed with the norms
		\begin{align*} 
			\|v\|^2_{m,K}&=\sum_{|\alpha|\le m}\|D^{\alpha}v\|^2_K, \qquad m=1,2\ldots \nonumber\\
			\|v\|^2_K&=\int |v(y)|^2 K(y)\,dy.
		\end{align*}
		These are Hilbert spaces.

		\section{Analysis in weighted Sobolev spaces}
		
		In this part we prove the fundamental properties of the operator $L$ in the Sobolev spaces $H^m(K)$. These results were shown by M.~Escobedo and O.~Kavian \cite{EK1} and O.~Kavian \cite{K}.
		
		\begin{prop}[\cite{EK1}]
			\begin{enumerate}
				\item[(i)]
				It holds
				\begin{equation} \label{eq: 4.13}
				\int|v(y)|^2|y|^2K(y)\,dy\le 16\int |\nabla v(y)|^2 K(y)\,dy, 
				\qquad\forall v\in H^1(K).
				\end{equation}
				
				\item[(ii)]
				The inclusion $H^1(K)\subset L^2(K)$ is compact.
				
				\item[(iii)]
				$L:H^1(K)\to \bigl(H^1(K)\bigr)^*$
				is an isomorphism.
				
				\item[(iv)]
				$L^{-1}:L^2(K)\to L^2(K)$ is compact.
				
				\item[(v)] The eigenvalues of $L$ are positive real numbers.
				\begin{equation} \label{eq: 4.14}
				\lambda_k=\frac{N+k-1}{2},\qquad k=1,2,\ldots  
				\end{equation}
				
				The corresponding eigenspaces are				\begin{equation} \label{eq: 4.15}
					N(L-\lambda_kI)=\Span\{\,D^\beta\varphi_1:|\beta|=k-1\,\} 
				\end{equation}
				with ${\varphi_1(y)=K^{-1}(y)=\exp\biggl(-\frac{|y|^2}{4}\biggr)}$.
				
				\item[(vi)]
				For all $\epsilon>0$ and $q\ge1$ there exists $R>0$ and a constant
				$C=C(R,q,\epsilon)>0$ such that
				\begin{equation} \label{eq: 4.16}
				\|v\|^2_K\le\epsilon \|v\|^2_{1,K}+C\|v\|^2_{L^q(B_R)},
				\qquad\forall v\in H^1(K)\cap L^q_{\rm loc}(\R^N). 
				\end{equation}
			\end{enumerate}
		\end{prop}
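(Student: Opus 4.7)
My plan is to treat the six parts sequentially, as each leans on the preceding ones.

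For part (i), the key identity is $\nabla K = (y/2) K$, whence $|y|^2 K = 2 y \cdot \nabla K$. Plugging this into $\int |v|^2 |y|^2 K \, dy$ and integrating by parts (first for $v \in C^\infty_c(\R^N)$, then by density) gives $\int |v|^2 |y|^2 K = -2N \int |v|^2 K - 4 \int v \, (y \cdot \nabla v) K$. Cauchy--Schwarz bounds the last term by $4 A B$ with $A^2 = \int |v|^2 |y|^2 K$ and $B^2 = \int |\nabla v|^2 K$; dropping the nonpositive term $-2N \|v\|_K^2$ and solving the quadratic inequality $A^2 \leq 4 A B$ yields $A \leq 4 B$, i.e., the constant $16$.

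Part (ii) follows from a tail-plus-interior split: on a ball $B_R$ the weight $K$ is bounded by $e^{R^2/4}$, so Rellich gives compactness from $H^1(B_R)$ into $L^2(B_R)$; on the complement, $\int_{|y|>R} |v|^2 K \leq R^{-2} \int |v|^2 |y|^2 K \leq 16 R^{-2} \|\nabla v\|_K^2$ by (i). A diagonal extraction together with $R \to \infty$ produces a Cauchy subsequence in $L^2(K)$. Part (iii) is Lax--Milgram applied to the bilinear form $a(v,w) = \int K \nabla v \cdot \nabla w \, dy$, which is associated with $L$ via the identity $Lv = -K^{-1} \div (K \nabla v)$; coercivity reduces to a weighted Poincar\'e inequality $\|v\|_K^2 \lesssim \|\nabla v\|_K^2$, which I would assemble from (i) applied on $\{|y| > R\}$ together with a local Poincar\'e argument on $B_R$. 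Part (iv) is then automatic: $L^{-1}$ sends $L^2(K) \hookrightarrow (H^1(K))^*$ boundedly into $H^1(K)$ by (iii), and (ii) supplies the compact embedding back into $L^2(K)$.

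For (v), $L$ is symmetric with respect to $(\cdot,\cdot)_K$ and $L^{-1}$ is compact, so Hilbert--Schmidt theory provides a discrete positive spectrum. A direct computation shows $L\varphi_1 = (N/2)\varphi_1$ for $\varphi_1(y) = e^{-|y|^2/4}$. The commutator identity $[L, \partial_i] = \tfrac{1}{2} \partial_i$, derived by differentiating $Lv = -\Delta v - \tfrac{1}{2} y \cdot \nabla v$, promotes any eigenfunction at level $\lambda$ to one at $\lambda + 1/2$ via $\partial_i$; iterating, $D^\beta \varphi_1$ is an eigenfunction at level $(N + |\beta|)/2$. That these exhaust the spectrum follows from the classical completeness of Hermite polynomials in $L^2(K)$. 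For (vi), I would split $\|v\|_K^2$ at radius $R$: bound the exterior by $16 R^{-2} \|\nabla v\|_K^2$ via (i), bound the interior $\int_{B_R} |v|^2 K \leq e^{R^2/4} \|v\|_{L^2(B_R)}^2$, and convert $L^2(B_R)$ into $L^q(B_R)$ via H\"older (for $q \geq 2$) or Sobolev embedding combined with H\"older (for $1 \leq q < 2$); choosing $R$ with $16/R^2 < \epsilon$ closes the estimate. The hardest step will be the coercivity input behind (iii): inequality (i) controls $\int |v|^2 |y|^2 K$ rather than $\int |v|^2 K$ directly, so the Poincar\'e-type bound needed for Lax--Milgram must be pieced together from (i) on the exterior with a local argument on a ball. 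Completeness of $\{D^\beta \varphi_1\}$ in $L^2(K)$, while classical, is the deepest external input behind (v).
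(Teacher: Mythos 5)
Your proof is correct, and parts (ii), (iv) and (vi) coincide with the paper's own argument (split at radius $R$, Rellich on the ball, tail controlled by (i), H\"older or interpolation to pass from $L^2(B_R)$ to $L^q(B_R)$). For (i) you integrate $|y|^2K=2\,y\cdot\nabla K$ by parts directly, whereas the paper substitutes $w=K^{1/2}v$ and expands $\int\bigl|\nabla w-\tfrac{y}{4}w\bigr|^2$; these are the same computation in different clothing and both yield the constant $16$. The genuine divergences are in (iii) and (v). For (iii) the paper gets the weighted Poincar\'e inequality for free from (ii): the Rayleigh quotient $\inf \|\nabla v\|_K^2/\|v\|_K^2$ is attained by compactness and cannot vanish because nonzero constants do not belong to $L^2(K)$ ($K=e^{|y|^2/4}$ is not integrable). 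Your plan --- (i) outside $B_R$ plus a local Poincar\'e estimate inside --- also works, but as stated it leaves the mean value of $v$ over $B_R$ uncontrolled; you would need, say, a Poincar\'e inequality on $B_{2R}$ with a boundary-layer term on $B_{2R}\setminus B_R$, where the unweighted $L^2$ norm is at most $e^{-R^2/4}$ times the weighted one and hence already dominated by $\|\nabla v\|_K$. Since you prove (ii) before (iii), the paper's shortcut is available to you and is cleaner. For (v) the paper Fourier-transforms the eigenvalue equation, reduces it to Euler's homogeneity equation $z\cdot\nabla w=(2\lambda-N)w$ for $w=e^{|z|^2}\hat v$, and reads off both the quantization $2\lambda-N\in\N$ and the eigenfunctions $\hat v=e^{-|z|^2}P_{k-1}(z)$ from the smoothness of $\hat v$; you instead use the raising identity $[L,\partial_i]=\tfrac12\partial_i$ (which is correct) together with completeness of the Hermite system in $L^2(K)$. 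Both are sound: the Fourier route proves exhaustion of the spectrum and the exact description of each eigenspace in one stroke without invoking Hermite completeness as an external input, while your commutator argument is more structural and explains directly why differentiation shifts the eigenvalue by $1/2$.
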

		
		\begin{proof}
			\begin{enumerate}
				\item[(i)]
				Given $v\in H^1(K)$ set
				$w=K^{1/2}v$. It holds ${\nabla w-\frac{y}{4}
					w=K^{1/2}\nabla v}$. As $K^{1/2}\nabla v\in
				\bigl(L^2(\R^N)\bigr)^N$ We have ${\nabla w- \frac{y}{4} w\in
					\bigl(L^2(\R^N)\bigr)^N}$. 
				Besides,
				\begin{align} \label{eq: 4.17}
					\int\biggl|\nabla w-\frac{y}{4} w\biggr|^2&=\int\biggl(|\nabla w|^2+
					\frac{|y|^2}{16}|w|^2-w \frac{y\cdot\nabla w}{2}\biggr)\\
					&=\int\biggl(|\nabla w|^2+\frac{|y|^2}{16}|w|^2+\frac{N}{4}
					|w|^2\biggr). \nonumber 
				\end{align}
				In particular $\int|y|^2|w|^2=\int|y|^2|v|^2K$, and therefore:
				$$\int |y|^2|v(y)|^2K(y)\,dy \le 16\int |\nabla v(y)|^2K(y)\,dy.$$
				
				\item[(ii)] Let $v_n\in H^1(K)$ be such that $v_n\to v$ weakly in $H^1(K)$
				and we see that $\|v_n-v\|_K\to 0$. By virtue of \eqref{eq: 4.13} it holds
				\begin{align} \label{eq: 4.18}
					\|v_n-v\|^2_K&\le\exp\delfrac({R^2}{4})\|v_n-v\|^2_{L^2(B_R)}
					+\frac{1}{R^2}\int|y|^2|v(y)|^2 K(y)\\
					&\le\exp\delfrac({R^2}{4})\|v_n-v\|^2_{L^2(B_R)}
					+\frac{16}{R^2}\|v_n-v\|^2_{1,K}.\nonumber 
				\end{align}
				
				Given $\epsilon>0$, since $\{v_n-v\}$ is bounded in $H^1(K)$, for $R=R_0$
				sufficiently large  it holds
				\begin{equation} \label{eq: 4.19}
				\frac{16}{R^2_0}\|v_n-v\|^2_{1,K}\le\frac{\epsilon}{2},
				\qquad\forall n\ge 1.  
				\end{equation}
				On the other hand, since the inclusion $H^1(B_{R_0})\subset L^2(B_{R_0})$
				is compact, it holds
				\begin{equation} \label{eq: 4.20}
				\|v_n-v\|_{L^2(B_{R_0})}\le\frac{\epsilon}{2}, \text{ for all } n\ge
				n_0
				\end{equation}
				if $n_0$ is sufficiently large.
			
				Combining \eqref{eq: 4.18} with $R=R_0$, \eqref{eq: 4.19} and \eqref{eq: 4.20} we deduce  that $\|v_n-v\|_K\to 0$
				 when  $n\to\infty$.
				
				\item[(iii)]
				From the compactness of the injection $H^1(K)\subset L^2(K)$ we deduce  that 
				$$\lambda_1=\min_{\{  v\in H^1(K) \\ v\ne0 \}}
				\left\{\frac{\displaystyle\int |\nabla v|^2K}{\displaystyle\int
					|v|^2K}\right\}>0$$
				and therefore $\|\nabla v\|_K$ and $\|v\|_{1,K}$ are equivalent norms in $H^1(K)$.
				
				As the bilinear form
				$$a(v,w)=(Lv,w)_K= (v,Lw)_K=\int\nabla v\nabla wK$$
				is coercive in $H^1(K)$, by the Lax-Milgram Theorem we deduce that
				$L:H^1(K)\to \bigl(H^1(K)\bigr)^*$ is an isomorphism.
				
				\item[(iv)]
				We firstly note that
				$$D(L)=\{\,v\in L^2(K): Lv\in L^2(K)\,\}=H^2(K).$$
				As
				$$(Lv,v)_K=\int|\nabla v|^2K$$
				we deduce that $D(L)\subset H^1(K)$.
				
			Let $v\in D(L)$ be such that $Lv=f\in L^2(K)$ and define $w=K^{1/2}v$. As
				$v\in H^1(K)$ it holds $w\in H^1(\R^N)$ and also:
				$$-\Delta w+\biggl(\frac{N}{4}+\frac{|y|^2}{16}\biggr)w=K^{1/2}
				f\in L^2(\R^N).$$
				On the other hand
				\begin{align*}
					&\int\biggl|-\Delta w+\biggl(\frac{N}{4} +
						\frac{|y|^2}{16}\biggr) w\biggr|^2\\
					&=\int\biggl(|\Delta w|^2 + \frac{N^2}{16}
						w^2+\frac{|y|^4w^2}{256}+\frac{N|y|^2}{32} w^2+\frac{N}{2} |\nabla w|^2
						+\frac{|y|^2}{8}|\nabla w|^2-\frac{N}{8}|w|^2\biggr)
				\end{align*}
				from where we deduce  that $\Delta w\in L^2(\R^N)$ and therefore
				\begin{equation} \label{eq: 4.21}
					w\in H^2 (\R^N).
				\end{equation}
				Besides, $|y|^2 w$, $|y||\nabla w|\in L^2(\R^N)$, which implies
				\begin{equation} \label{eq: 4.22}
					|y|^2v\in L^2(K),\qquad |y||\nabla v|\in L^2(K).
	\end{equation}
				
				Combining \eqref{eq: 4.21}--\eqref{eq: 4.22} it can be shown that $\partial_{ij}^2v\in
				L^2(K)$,$\forall i,j \in \{1,\ldots,N\}$ which ensures that $v\in
				H^2(K)$. It can easily be shown that  $L$ is selfadjoint. In the part (iii)
				we saw that $L$ is positive.
				
				On the other hand, $L^{-1}:\bigl(H^1(K)\bigr)^*\to H^1(K)$ is continuous and
				since the injections $\bigl(H^1(K)\bigr)^* \subset L^2(K)$ and
				$H^1(K)\subset L^2(K)$ are compact, we deduce that $L^{-1}:L^2(K)\to
				L^2(K)$ is compact.
				
				\item[(v)] As seen in the previous parts, $L$ has a non-decreasing sequence of positive eigenvalues$\{\lambda_k\}_{k\ge 1}$ that goes to infinity  when  $k\to\infty$.
				
				Let $\lambda>0$ and $v\in L^2(K)$ be such that
				\begin{equation} \label{eq: 4.23}
					Lv=\lambda v.
				\end{equation}
				As $|y|^m v\in L^2(\R^N)$ for all $m\in \N$, we see that $\hat v$ -- the Fourier transform of $v$ -- satisfies $\hat v\in H^m(\R^N)$ for all
				$m\in \N$ and therefore $\hat v\in C^\infty(\R^N)$.
				
	Applying the Fourier transform in the identity \eqref{eq: 4.23} and applying the formula ${\widehat{(y\nabla v)}(z) = -\hat v(z) 
					+\frac12 z\cdot\nabla\hat v(z)}$ we obtain
				$$|z|^2 \hat v(z)+\frac{N}{2} \hat v(z) +\frac12 z\cdot\nabla\hat v(z)
				=\lambda\hat v(z)$$
				and therefore, $w=e^{|z|^2}\hat v$ satisfies
				\begin{equation} \label{eq: 4.24}
					z\cdot\nabla w(z)=(2\lambda-N) w(z). 
				\end{equation}
				It is the Euler's equation that is satisfied by the homogeneous functions of degree $2\lambda-N$. As $w\in C^\infty(\R^N)$ necessarily
				$2\lambda-N\in\N$. So we obtain the eigenvalues
				$$\lambda_k=\frac{N+k-1}{2},\qquad k=1,2,3,\ldots$$
				
				The functions $w_k$ solution to \eqref{eq: 4.24} for $\lambda=\lambda_k$ are 
				$$w_k(z)=P_{k-1}(z)$$
				where $P_{k-1}$ is a homogeneous polynomial of degree $k-1$ arbitrary,
				i.e., $P_{k-1}(z)=z^\beta=z^{\beta_1}\cdots z_N^{\beta_N}$ for some
				$\beta=(\beta_1,\ldots,\beta_N)\in\N^N$ such that
				$|\beta|=\beta_1+\ldots+\beta_N=k-1$. therefore
				$$\hat v_k(z)=e^{-|z|^2}P_{k-1}(z)$$
				and applying the inverse Fourier transform we obtain
				$$v_k(y)=P_{k-1}(D)[e^{-|y|^2/4}]=D^\beta[e^{-|y|^2/4}]$$
				with $|\beta|=k-1$.
				
				\item[(vi)]
				Thanks to\eqref{eq: 4.13} we have
				\begin{align*}
					\int|v(y)|^2K(y) &\le\exp\delfrac({R^2}{4}) \int_{B_R} |v(y)|^2
					+\frac{1}{R^2}\int_{B_R^c} |v(y)|^2 K(y)\\
					&\le\exp\delfrac({R^2}{4}) \int_{B_R} |v(y)|^2
					+\frac{16}{R^2}\int|\nabla v(y)|^2 K(y).
				\end{align*}
				
				Le t$R>0$ be such that ${\frac{16}{R^2}=\epsilon}$, So,
				\begin{equation*} 
				\|v\|_K^2\le\epsilon\|\nabla v\|^2_K + e^{R^2/4}\|v\|_{L^2(B_R)}^2
				\end{equation*}
				and as
				$$\|v\|_{L^2(B_R)}^2 \le C(R,q)\|v\|_{L^q(B_R)}^2$$
				for all $q\ge2$, we deduce  \eqref{eq: 4.16}  when $q\ge 2$.
				
				if $1\le q<2$, simply note that
				$$\|v\|_{L^2(B_R)}\le C\|v\|_{L^q(B_R)}^\alpha
				\|v\|_{H^1(B_R)}^{1-\alpha}$$
				for some $0<\alpha<1$. Therefore,
				\begin{equation*} 
					\|v\|_{L^2(B_R)}^2\le \epsilon\|v\|_{H^1(K)}^2 + C(\epsilon)
					\|v\|_{L^q(B_R)}^2.
				\end{equation*}
				We deduce \eqref{eq: 4.16} for all $1\le q<2$.
			\end{enumerate}
		\end{proof}
		
		\section{The heat equation in similarity variables}
		
		The linear heat equation
		\begin{equation} \label{eq: 4.25}
			\begin{cases}
			u_t-\Delta u= 0 &\hbox{en $\R^N\times(0,\infty)$}\\
			u(x,0)= u_0(x)&\hbox{en $\R^N$}
			\end{cases}
		\end{equation}
		with the change of variables \eqref{eq: 4.5} transformes into
		\begin{equation} \label{eq: 4.26}
			\begin{dcases}
			v_s-\Delta v-\frac{y\cdot\nabla v}{2}-\frac{N}{2}v=0  &\hbox{en
				$\R^N\times(0,\infty)$}\\
			v(y,0)=u_0(y)
			\end{dcases}
		\end{equation}
		The parabolic operator of this system		$$\frac{\partial}{\partial s}+L-\frac{N}{2} I$$
		coincides with the linear part of the nonlinear equation \eqref{eq: 4.6}.
		
		It is thence interesting to study the asymptotic behavior of the solutions to \eqref{eq: 4.26}.
		
		Let $\{\varphi_l\}_{l\ge 1}$ be an orthonormal basis of $L^2(K)$ consisting of eigenfunctions of $L$. Every eigenfunction $\varphi_l$
		corresponds to the eigenvalue $\mu_l$ of the operator $L$, such that
		$\{\mu_l\}_{l\ge 1}$ is a non-decreasing sequence of eigenvalues in which each eigenvalue ${\frac{N+k-1}{2}}$ appears as many times as its multiplicity indicates. In particular
		\begin{equation} \label{eq: 4.27}
			\mu_1=\frac{N}{2},\qquad \mu_2=\frac{N+1}{2} 
		\end{equation}
		and the first eigenfunction $\varphi_1$ is
		\begin{equation} \label{eq: 4.28}
			\varphi_1(y)=(4\pi)^{-N/4}\exp(-|y|^2/4).
		\end{equation}
		Let $u_0\in L^2(K)$. Let $\{\alpha_l\}_{l\ge 1}$ be its Fourier coefficients	
		\begin{equation} \label{eq: 4.29}
			\alpha_l=(u_0,\varphi_l)_K=\int u_0(y)\varphi_l(y)K(y)\,dy
		\end{equation}
		so that 
		\begin{equation} \label{eq: 4.30}
			u_0=\sum_{l\ge1} \alpha_l \varphi_l.
		\end{equation}
		
		We observe that
		\begin{equation} \label{eq: 4.31}
		\alpha_1=(u_0,\varphi_1)_K=(4\pi)^{-N/4} \int u_0(y)\,dy.
		\end{equation}
		
		The solution $v=v(y,s)$ to \eqref{eq: 4.26} is given by the formula
		\begin{align} 
			v(y,s)&=\sum_{l\ge1} e^{-(\mu_l-N/2)s}\alpha_l\varphi_l(y)
			=\alpha_1\varphi_1(y) +\tilde v(y,s) \label{eq: 4.32}\\
			\text{ with } \nonumber \\
			\tilde v(y,s)&=\sum_{l\ge2} e^{-(\mu_l-N/2)s}\alpha_l\varphi_l(y)  \label{eq: 4.33}
		\end{align}
		Taking into account that if $\psi=\sum_{l\ge1}\beta_l\varphi_l$,
		\begin{equation} \label{eq: 4.34}
			\|\psi\|_{m,K}^2=\sum_{l\ge1}(\mu_l)^m \beta_l^2
		\end{equation}
		from \eqref{eq: 4.32} we deduce that
		\begin{equation} \label{eq: 4.35}
		v\in C\bigl([0,\infty); L^2(K)\bigr)\cap C^\infty \bigl((0,\infty);
		H^m(K)\bigr) 
		\end{equation}
		for all $m\in\N$. (This is the usual regularizing effect in semigroups generated by selfadjoint operators.)
		
		In the decomposition  \eqref{eq: 4.32} also observe that the projection of the solution $v(s)$ on the first eigenspace $E_1=\Span\{\varphi\}$ does not depend on $s$, i.e.,
		\begin{equation} \label{eq: 4.36}
			\int v(y,s)\,dy=\int u_0(y)\,dy,\qquad\forall s\ge 0.
		\end{equation}
		Hence, the mass of the solution is conserved.
		
		On the other hand
		$$\mu_l-\frac{N}{2} \ge \frac12, \qquad\forall l\ge 2$$
		and therefore for the component $\tilde v(y,s)$ we obtain an exponential decay:
		$$\|\tilde v(s)\|_K\le e^{-s/2} \|\tilde u_0\|_K,\qquad \forall s\ge 0$$
	 $\tilde u_0$ being the projection of $u_0$ on $E_1^\perp$:
		$$\tilde u_0(y)=\sum_{l\ge2} \alpha_l\varphi_l(y).$$
		
		Therefore,
		\begin{equation} \label{eq: 4.37}
			\|v(s)-\alpha_1\varphi_1\|_K \le e^{-s/2}\|\tilde u_0\|_K, \qquad
			\forall s\ge1
		\end{equation}
		for all $m\in\N$.
		
		It is also easily checked that
		\begin{equation} \label{eq: 4.38}
			\| v(s)-\alpha_1 \varphi_1\|_{m, K} \leq C_m e^{-s/2} \| \tilde u_0\|_K, \qquad s \geq 1
		\end{equation}
		for all $m \in \N$.
		
		The operator \eqn{L-\frac{N}{2}I} generates an analytic semigroup of contractions in $L^2(K)$ that we henceforth denote by $\{S(s)\}_{s\ge0}$
		which is such that
		\begin{equation} \label{eq: 4.39}
		S(s) u_0=v(s),\qquad\forall u_0\in L^2(K),\forall s>0
		\end{equation}
		 $v=v(s)$ being the solution to \eqref{eq: 4.26}.
		
		The following result then holds.
		
		\begin{thm}
			\begin{enumerate}
				\item[(i)]
				The linear parabolic equation \eqref{eq: 4.26} admits a unique solution
				$v(s)=S(s)u_0\in C\bigl([0,\infty); L^2(K)\bigr)$ for every
				initial data $u_0\in L^2(K)$. This solution satisfies
				$$v\in C^\infty\bigl((0,\infty);H^m(K)\bigr),\qquad\forall m\in\N.$$
				
				\item[(ii)]
				The projection of the solution $v$ on the first eigenspace $E_1$ remains invariant				\begin{equation} \label{eq: 4.40}
					\bigl(v(s),\varphi_1\bigr)_K=(u_0,\varphi_1)_K=\alpha_1,\qquad\forall
					s\ge 0.
				\end{equation}
				
				\item[(iii)]
				It holds
				$$\|v(s)\|_K\le Ce^{-s/2}\|u_0\|_K,\qquad\forall s\ge0,\forall u_0\in
				E_1^\perp$$
				and for all $m\in\N$ there exists $C_m>0$ independent of initial data such that 
				\begin{equation} \label{eq: 4.41}
					\|v(s)\|_{m,K}\le C_m e^{-s/2}\|u_0\|_K,\qquad\forall s\ge1,\forall
					u_0\in E_1^\perp. 
				\end{equation}
				
				\item[(iv)]
				On the other hand
				\begin{equation} \label{eq: 4.42}
					\biggl\|\biggl(L-\frac{N}{2}\biggr) v(s)\biggr\|_K \le \frac{1}{\sqrt2s}
					\|u_0\|_K,\qquad\forall s>0, \forall u_0\in E_1^\perp
				\end{equation}
				y
				\begin{equation} \label{eq: 4.43}
					\|\nabla v(s)\|_K^2-\frac{N}{2}\|v(s)\|_K^2 \le
					\frac{1}{2s}\|u_0\|_K^2,\qquad\forall s>0,
					\forall u_0\in E_1^\perp
				\end{equation}
			\end{enumerate}
		\end{thm}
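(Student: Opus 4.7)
The plan is to base the entire argument on the spectral decomposition of $L$ established in Proposition 3.1, together with standard diagonalisation of self-adjoint semigroups. Expanding $u_0 = \sum_{l \ge 1} \alpha_l \varphi_l$ with $\alpha_l = (u_0, \varphi_l)_K$, I take as candidate solution
\[
v(y,s) = \sum_{l \ge 1} e^{-(\mu_l - N/2) s}\, \alpha_l \varphi_l(y),
\]
which converges in $L^2(K)$ for every $s \ge 0$ because $\mu_l \ge N/2$. This yields both existence and uniqueness for (i): uniqueness follows by projecting an arbitrary $L^2(K)$-solution onto the basis $\{\varphi_l\}$, since the coefficient $\beta_l(s) = (v(s), \varphi_l)_K$ solves the decoupled ODE $\dot{\beta}_l = -(\mu_l - N/2)\beta_l$. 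The $C^\infty$ smoothing is the standard regularising property of self-adjoint semigroups: the factor $e^{-(\mu_l - N/2)s}$ dominates any polynomial growth in $\mu_l$ at each fixed $s > 0$, so $v(s) \in D(L^k)$ for every $k$; combining this with the embedding $D(L^k) \hookrightarrow H^{2k}(K)$, obtained by iterating the regularity step of Proposition 3.1(iv), gives $v(s) \in H^m(K)$ for every $m$, with smooth (actually analytic) dependence on $s$.

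Property (ii) is immediate: since $\mu_1 = N/2$, the first term $\alpha_1 \varphi_1$ in the series is $s$-independent, so $(v(s), \varphi_1)_K = \alpha_1$ for every $s \ge 0$.

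For (iii), if $u_0 \in E_1^\perp$ then $\alpha_1 = 0$ and the remaining eigenvalues satisfy $\mu_l - N/2 \ge \mu_2 - N/2 = 1/2$, whence Parseval gives
\[
\|v(s)\|_K^2 = \sum_{l \ge 2} e^{-2(\mu_l - N/2) s}\alpha_l^2 \le e^{-s}\|u_0\|_K^2.
\]
For the $H^m$ bound with $s \ge 1$, I split $S(s) = S(1) S(s-1)$: the preceding estimate applied to $S(s-1)u_0$, which still belongs to $E_1^\perp$, gives decay by $e^{-(s-1)/2}$, while the smoothing bound from (i) at time $1$ furnishes a finite constant $C_m' = \|S(1)\|_{L^2(K) \to H^m(K)}$.

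The estimates in (iv) are standard analytic-semigroup bounds arising from maxima of scalar functions of the eigenvalues. Writing $A = L - N/2$, for $u_0 \in E_1^\perp$,
\[
\|A v(s)\|_K^2 = \sum_{l \ge 2}(\mu_l - N/2)^2 e^{-2(\mu_l - N/2) s}\alpha_l^2 \le \Bigl(\sup_{x > 0} x^2 e^{-2xs}\Bigr) \|u_0\|_K^2 = \frac{1}{e^2 s^2}\|u_0\|_K^2,
\]
which implies (4.42) since $1/e^2 \le 1/2$. For (4.43), the weighted integration-by-parts identity $(Lv, v)_K = \int |\nabla v|^2 K$ gives
\[
\|\nabla v(s)\|_K^2 - \tfrac{N}{2}\|v(s)\|_K^2 = \bigl(A v(s), v(s)\bigr)_K = \sum_{l \ge 2}(\mu_l - N/2)\, e^{-2(\mu_l - N/2) s}\alpha_l^2,
\]
and $\sup_{x > 0} x\, e^{-2xs} = 1/(2 e s) \le 1/(2s)$ yields the claim. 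The only non-routine point I anticipate is verifying the embedding $D(L^k) \hookrightarrow H^{2k}(K)$ needed for the $H^m$-smoothing in (i): the argument of Proposition 3.1(iv) must be iterated while carefully tracking the Gaussian weight $K$ at each step so that all mixed partials remain in $L^2(K)$.
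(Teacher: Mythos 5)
Your proof is correct and follows essentially the same route as the paper: parts (i)--(iii) are exactly the spectral decomposition $v(s)=\sum_{l\ge1}e^{-(\mu_l-N/2)s}\alpha_l\varphi_l$ that the paper develops in \eqref{eq: 4.32}--\eqref{eq: 4.38}, including the identification of the $H^m(K)$ norm with sums $\sum_l\mu_l^m\beta_l^2$ in \eqref{eq: 4.34}, and the proof in the text simply declares (i)--(iii) already established. The only divergence is in (iv), where the paper invokes the abstract smoothing estimate for self-adjoint maximal monotone semigroups (Cazenave--Haraux, Th.\ 3.2.1) while you reprove it by elementary spectral calculus ($\sup_{x>0}x^2e^{-2xs}=1/(e^2s^2)$ and $\sup_{x>0}xe^{-2xs}=1/(2es)$), which is a legitimate shortcut that even yields slightly sharper constants than \eqref{eq: 4.42}--\eqref{eq: 4.43}.
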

		
		\begin{proof}
			The propositions (i)--(iii) have previosly been proven. Since
			${L-\frac{N}{2}I}$ is a maximal monotone and selfadjoint operator on 
			$L^2(K)$ with domain $H^2(K)$, \eqref{eq: 4.42} and \eqref{eq: 4.43} are a consequence of the regularizing effect of the equation (cf.\ T.~Cazenave and A.~Haraux \cite{CaHa}, Th.
			3.2.1.)
		\end{proof}
		
		As $H^m(K)\subset W^{m,p}(\R^N)$ with continuous injection for all
		$p\in[1,2]$, from \eqref{eq: 4.38} we deduce  that
		\begin{equation} \label{eq: 4.44}
		\|v(s)-\alpha_1\varphi_1\|_{W^{m,p}(\R^N)} \le C_m e^{-s/2}\|u_0\|_K,
		\qquad\forall s\ge1.
		\end{equation}
		
		On the other hand, for all $m\in\N$ and $p\in[2,\infty]$ there exists $m'\in\N$
		such that $H^{m'}(\R^N)\subset W^{m,p}(\R^N)$ with continuous injection and thus, we deduce that \eqref{eq: 4.44} es cierto for all $m\in\N$ y
		$p\in[1,\infty]$.
		
		In terms of the solution $u=u(x,t)$ of the heat equation \eqref{eq: 4.27}, the estimate \eqref{eq: 4.44} implies \begin{equation} \label{eq: 4.45}
		\|D^\alpha u(x,t)-MD^\alpha G(x,t)\|_p \le C_{p,m}
		t^{-\np-(|\alpha|+1)/2},\qquad\forall t\ge1
		\end{equation}
		for all $\alpha\in\N$ and $p\in[1,\infty]$, $M$  being the mass of the
		solution: ${M=\int u_0(x)\,dx}$. In particular we obtain
		\begin{equation} \label{eq: 4.46}
		\|u(t)-MG(t)\|_p \le C_p t^{-\np-(|\alpha|+1)/2},
		\qquad\forall t\ge1 
		\end{equation}
		for all $p\in[1,\infty]$.
		
		We obtain therefore a result analogous to the proof of Lemma 1.2 of Chapter 1, but this time only for initial data $u_0\in L^2(K)$. Nevertheless, since $L^2(K)$ is dense in $L^2(\R^N)$, the density argument
		used in the proof of Theorem 1.1 of Chapter I now allows us to conclude that
		\begin{equation*}
			t^\np\|u(t)-MG(t)\|_p\to0  \quad \text{ when } \quad t\to\infty
		\end{equation*}
		for all $p\in[1,\infty]$ and $u_0\in L^1(\R^N)$ such that ${\int
			u_0(x)\,dx=M}$.
		
		Thus, by means of the self-similar variables, we obtain optimal results about the asymptotic behavior of the solutions to the heat equation.

		\chapter{The Cauchy problem}
		
		\section{The Cauchy problem in $L^1(\R^N)\cap L^\infty(\R^N)$}
		
		In this part, we study the equation
		\begin{equation} \label{eq: 5.1}
			\begin{cases}
			u_t-\Delta u=a\cdot\nabla\bigl(F(u)\bigr) & \text{ in } \R^N\times(0,\infty),\\
			u(0)=u_0
			\end{cases}
		\end{equation}
	with initial data $u_0\in L^1(\R^{N})\cap L^\infty(\R^N)$.
		
		We have the following results of existencia, uniqueness and regularity of
		solutions.
		\begin{thm}
			Suppose that $F\in C^1(\R)$ and $F(0)=0$. Then, for all initial data $u_0\in L^1(\R^N)\cap L^\infty (\R^N)$ there exists a unique
			solution $u\in C([0,\infty);L^1(\R^N))\cap
			L^\infty\bigl(\R^N\times(0,\infty )\bigr)$. This solution
			satisfies $u\in C\bigl((0,\infty);W^{2,p}(\R^N)\bigr)\cap
			C^1\bigr((0,\infty );L^p(\R^N)\bigr)$ for all $p\in(1,\infty)$.
			
			Besides, if $u$ and $v$ are solutions to \eqref{eq: 5.1} corresponding to initial data $u_0$, $v_0 \in L^1(\R^N)\cap
			L^\infty(\R^N)$, it holds
			$$\|u(t)-v(t)\|_1\le
			\|u_0-v_0\|_1,\qquad\forall t\ge 0.$$
			
			In other words, \eqref{eq: 5.1} a contraction semigroup in $L^1(\R^N)$.
		\end{thm}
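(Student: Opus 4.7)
The plan is to combine a Duhamel fixed-point argument for local existence, uniqueness and regularity with a priori estimates from the maximum principle and a Kato-type inequality for global extension and $L^1$-contraction.

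First I would rewrite \eqref{eq: 5.1} in mild form as
\begin{equation*}
u(t) = G(t)\ast u_0 + \int_0^t a\cdot\nabla G(t-s) \ast F\bigl(u(s)\bigr)\, ds,
\end{equation*}
where integration by parts transfers the spatial derivative onto the heat kernel. Since $\|\nabla G(t)\|_1 \le C\, t^{-1/2}$, Young's inequality gives $\|\nabla G(t-s)\ast\varphi\|_p \le C(t-s)^{-1/2}\|\varphi\|_p$ for every $p\in[1,\infty]$, and this is the only smoothing estimate needed. Because $F\in C^1(\R)$ with $F(0)=0$, $F$ is Lipschitz on any bounded interval, so on the closed ball of $C([0,T]; L^1(\R^N)\cap L^\infty(\R^N))$ defined by $\|u(t)\|_1,\|u(t)\|_\infty \le 2(\|u_0\|_1+\|u_0\|_\infty)$, the map defined by the right-hand side above is both a self-map and a contraction for $T>0$ small enough, the smallness depending only on $\|u_0\|_1$, $\|u_0\|_\infty$, $|a|$, and $L=\sup_{|s|\le 2\|u_0\|_\infty}|F'(s)|$. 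The Banach fixed-point theorem then yields a unique local mild solution. The stated parabolic regularity $u\in C((0,\infty);W^{2,p}(\R^N))\cap C^1((0,\infty);L^p(\R^N))$ follows by bootstrapping: once $u\in L^\infty_{\mathrm{loc}}((0,T)\times\R^N)$, the source $a\cdot\nabla F(u)$ has enough integrability for the standard $L^p$ parabolic regularity theory to apply on $(0,T]$.

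Global extension requires two a priori bounds. For the $L^\infty$ bound, I would write the equation as $u_t-\Delta u + \bigl(-F'(u)a\bigr)\cdot\nabla u =0$, which is uniformly parabolic with a bounded drift (thanks to the $L^\infty$ bound already known on the local solution), so the maximum principle yields $\|u(t)\|_\infty\le\|u_0\|_\infty$. The $L^1$ bound will follow from the $L^1$-contraction applied with $v\equiv 0$, which is itself a solution since $F(0)=0$. Both bounds being uniform in time, the local result can be iterated to produce a global solution.

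Finally, to establish the $L^1$-contraction, I would apply a Kato-type argument to the difference $w=u-v$, which satisfies $w_t-\Delta w = a\cdot\nabla(F(u)-F(v))$. Multiplying by $J_\epsilon'(w)$ for a smooth convex approximation $J_\epsilon$ of $|\cdot|$ and integrating by parts gives
\begin{equation*}
\frac{d}{dt}\int J_\epsilon(w)\,dx = -\int J_\epsilon''(w)|\nabla w|^2\, dx - \int J_\epsilon''(w)\bigl(a\cdot\nabla w\bigr)\bigl(F(u)-F(v)\bigr) dx.
\end{equation*}
Using $|F(u)-F(v)|\le L|w|$ and Cauchy-Schwarz, the convection term is dominated by $\tfrac12\int J_\epsilon''(w)|\nabla w|^2\, dx + \tfrac{L^2|a|^2}{2}\int J_\epsilon''(w)w^2\,dx$. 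The first piece is absorbed into the diffusion term, and the second vanishes as $\epsilon\to 0$ because $J_\epsilon''$ concentrates near the origin while $w^2$ vanishes there. Passing to the limit gives $\tfrac{d}{dt}\|w(t)\|_1\le 0$, hence the contraction. The main obstacle is justifying this formal calculation rigorously: one needs $w$ smooth enough to perform the integrations by parts, which I would handle by first carrying out the argument on $(\tau,\infty)$ using the parabolic smoothing effect established above (so that $w\in C((\tau,\infty);W^{2,p})$), and then letting $\tau\downarrow 0$ using the $L^1$-continuity of $w$ at $t=0$.
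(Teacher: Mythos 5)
Your proposal is correct and follows essentially the same route as the paper: a Banach fixed-point argument on the Duhamel formulation in $C([0,T];L^1(\R^N))\cap L^\infty$, global extension via a priori $L^1$ and $L^\infty$ bounds, and a regularized Kato-type multiplier argument for the $L^1$-contraction (which also yields uniqueness). The differences are only in sub-steps: the paper gets the $L^\infty$ bound by testing with $\sgn[(u-\mu)^+]$ rather than by the drift-form maximum principle, gets the $L^1$ bound directly by testing with $\sgn(u)$ and observing that the convection term is an exact divergence $\int a\cdot\nabla\bigl(H(u)\bigr)\,dx=0$ rather than via the contraction against $v\equiv 0$, and uses a piecewise-linear regularization $\theta_\epsilon$ of the sign function where you use a smooth convex $J_\epsilon$ with a Cauchy--Schwarz absorption.
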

		
		\begin{proof}
			
			\begin{enumerate}
				\item[(1)] {\em Existence}.
				
				We consider the integral equation associated to \eqref{eq: 5.1}:
				
				\begin{align} \label{eq: 5.2}
					u(t)&=G(t)*u_0+\int^t_0 G(t-s)*a\cdot\nabla
					\Bigl(F\bigl(u(s)\bigr)\Bigr)\,ds \\
					&=G(t)*u_0+\int^t_0 a\cdot\nabla G(t-s)*F\bigl(u(s)\bigr)\,ds \nonumber
				\end{align}
				with $G$ being the heat kernel
				$$G(x,t)=(4\pi t)^{-N/2}\exp\biggl(-\frac{|x|^2}{4t}\biggr)$$
			and we henceforth denote by the s\'imbolo $*$ the convolution in the spatial variables.
				
				The solutions to \eqref{eq: 5.2} are weak solutions of \eqref{eq: 5.1}. We seek  the  local solution to \eqref{eq: 5.2} (for $0\le t\le T$, with $T>0$ sufficiently small) as a fixed point of the operator $$[\phi(u)](t)=G(t)*u_0
				+\int^t_0a\cdot\nabla G(t-s)*F\bigl(u(s)\bigr)\,ds$$
				in the Banach space
				$$X_T=C\bigl([0,T];L^1(\R^N)\bigr)\cap
				L^\infty\bigl(\R^N\times(0,T)\bigr)$$
				with $T>0$ sufficiently small that we will choose later. We will apply the Banach fixed point theorem. It will be enough to prove that $\phi$ is a strict contraction in $B_R$, the ball of radius $R$ in $X_T$:
				$$B_R=\{\,u\in X_T:\sup_{t\in
					[0,T]}\{\|u(t)\|_1+\|u(t)\|_\infty\}<R\,\}$$
				with a radius $R$ that we will choose later.				
				Let $M=\max\{\|u_0\|_1,\|u_0\|_\infty\}$. Let
				$r=1,\infty$ and let us estimate the $L^\infty\bigl(0,T;L^r(\R^N)\bigr)$ norm of $\phi(u)$ for $u\in X_T$.
				Thanks to the Young inequality it holds
				$$\|[\phi(u)](t)\|_r\le \|G(t)\|_1 \|u_0\|_r+\int^t_0 \|a\cdot\nabla
				G(t-s)\|_1 \bigl\|F\bigl(u(s)\bigr)\bigr\|_r\,ds.$$
				
				By means of an explicit computation it can be seen that				\begin{align*}
						\|G(t)\|_1&= 1,\forall t>0\\
						\|\nabla G(t)\|_1&\le Ct^{-1/2},\forall t>0.
				\end{align*}
				therefore
				$$\|[\phi(u)](t)\|_r\le M+C|a|\int^T_0(t-s)^{-1/2}
				\bigl\|F\bigl(u(s)\bigr)\bigr\|_r\,ds.$$
				On the other hand
				$$\|F\bigl(u(t)\bigr)\|_r\le m(R)\|u(t)\|_r,\qquad\forall t\in [0,T]$$
				con
				$$m(R)=\max_{z\in [0,R]}\biggl|\frac{F(z)}{z}\biggr|$$
				y
				$$\|u(t)\|_r<R,\qquad\forall t\in [0,T]$$
				for $r=1,\infty$, for all $u\in B_R$.
				
				Thus
				\begin{align*}
					\|[\phi(u)](t)\|_r&\le M+C|a|m(R)R\int^t_0(t-s)^{-1/2}\,ds\\
					&=M+2C|a|m(R)Rt^{1/2}\\
					&\le M+2C|a|m(R)RT^{1/2},\qquad\forall t\in [0,T]
				\end{align*}
				for $r=1,\infty$.
				
				If $R$, $T>0$ are such that				\begin{equation} \label{eq: 5.3}
					M+2C|a|m(R)RT^{1/2}<R/2
				\end{equation}
				we will have guaranteed that $\phi(B_R)\subset B_R$.
				
				For \eqref{eq: 5.3} to be fulfilled, we choose $R$ such that
				\begin{equation} \label{eq: 5.4}
					R\ge 4M
				\end{equation}
				and then $T>0$ sufficiently small as to have
				\begin{equation} \label{eq: 5.5}
					2C|a|m(R)RT^{1/2}<R/4.
				\end{equation}
				We now estimate $\phi(u)-\phi(v)$ in $L^\infty\bigl(0,T;L^r(\R^N)\bigr)$
				for all $u,v\in B_R$. It holds
				\begin{align*}
					\|[\phi(u)](t)-[\phi(v)](t)\|_r &\le \int^t_0 \|a\cdot\nabla
					G(t-s)\|_1 \bigl\|F\bigl(u(s)\bigr)-F\bigl(v(s)\bigr)\bigr\|_r\,ds\\
					&\le C|a|\int^t_0(t-s)^{-1/2}
					\bigl\|F\bigl(u(s)\bigr)-F\bigl(v(s)\bigl)\bigr\|_r\,ds
				\end{align*}
				
				Set
				$$L(R)= \max_{z\in [0,R]} |F'(z)|.$$
				
				Then
				\begin{align*}
					\|[\phi(u)](t)-[\phi(v)](t)\|_r&\le C|a|L(R)\int^t_0(t-s)^{-1/2}
					\|u(s)-v(s)\|_r\,ds\\
					&\le 2C|a|L(R)T^{1/2}\|u-v\|_{L^\infty(0,T;L^r(\R^N))},\\
					{\forall t\in [0,T],\forall u,v\in B_R.}
				\end{align*}
				
				To guarantee that
				$\phi$ is a strict contraction in $B_R$ it will be enough to choose
				$T$ sufficiently small as to, in addition to \eqref{eq: 5.5}, 
				$$2C|a|L(R)T^{1/2}<1$$
				holds as well.				
				In this way we deduce that $\phi$ admits a unique fixed point in $B_R$,
				which ensures the existence of a local solution of the integral equation \eqref{eq: 5.2} for $t\in[0,T]$. This solution satisfies \eqref{eq: 5.1} for $t\in
				[0,T]$.
				
				By means of classical extension methods (see for instance
				\cite{CaHa}), we deduce  the existence of a maximal solution
				$u\in C\bigl([0,T_m);L^1(\R^N)\bigr)
				\cap L^\infty\bigl(\R^N\times(0,T_m)\bigr)$
				where $T_m>0$ is the maximal time of existence that satisfies the
				following alternative:
				$$T_m=\infty$$
				or
				$$\hbox{Si }T_m<\infty,\quad \limsup_{t\to T^-_m}
				\{\|u(t)\|_1+\|u(t)\|_\infty\}=\infty. $$
				In order to ensure that the solution is global, that is to say that
				$u\in C\bigl([0,\infty); L^1(\R^N)\bigr)\cap
				L^\infty\bigl(\R^N\times(0,\infty)\bigr)$, it is enough to prove a priori estimate for the solution, meaning
				$$\sup_{t\in[0,T]}\{\|u(t)\|_1+\|u(t)\|_\infty\}<\infty. $$
				
				As mentioned in what precedes, the solution to \eqref{eq: 5.2} is a solution of
				the differential equation  \eqref{eq: 5.1} in $\R^N\times[0,T_m)$.
				Classical results on the
				regularity for the linear heat equation and a	{\em boot-strap} argument allows for showing that the constructed solution satisfies the following regularity:
				\begin{equation} \label{eq: 5.6}
					u\in C\bigl((0,T_m);W^{2,p}(\R^N)\bigr)\cap
					C^1\bigl((0,T_m);L^p(\R^N)\bigr).
				\end{equation}
				
				Multiplying formally the equation \eqref{eq: 5.1} by
				\begin{align*}
				\sgn(u) = \begin{cases}
				1&u>0  \\  0&u=0  \\  -1&u<0
				\end{cases}
				\end{align*}
				and integrating in $\R^N$ we obtain
				\begin{align*}
					{\frac{d}{dt} \int |u(x,t)|\,dx
						-\int\Delta u(x,t)\sgn\bigl(u(x,t)\bigr)\,dx}\\
						=\int a\cdot\nabla
						\bigl(F(u(x,t)\bigr)\sgn\bigl(u(x,t)\bigr)\,dx\\
						=\int a\cdot\nabla
						\Bigl(H\bigl(u(x,t)\bigr)\Bigr)\,dx				
				\end{align*}
				with
				$$H(z)=\int^z_0 F'(s)\sgn(s)\,ds.$$
				Integrating by parts we deduce  that
				$$\int a\cdot\nabla \bigl(H(u)\bigr)\,dx=0$$
				and the Kato inequality ensures that
				$$-\int \Delta u\sgn(u)\,dx\ge 0$$
				so
				$$\frac{d}{dt}\int |u(x,t)|\,dx\le 0$$
				and therefore
				\begin{equation} \label{eq: 5.7}
					\|u(t)\|_1\le \|u_0\|_1,\qquad\forall t\in [0,T_m).  
				\end{equation}
				To justify the formal calculation that we have just carried out, we define Lipschitz regularization of $\sgn(u)$:
				\begin{equation*}
					\theta_\epsilon(u)=
					\begin{dcases}
					\frac{u}{\epsilon}&|u|<\epsilon\\
					\sgn(u)&\epsilon\le|u|\le\frac{1}{\epsilon}\\
					u-\biggl(\frac{1}{\epsilon}-1\biggr)&|u|\ge\frac{1}{\epsilon}.
					\end{dcases}
				\end{equation*}
				As $u\in L^1(\R^N)\cap L^\infty(\R^N)$ it can be seen that
				$\theta_\epsilon(u) \in L^1(\R^N)\cap L^\infty (\R^N)$ for all
				$\epsilon >0$, and 
				$$\theta_\epsilon(u)\to\sgn(u) \quad \text{ a.e. } \quad x\in \R^N, \forall
				t\in [0,T_m). $$
				
				Multiplying the equation \eqref{eq: 5.1} by $\theta_\epsilon(u)$ and integrating
				in $\R^N$ we now obtain in a rigorous way that
				\begin{align*}
					\int a\cdot\nabla \bigl(F(u)\bigr)\theta_\epsilon(u)\,dx&=0\\
					\int\Delta u\theta_\epsilon(u)\,dx&\ge0
				\end{align*}
				for all $\epsilon >0$ and integrating in $[0,t]$ with respect to time, we obtain
				\begin{align*}
					\int^t_0\!\!\!\int u_t(x,s)\theta_\epsilon
					\bigl(u(x,s)\bigr)\,dx\,ds&=\int^t_0\biggl(\frac{d}{dt}\int\gamma
					_\epsilon\bigl(u(x,s)\bigr)\,dx\biggr)\,ds\\
					&=\int\gamma_\epsilon \bigl(u(x,t)\bigr)\,dx
					-\int\gamma_\epsilon\bigl(u_0(x)\bigr)\,dx \le 0
				\end{align*}
				where
				$$\gamma_\epsilon(z)=\int^z_0\theta_\epsilon(s)\,ds.$$
				
				Thus
				$$\int\gamma_\epsilon\bigl(u(x,t)\bigr)\,dx\le \int\gamma_\epsilon
				\bigl(u_0(x)\bigr)\,dx. $$
				Passing to the limit in this inequality  when  $\epsilon \to 0$, by the Lebesgue dominated convergence Theorem we obtain \eqref{eq: 5.7}.
				
	To estimate the $L^\infty$ norm of the solution we define 
				$\mu=\|u_0\|_\infty$. We multiply the equation \eqref{eq: 5.1} by
				$\sgn[(u-\mu)^+]$ where
				\begin{equation*}
					z^+=
					\begin{cases}
					z&z\ge0  \\  0&z\le0
					\end{cases}
				\end{equation*}
				Integrating in $\R^N$ and by arguments analogous to those used in what precedes, we deduce that 
				$$\frac{d}{dt}\int\bigl(u(x,t)-\mu\bigr)^+\,dx\le 0.$$
				As $(u_0-\mu)^+=0$ we obtain that
				$$u(x,t)\le \mu, p.c.t. x\in \R^N, \forall t\in [0,T_m). $$
				
				Multiplying the equation by $\sgn[(u+\mu)^-]$ with $z^-=-(-z)^+$,
				we obtain
				$$u(x,t)\ge -\mu , p.c.t. x\in \R^N, \forall t\in [0,T_m). $$
				we obtain
				\begin{equation} \label{eq: 5.8}
					\|u(t)\|_\infty\le\mu,\qquad\forall t\in [0,T_m).
				\end{equation}
			From \eqref{eq: 5.7} and \eqref{eq: 5.8} we deduce  that $T_m=\infty$, which concludes the proof of the existence of a global solution. From \eqref{eq: 5.6} we obtain the regularity of the solution stated in the Theorem.
				
				\item[(2)] {\em Uniqueness}.
				
				The uniqueness is an immediate consequence of the following $L^1$ contraction property enjoyed by the semigroup generated by the system \eqref{eq: 5.1}:
				
				\begin{equation} \label{eq: 5.9}
					\|u(t)-v(t)\|_1 \le \|u_0-v_0\|_1,\qquad\forall t>0
				\end{equation}
				for all solutions to \eqref{eq: 5.1} with initial data $u_0,v_0\in
				L^1(\R^N)\cap L^\infty (\R^N)$.
				
				To prove this property, simply multiply by $\sgn(u-v)$ the equation satisfied by $u-v$ and integrate in $\R^N$. we obtain
				\begin{equation} \label{eq: 5.10}
				\frac{d}{dt}\|(u-v)(t)\|_1\le 0.
				\end{equation}
				Again, to justify this calculation we multiply the equation by
a regularization $\theta_\epsilon(u-v)$ and we pass to the limit as
				$\epsilon\to0$. 
			\end{enumerate}
		\end{proof}
		
		\section{Decay in $L^p(\R^N)$}
		
		In this part we will obtain $L^p$ estimates for the solutions to \eqref{eq: 5.1}. We will show decay results s analogous to those obtained in Chapter I for the linear heat equation.		
		The following result holds.
		
		\begin{thm}
			For all $p\in [1,\infty]$ there exists a positive constant $C=C(p,N)>0$ such that
			\begin{equation} \label{eq: 5.11}
				\|u(t)\|_p\le C\|u_0\|_1 t^{-\np},\qquad \forall t>0
			\end{equation}
			for all solution to $\eqref{eq: 5.1}$ with initial data $u_0\in L^1(\R^N)\cap
			L^\infty (\R^N)$.
		\end{thm}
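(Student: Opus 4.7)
The plan is a Nash-type energy method, exploiting that the convection term is in divergence form and therefore integrates away against the natural $L^p$ multipliers; once that key observation is made, the argument parallels the classical decay proof for the linear heat equation.

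First, the $p=1$ bound is already contained in Theorem 4.1: applying the $L^1$ contraction with $v\equiv 0$ gives $\|u(t)\|_1\le\|u_0\|_1$ for all $t\ge 0$.

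Next, for even $p\ge 2$, I would multiply \eqref{eq: 5.1} by $|u|^{p-2}u$ and integrate over $\R^N$, using the regularity from Theorem 4.1 to justify the manipulations (if needed, first with the Lipschitz regularizations $\theta_\varepsilon$ employed in the proof of Theorem 4.1, then passing to the limit). The convection term rewrites as $\int a\cdot\nabla H_p(u)\,dx$ with $H_p'(s)=|s|^{p-2}s\,F'(s)$; since $|H_p(u)|\le C(\|u_0\|_\infty)|u|^p\in L^1(\R^N)$, a cutoff-and-limit argument shows it vanishes. The diffusion produces the Dirichlet energy, yielding
\begin{equation*}
\frac{1}{p}\frac{d}{dt}\|u(t)\|_p^p+\frac{4(p-1)}{p^2}\bigl\|\nabla\bigl(|u(t)|^{p/2}\bigr)\bigr\|_2^2=0.
\end{equation*}

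Setting $v=|u|^{p/2}$, the Nash inequality $\|v\|_2^{2+4/N}\le C_N\|\nabla v\|_2^2\|v\|_1^{4/N}$ combined with the above identity gives
\begin{equation*}
\frac{d}{dt}\|u\|_p^p\le -c_{N,p}\,\frac{\|u\|_p^{p(1+2/N)}}{\|u\|_{p/2}^{2p/N}}.
\end{equation*}
Starting from $p=2$ with the $L^1$ bound $\|u(s)\|_1\le\|u_0\|_1$, the resulting Bernoulli ODE integrates to $\|u(t)\|_2\le C_2\,t^{-N/4}\|u_0\|_1$. Iterating with $p=2^k$ for $k=2,3,\ldots$, each step fed by the decay established in the previous one, bootstraps to $\|u(t)\|_{2^k}\le C_k\,t^{-(N/2)(1-2^{-k})}\|u_0\|_1$ for every $k\ge 1$.

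Finally, I would reach $p=\infty$ by a Moser-type analysis of the iteration, tracking the constants $C_k$ carefully so as to show that $\sup_k C_k<\infty$; passing to the limit yields $\|u(t)\|_\infty\le C_\infty\,t^{-N/2}\|u_0\|_1$. Intermediate values $p\in(1,\infty)\setminus\{2^k\}$ follow from the log-convexity of $p\mapsto \log\|u(t)\|_p$ by interpolating between the dyadic estimates and the $L^1$ bound. The main obstacle will be this last step: controlling the accumulation of the constants through the Moser iteration, since both the Nash constant and the comparison yielding the ODE solution worsen as $p\to\infty$. The standard remedy is to establish a recursive bound for $C_k$ whose associated geometric product/series converges; everything else is routine and the decoupling from $F$ at the level of decay rates reflects precisely the vanishing of the divergence-form convection term.
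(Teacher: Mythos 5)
Your proposal is correct and follows essentially the same route as the paper: the $L^1$ contraction for $p=1$, the multiplier $|u|^{p-2}u$ with the convection term vanishing as the integral of a divergence, a Nash/Gagliardo--Nirenberg interpolation inequality turning the energy identity into a Bernoulli-type differential inequality, and a Moser iteration over $p=2^k$ with convergent constants to reach $p=\infty$. The only (immaterial) difference is organizational: the paper runs the iteration via the $L^q\to L^{2q}$ smoothing estimate combined with time-translation invariance and a dyadic splitting of the time interval, whereas you feed the decay from the previous step directly into the ODE as a time-dependent coefficient; both yield $\sup_k C_k<\infty$.
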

		
		\begin{obs}
			\begin{enumerate}
				\item[(1)]
				The estimate \eqref{eq: 5.11} analogous to the one we obtain for solutions to the linear heat equation. This estimate implies that
				$t^{\np}u(t)\in L^\infty \bigl(0,\infty ;L^p(\R^N)\bigr)$.Therefore, its behavior in $L^p(\R^N)$ should be considered when $t\to+\infty$.
				
				\item[(2)]
				The estimates will be demonstrated by means of the multiplier method.
				We multiply the equation by $|u|^{r-1}u$ for any $r>1$.
				Integrating on $\R^N$, we see that the convective term vanishes. That is why we obtain the same estimates that for the heat equation linear. An iterative argument allows obtaining the results for
				$p=\infty$. In this case \eqref{eq: 5.11} means
				$$\|u(t)\|_\infty\le C\|u_0\|_1t^{-N/2},\qquad \forall t>0. $$
				
				\item[(3)]
	These estimates are very useful when extending the results of existence and uniqueness of solutions to \eqref{eq: 5.1} to initial data $u_0\in L^1(\R^N)$.
			\end{enumerate}
		\end{obs}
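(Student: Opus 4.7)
The plan is to implement the multiplier method sketched in the Observation above. First, we would multiply \eqref{eq: 5.1} by $|u|^{r-1}u$ for $r>1$ and integrate over $\R^N$. The time derivative produces $\frac{1}{r+1}\frac{d}{dt}\|u(t)\|_{r+1}^{r+1}$, while the diffusive term gives, after one integration by parts,
\[
\int -(\Delta u)\,|u|^{r-1}u\,dx \;=\; r\int|u|^{r-1}|\nabla u|^2\,dx \;=\; \frac{4r}{(r+1)^2}\,\|\nabla w(t)\|_2^2,
\]
where $w=|u|^{(r-1)/2}u$ satisfies $\|w\|_2^2=\|u\|_{r+1}^{r+1}$. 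The convective contribution can be rewritten as $\int a\cdot\nabla H(u)\,dx$ with $H(z)=\int_0^z F'(\sigma)|\sigma|^{r-1}\sigma\,d\sigma$; since $u\in L^1(\R^N)\cap L^\infty(\R^N)$ yields $H(u)\in L^1(\R^N)$, the divergence theorem makes this term vanish. We thus recover the same energy identity as in the linear heat equation:
\begin{equation} \label{eq: planid}
\frac{1}{r+1}\frac{d}{dt}\|u(t)\|_{r+1}^{r+1}+\frac{4r}{(r+1)^2}\|\nabla w(t)\|_2^2=0.
\end{equation}
As with the $L^1$ contraction in Theorem~4.1, this formal computation has to be justified by multiplying instead by a bounded Lipschitz approximation of $|u|^{r-1}u$ and passing to the limit using dominated convergence.

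Next, we combine \eqref{eq: planid} with the Nash inequality
\[
\|w\|_2^{2(1+2/N)}\le C_N\,\|\nabla w\|_2^2\,\|w\|_1^{4/N},
\]
observing that $\|w\|_1=\|u\|_{(r+1)/2}^{(r+1)/2}$. At the initial step $r=1$ we have $\|w\|_1=\|u(t)\|_1\le\|u_0\|_1$ by the contraction property established in Theorem~4.1, so setting $y(t):=\|u(t)\|_2^2$ we are led to the scalar differential inequality
\[
y'(t)\le-\frac{C}{\|u_0\|_1^{4/N}}\,y(t)^{1+2/N},
\]
whose explicit integration yields $\|u(t)\|_2\le C\|u_0\|_1\,t^{-N/4}$, which is precisely \eqref{eq: 5.11} at $p=2$. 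Interpolation with $\|u(t)\|_1\le\|u_0\|_1$ then covers all $p\in[1,2]$.

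For $p>2$, the plan is to iterate along the dyadic sequence $p_k=2^k$: at step $k$ we apply \eqref{eq: planid} with $r+1=p_{k+1}$ and control the factor $\|w\|_1=\|u\|_{p_k}^{p_k/2}$ using the decay estimate obtained at step $k-1$. Integrating the resulting scalar ODE produces $\|u(t)\|_{p_k}\le C_{p_k}\|u_0\|_1\,t^{-\frac{N}{2}(1-1/p_k)}$, and interpolation fills in the exponents between consecutive $p_k$'s. The hard part will be the passage to the endpoint $p=\infty$: along the iteration the constants $C_{p_k}$ grow at each stage, and a careful Moser-type bookkeeping is required to show that they remain bounded as $k\to\infty$. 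Once this uniform control is in place, letting $p_k\to\infty$ produces $\|u(t)\|_\infty\le C\|u_0\|_1\,t^{-N/2}$, completing the proof of \eqref{eq: 5.11}.
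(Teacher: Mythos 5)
Your computation of the energy identity and the vanishing of the convective term is exactly the paper's starting point, and your use of the Nash inequality is the $p=2$ case of the paper's interpolation Lemma 4.4. Where you diverge is in the treatment of finite $p>2$: you run a dyadic Moser iteration from the outset, controlling $\|w\|_1=\|u\|_{p_k}^{p_k/2}$ by the decay already proved at level $p_k$, whereas the paper proves the decay for \emph{each} finite $p\in(1,\infty)$ in a single step, using the full Gagliardo--Nirenberg inequality
$\|v\|_p^{(N(p-1)+2)p/(N(p-1))}\le C\|v\|_1^{2p/(N(p-1))}\|\nabla(|v|^{p/2})\|_2^2$
together with only the $L^1$ bound $\|u(t)\|_1\le\|u_0\|_1$, and reserves the $L^q\to L^{2q}$ iteration exclusively for the endpoint $p=\infty$. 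Your route is perfectly viable and arguably more uniform, but it makes every finite exponent depend on the previous one, and it feeds a time-dependent coefficient into each differential inequality; the paper's one-step argument for finite $p$ is cleaner and isolates the genuinely iterative difficulty at $p=\infty$.

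That endpoint is also where your proposal stops short of a proof. You correctly identify that the constants $C_{p_k}$ must be shown to stay bounded, but this is precisely the nontrivial content of the $p=\infty$ case, and the paper's mechanism for it is worth noting: rather than integrating the ODE from $0$ to $t$ with the time-dependent bound on $\|u\|_{p_k}$, it uses the monotonicity $\|u(t)\|_q\le\|u_0\|_q$ plus time-translation invariance to get $\|u(t+s)\|_{2q}\le (NCq/(4(2q-1)))^{N/4q}\|u(t)\|_q\,s^{-N/4q}$, then chains these estimates over the geometric partition $s=\tau 2^{-(n+1)}$, $q=2^n$, of the interval $[0,\tau]$. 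The product of constants then takes the explicit form $C_n=(NC/(4\tau))^{N\sum 2^{-(j+2)}}(2^{-N/2})^{\sum(j+1)2^{-(j+1)}}$, whose exponents are convergent series, so $C_n$ converges and the limit gives $\|u(\tau)\|_\infty\le C\tau^{-N/2}\|u_0\|_1$. Without carrying out this (or an equivalent) bookkeeping, your argument establishes \eqref{eq: 5.11} only for $p<\infty$; the observation's claim for $p=\infty$, and hence the $L^1$--$L^\infty$ smoothing that underpins the extension to data in $L^1(\R^N)$, remains to be justified.
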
 
		
		\begin{proof}[Proof of Theorem 4.2]
			The case $p=1$ is an immediate consequence of \eqref{eq: 5.7}, which provides the estimate
			\begin{equation} \label{eq: 5.12}
				\|u(t)\|_1\le \|u_0\|_1,\qquad\forall t\ge 0.
			\end{equation}
			
			In the case $p\in (1,\infty)$, we multiply the equation \eqref{eq: 5.1} by
			$|u|^{p-2}u$ and integrate in $\R^N$. We obtain
			\begin{align*}
				{\frac{1}{p} \frac{d}{dt}\int |u(x,t)|^p\,dx+(p-1)\int|\nabla
					u(x,t)|^2|u(x,t)|^{p-2}\,dx}\\
					=\int a\cdot\nabla\Bigl(F\bigl(u(x,t)\bigr)\Bigr)
					|u(x,t)|^{p-2}u(x,t)\,dx\\
					=\int a\cdot\nabla \Bigl(H_p\bigl(u(x,t)\bigr)\Bigr)\,dx
			\end{align*}
			with
			$$H_p(z)=\int^z_0 F'(s)|s|^{p-2}s\,ds$$
			By the arguments of the proof of Theorem 4.1 we obtain
			$$\int a\cdot\nabla \Bigl(H_p\bigl(u(x,t)\bigr)\bigr)\,dx=0$$
			and therefore
			\begin{equation} \label{eq: 5.13}
				\frac{1}{p} \frac{d}{dt} \int |u(x,t)|^p\,dx+ \frac{4(p-1)}{p^2}\int
				\bigl|\nabla (|u(x,t)|^{p/2})\bigr|^2\,dx=0
			\end{equation}
			We need the following interpolation lemma.
			
			\begin{lem}
				For all $p\in(1,\infty)$ there exists a constant
				$C=C(p,N)>0$ such that
				\begin{equation} \label{eq: 5.14}
					\|v\|_p^{\frac{(N(p-1)+2)p}{N(p-1)}} \le
					C\|v\|^{\frac{2p}{N(p-1)}}_1\|\nabla (|v|^{p/2})\|^2_2  
				\end{equation}
				For all
				$v\in W^{2,p}(\R^N)\cap L^1(\R^N)$.
			\end{lem}
			
			\begin{obs}
				Note that since $v\in W^{2,p}(\R^N)$, entonces $\Delta v\in L^p(\R^N)$.
				therefore the integral
				$$\int \Delta v|v|^{p-2}v\,dx$$
				is well defined. By Green's formula it is observed that it coincides with				${\int |\nabla (|v|^{p/2})|^2\,dx}$, from where we deduce  that $\nabla
				(|v|^{p/2})\in L^2(\R^N)$. All the terms in \eqref{eq: 5.14} are thence well defined.
			\end{obs} 
			
			\begin{proof}[Proof of Lemma 4.4]
				We will just describe the main steps of the proof. For more details, see \cite{EZ2}.				
				Let us consider solely the case $N\ge 3.$ By the Sobolev inequality
				(cf.\ H. Brezis \cite{Br}, Th.\ IX.9) we know that there exists a constant
				$C_N>0$ (which depends only on the dimension $N$) such that
				$$\|v\|_{2N/(N-2)}\le C_N\|\nabla v\|_2,\forall v\in H^1(\R^N).$$
				therefore
				$$\|v\|^{p/2}_{Np/(N-2)}=\||v|^{p/2}\|_{2N/(N-2)}\le
				C_N\|\nabla (|v|^{p/2})\|_2.$$
				So, to obtain \eqref{eq: 5.14}, it will suffice to prove
				$$\|v\|^{\frac{(N(p-1+2)p}{N(p-1)}}_p\le
				C\|v\|^{\frac{2p}{N(p-1)}}_1\|v\|^p_{Np/(N-2)}$$
				This is an immediate consequence of the classical interpolation inequalities in $L^p$ (see, for instance, H.~Brezis
				\cite{Br}, Ch.\ IV).
			\end{proof} 
			
			\paragraph*{End of proof of Theorem 4.2:}
			Combining \eqref{eq: 5.13} and \eqref{eq: 5.14} we obtain
			\begin{equation} \label{eq: 5.15}
				\frac{d}{dt}(\|u(t)\|^p_p)+\frac{4(p-1)}{Cp}
				\frac{\|u(t)\|_p^{\frac{(N(p-1)+2)p}{N(p-1)}}}%
				{\|u(t)\|^{2p/N(p-1)}_1} \le 0.
			\end{equation}
			Combining \eqref{eq: 5.12} and \eqref{eq: 5.15} we obtain
			\begin{equation} \label{eq: 5.16}
			\frac{d}{dt}(\|u(t)\|^p_p)+\frac{4(p-1)}{Cp}\|u_0\|^{-2p/N(p-1)}_1
			\|u(t)\|^{(N(p-1)+p/N(p-1)}_p \le 0.
			\end{equation}
			
			It can easily be shown that the solutions $\varphi\ge0$ of the differential inequality
			\begin{equation} \label{eq: 5.17}
				\varphi'(t)+\alpha\bigl(\varphi(t)\bigr)^\beta\le 0,\qquad\forall t>0
			\end{equation}
			with $\alpha>0$, $\beta>1$ satisfy
			$$\varphi(t)\le\bigl(\alpha (\beta -1)t
			+\varphi^{1-\beta}(0)\bigr)^{-\frac{1}{\beta-1}},\qquad\forall t>0 $$
			and therefore, in particular,
			\begin{equation} \label{eq: 5.18}
				\varphi(t)\le \bigl(\alpha (\beta-1)t\bigr)^{\frac{1}{\beta-1}},
				\qquad\forall t>0.
			\end{equation}
			
			From \eqref{eq: 5.16}--\eqref{eq: 5.17} we deduce 
			$$\|u(t)\|^p_p\le
			\biggl[\frac{8\|u_0\|^{-2p/N(p-1)}_1}{NCp}t\biggr]^{-\frac{N}{2}
				\scriptscriptstyle (p-1)}$$
			and therefore
			$$\|u(t)\|_p\le\|u_0\|_1\delfrac({8}{NCp})^{-\np}t^{-\np}.$$
			This concludes the proof of \eqref{eq: 5.11} for $p\in [1,\infty )$.
			
			Note that the constant
			$$C_p=\delfrac({8}{NCp})^{-\np}$$
			from the estimate \eqref{eq: 5.11} does not remain bounded when  $p\to\infty$ . It is for this reason that in the case $p=\infty$ the estimate \eqref{eq: 5.11} is not obtained by passing to the limit when  $p\to\infty$. It is necessary to obtain $L^q-L^p$ estimates and use an iterative argument.
			
			The interpolation inequality \eqref{eq: 5.14} for $p=2$ imples
			\begin{equation} \label{eq: 5.19}
			\|v\|^{2(N+2)/N}_2\le C\|v\|^{4/N}_1\|\nabla v\|^2_2
			\end{equation}
			for all $v\in H^2(\R^N)\cap L^1(\R^N)$. By density, this inequality
			extends to all $v\in H^1(\R^N)\cap L^1(\R^N)$.
			
			Combining \eqref{eq: 5.13} and \eqref{eq: 5.19} with $p=2q$ for $q>1$ we obtain
			\begin{equation} \label{eq: 5.20}
				\frac{d}{dt}\bigl(\|u(t)\|^{2q}_{2q}\bigr)+
				\frac{2(2q-1)}{Cq} \frac{\|u(t)\|^{2q(N+2)/N}_{2q}}{\|u(t)\|^{4q/N}_q}
				\le 0. 
			\end{equation}
			On the other hand, multiplying the equation \eqref{eq: 5.1} por
			$|u|^{q-2}u$ we easily obtain that
			$$\frac{d}{dt}\int |u(x,t)|^q\,dx\le 0.$$
			and therefore
			\begin{equation} \label{eq: 5.21}
			\|u(t)\|_q\le \|u_0\|_q,\qquad\forall t>0
			\end{equation}
			for all $q\in[1,\infty]$.
			
			Combining \eqref{eq: 5.20}--\eqref{eq: 5.21} 	it follows that		\begin{equation} \label{eq: 5.22}
				\frac{d}{dt}\bigl(\|u(t)\|^{2q}_{2q}\bigr)+
				\frac{2(2q-1)}{Cq\|u_0\|^{4q/N}_q}\|u(t)\|^{2q(N+2)/N}_{2q} \le
				0. 
			\end{equation}
			
		Applying estimate (18) which is satisfied by solutions to (17), it follows that			$$\|u(t)\|^{2q}_{2q}\le \biggl[\frac{4(2q-1)}{NCq\|u_0\|^{4q/N}_q}
			t\biggr]^{-N/2}$$
			and thus
			$$\|u(t)\|_{2q}\le\delfrac({NCq}{4(2q-1)})^{N/4q}
			\|u_0\|_q t^{-N/4q},\qquad\forall t>0.$$
			As $q/(2q-1)\le 1$ we deduce  that
			\begin{equation} \label{eq: 5.23}
				\|u(t)\|_{2q}\le
				\delfrac({NC}{4})^{N/4q}\|u_0\|_q t^{-N/4q},\qquad\forall t>0.
			\end{equation}
			As the equation \eqref{eq: 5.1} is invariant with respect to translations in  $t$, from \eqref{eq: 5.23} we deduce that
			$$\|u(t+s)\|_{2q}\le
			\delfrac({NC}{4})^{N/4q}\|u(t)\|_q s^{-N/4q},\qquad\forall s,t>0.$$
			Given $\tau>0$, choosing $s=\tau 2^{-(n+1)}$ and $q=2^n$ we obtain
			$$\|u(t+\tau 2^{-(n+1)})\|_{2^{n+1}}\le
			\delfrac({NC}{4\tau})^{N2^{-(n+2)}}
			2^{-N(n+1) 2^{-(n+2)}}\|u(t)\|_{2^n}.$$
			
			By iterating we obtain
			$$\|u\bigl(\tau(2^{-1}+\ldots 
			+2^{-n})\bigr)\|_{2^{n+1}}\le C_n\|u_0\|_1$$
			with
			$$C_n=\delfrac({NC}{4\tau})^{N \sum\limits^{n-1}_{j=0}
				2^{-(j+2)}}(2^{-N/2})^{\sum\limits^{n-1}_{j=0}(j+1)2^{-(j+1)}}$$
		and since $2^{-1}+\ldots+2^{-n}\le 1$, por \eqref{eq: 5.21} it holds
			\begin{equation} \label{eq: 5.24}
			\|u(\tau)\|_{2^{n+1}}\le C_n\|u_0\|_1. 
			\end{equation}
			
			It can easily be shown that
			$$C_n\to 2^{-N}\delfrac({NC}{4\tau})^{N/2}  when  n\to \infty $$
			and therefore from \eqref{eq: 5.24} we deduce 
			$$\|u(\tau)\|_\infty \le \delfrac({NC}{4\tau})^{N/2}2^{-N}\|u_0\|_1.$$
			This concludes the proof of \eqref{eq: 5.11} for $p\in[1,\infty]$.
		\end{proof}
		
		\begin{obs}
			Estimates of the type \eqref{eq: 5.11} were proven by M.~Schonbeck
			\cite{S1, S2} for less general non-linearities $F$ and more regular initial data using the Fourier transform. The estimate \eqref{eq: 5.11} for
			$p\in[1,\infty]$ was shown in \cite{EZ2}. The proof of \eqref{eq: 5.11} in the 
			case $p=\infty$ is an adaptation of that done by L.~Veron \cite{Ve}
		for abstract semilinear parabolic equations.
		\end{obs}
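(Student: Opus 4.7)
The plan is to prove the estimate by the classical multiplier/Nash-Gagliardo-Nirenberg iteration method, exploiting in a crucial way the fact that the convective term $a\cdot\nabla(F(u))$ is in divergence form and therefore cancels when tested against any pure power of $u$.

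First I would dispose of $p=1$, which is immediate from the $L^1$ contraction already established in Theorem 4.1, namely $\|u(t)\|_1\le\|u_0\|_1$. For $1<p<\infty$ the idea is to multiply \eqref{eq: 5.1} by $|u|^{p-2}u$ and integrate over $\R^N$. The convective term becomes $\int a\cdot\nabla(H_p(u))\,dx$ with $H_p(z)=\int_0^z F'(s)|s|^{p-2}s\,ds$, which vanishes after integration by parts since $H_p(u)\to 0$ at infinity (this is justified rigorously by regularising with $\theta_\epsilon$ as in Theorem 4.1). The Laplacian term yields, after integration by parts, $\frac{4(p-1)}{p^2}\|\nabla(|u|^{p/2})\|_2^2$. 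I therefore obtain the energy identity
\begin{equation*}
\frac{1}{p}\frac{d}{dt}\|u(t)\|_p^p+\frac{4(p-1)}{p^2}\|\nabla(|u(t)|^{p/2})\|_2^2=0.
\end{equation*}

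Next I would invoke the interpolation inequality (to be stated and proved as Lemma 4.4 below, via Sobolev and H\"older),
\begin{equation*}
\|v\|_p^{(N(p-1)+2)p/(N(p-1))}\le C\|v\|_1^{2p/(N(p-1))}\|\nabla(|v|^{p/2})\|_2^2,
\end{equation*}
and use it together with the $L^1$ bound to transform the energy identity into a closed differential inequality of Bernoulli type, $\varphi'(t)+\alpha\varphi(t)^\beta\le 0$ with $\varphi(t)=\|u(t)\|_p^p$, $\beta=(N(p-1)+p)/(Np(p-1))$. A standard ODE comparison gives $\varphi(t)\le(\alpha(\beta-1)t)^{-1/(\beta-1)}$, which, after unwinding the exponents, yields exactly \eqref{eq: 5.11} for every $p\in[1,\infty)$.

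The main obstacle is the endpoint $p=\infty$. The constant $C_p$ produced above blows up as $p\to\infty$, so I cannot simply pass to the limit. Instead I would run a Moser-type iteration: applying the same multiplier argument with $p=2q$ and combining with $\|u(t)\|_q\le\|u_0\|_q$ produces the $L^q\to L^{2q}$ smoothing estimate
\begin{equation*}
\|u(t+s)\|_{2q}\le\Bigl(\tfrac{NC}{4}\Bigr)^{N/(4q)} s^{-N/(4q)}\|u(t)\|_q,\qquad s,t>0.
\end{equation*}
Iterating this with $q=2^n$ and incremental time steps $s_n=\tau 2^{-(n+1)}$ (whose sum stays bounded by $\tau$) yields $\|u(\tau)\|_{2^{n+1}}\le C_n\|u_0\|_1$ where $C_n$ converges to a finite constant proportional to $\tau^{-N/2}$, since the exponents $\sum 2^{-(j+2)}$ and $\sum(j+1)2^{-(j+1)}$ are both convergent series. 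Letting $n\to\infty$ delivers the desired $L^\infty$ bound $\|u(\tau)\|_\infty\le C\tau^{-N/2}\|u_0\|_1$, which finishes the proof. Throughout, the a priori regularity $u\in C((0,\infty);W^{2,p})$ from Theorem 4.1 makes all formal multiplications and integrations by parts legitimate.
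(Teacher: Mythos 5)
Your proposal follows the paper's proof of the estimate \eqref{eq: 5.11} essentially verbatim: the $L^1$ contraction for $p=1$, the multiplier $|u|^{p-2}u$ with cancellation of the divergence-form convective term, the interpolation Lemma 4.4, the Bernoulli-type ODE comparison for $1<p<\infty$, and the $L^q$--$L^{2q}$ Moser iteration with time steps $\tau 2^{-(n+1)}$ for $p=\infty$. The only slip is the exponent in the Bernoulli inequality, which should be $\beta=(N(p-1)+2)/(N(p-1))$ when written for $\varphi(t)=\|u(t)\|_p^p$; otherwise the argument is correct and identical in structure to the one in the text.
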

		
		\section{The Cauchy problem in $L^1(\R^N)$}
		
		\begin{thm}
			Let $F\in C^1(\R)$ be such that $F(0)=0$. Then for all initial data $u_0\in L^1(\R^N)$ there exists a unique solution $u\in
			C\bigl([0,\infty); L^1(\R^N)\bigr)\cap L^\infty_{\rm loc}(0,\infty
			;L^\infty(\R^N))$ to $\eqref{eq: 5.1}$  such that
			$$u\in C\bigl((0,\infty );W^{2,p}(\R^N)\bigr)\cap
			C^1\bigl((0,\infty);L^p(\R^N)\bigr)$$
			for all $p\in(1,\infty)$.
			
			The solution satisfies $\eqref{eq: 5.11}$ for all $p\in[1,\infty]$.
			
			Moreover, if $u$ and $v$ are solutions to $\eqref{eq: 5.1}$ with
			initial data $u_0, v_0 \in L^1 (\R^N)$ it holds
			$$\|u(t)-v(t)\|_1\le \|u_0-v_0\|_1,\qquad \forall t\ge 0.$$
		If $u_0\ge v_0$ it holds
			$$v(x,t)\le u(x,t), \qquad\forall t>0, p.c.t. x\in\R^N. $$
		\end{thm}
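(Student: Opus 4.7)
The plan is to extend Theorem 4.1 from $L^1(\R^N)\cap L^\infty(\R^N)$ to $L^1(\R^N)$ data by approximation, leveraging the $L^1$ contraction \eqref{eq: 5.9} and the decay estimate \eqref{eq: 5.11}. Given $u_0\in L^1(\R^N)$, I would take a sequence $u_{0,n}\in L^1(\R^N)\cap L^\infty(\R^N)$ with $u_{0,n}\to u_0$ in $L^1$ (for instance by truncation at height $n$), and denote by $u_n$ the corresponding solution supplied by Theorem 4.1. The contraction \eqref{eq: 5.9} yields
$$\|u_n(t)-u_m(t)\|_1\leq \|u_{0,n}-u_{0,m}\|_1\longrightarrow 0,$$
uniformly in $t\geq 0$, so $\{u_n\}$ is Cauchy in $C([0,\infty);L^1(\R^N))$ and converges to a limit $u\in C([0,\infty);L^1(\R^N))$ with $u(0)=u_0$. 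Since $\|u_{0,n}\|_1\leq\|u_0\|_1$, Theorem 4.2 provides the uniform bound $\|u_n(t)\|_p\leq C_p\|u_0\|_1 t^{-\np}$ for every $p\in[1,\infty]$ and $t>0$, which is inherited by $u$ in the limit and places it in $L^\infty_{\mathrm{loc}}(0,\infty;L^\infty(\R^N))$.

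To identify $u$ as a solution of \eqref{eq: 5.1} on $(0,\infty)$ with the claimed regularity, I would use a restart argument. Fix $\tau>0$. The uniform bound $\|u_n(\tau)\|_\infty\leq C\|u_0\|_1\tau^{-N/2}$ together with $u_n(\tau)\to u(\tau)$ in $L^1$ gives, by interpolation, $u_n(\tau)\to u(\tau)$ in $L^p(\R^N)$ for every $p<\infty$. Viewing $u_n$ on $[\tau,\infty)$ as the Theorem 4.1 solution starting from $u_n(\tau)$, the Lipschitz control of $F$ on the uniformly bounded range $|z|\leq C\|u_0\|_1\tau^{-N/2}$ permits the limit passage in the integral equation \eqref{eq: 5.2} restarted at time $\tau$, and the $L^1$ contraction \eqref{eq: 5.9} identifies $u$ on $[\tau,\infty)$ with the unique Theorem 4.1 solution emanating from $u(\tau)$. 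As $\tau>0$ is arbitrary, $u$ solves \eqref{eq: 5.1} on $(0,\infty)$ and inherits the regularity $u\in C((0,\infty);W^{2,p}(\R^N))\cap C^1((0,\infty);L^p(\R^N))$.

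Uniqueness and the global $L^1$ contraction follow from the same restart principle: two solutions $u,v$ in the stated class satisfy $u(\tau),v(\tau)\in L^1(\R^N)\cap L^\infty(\R^N)$ for every $\tau>0$, so Theorem 4.1 gives $\|u(t)-v(t)\|_1\leq\|u(\tau)-v(\tau)\|_1$ for $t\geq\tau$; letting $\tau\to 0^+$ and using $L^1$-continuity at $t=0$ yields $\|u(t)-v(t)\|_1\leq\|u_0-v_0\|_1$, whence uniqueness. For the comparison $u_0\geq v_0\Rightarrow u\geq v$, I would adapt the sign-test argument used to prove \eqref{eq: 5.9}: multiplying the equation satisfied by $w=v-u$ by a Lipschitz regularization of $\sgn^+(w)$, integrating over $\R^N$, using Kato's inequality to control the diffusion and the divergence form of the convective term to make it vanish, one obtains $\frac{d}{dt}\int (v-u)^+\,dx\leq 0$; since $(v_0-u_0)^+=0$, it follows that $v\leq u$ a.e. for all $t>0$.

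The main technical subtlety lies in the limit passage near $s=0$ in the Duhamel integral, where the Lipschitz constant of $F$ on the range of $u_n(s)$ may grow as $s\to 0^+$ and is not obviously reconciled with the singular factor $(t-s)^{-1/2}$. The restart at an arbitrary $\tau>0$ bypasses this obstruction by working on $[\tau,\infty)$ where uniform $L^\infty$ bounds are available, and the continuity of $u$ down to $t=0$ is then supplied separately by the uniform Cauchy property of $\{u_n\}$ in $C([0,\infty);L^1(\R^N))$.
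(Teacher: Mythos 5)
Your proposal is correct and follows essentially the same route as the paper: approximate $u_0$ by bounded data, use the $L^1$ contraction to get a Cauchy sequence in $C([0,\infty);L^1(\R^N))$, use the smoothing estimate \eqref{eq: 5.11} to obtain uniform $L^\infty$ bounds for $t>0$ so that the locally Lipschitz nonlinearity can be handled in the limit, and conclude uniqueness and comparison from the sign-multiplier arguments of Theorem 4.1. Your restart at time $\tau>0$ is just a slightly more explicit packaging of the paper's passage to the limit in the equation for positive times.
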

		
		\begin{proof}
			Let $u_0\in L^1(\R^N)$. We construct an approximation			$\{u_{0,n}\}\subset {\cal D}(\R^N)$ such that
			$$u_{0,n}\to u_0 \text{ in } L^1(\R^N)  when  n\to\infty. $$
			By Theorem 4.1 we know that for every $n\in\N$ there exists a solution
			$u_n\in C\bigl([0,\infty);L^1(\R^N)\bigr)\cap L^\infty
			\bigl(\R^N\times(0,\infty)\bigr)$ to \eqref{eq: 5.1} with initial data $u_{0,n}$. By the  $L^1(\R^N)$  contraction
			property  \eqref{eq: 5.9} it holds
			$$\|u_n(t)-u_k(t)\|_1\le \|u_{0,n}-u_{0,k}\|_1, \qquad\forall t>0,
			\forall n,k\in\N.$$
			
			As $\{u_{0,n}\}$ is a Cauchy sequence in $L^1(\R^N)$, $\{u_n\}$
			is a Cauchy sequence in $C\bigl([0,\infty);L^1(\R^N)\bigr)$ and thus there exists $u\in C\bigl([0,\infty );L^1(\R^N)\bigr)$ such that
			\begin{equation} \label{eq: 5.26}
				u_n\to u \text{ in } L^\infty\bigl(0,\infty;L^1(\R^N)\bigr), \quad \text{ when } \quad  n\to
				\infty.
			\end{equation}
			
			Obviously $u(0)=u_0$. We still need to prove that $u$ satisfies the differential equation \eqref{eq: 5.1}. As $u_n$ satisfies it, we proceed in passing to the limit as $n\to\infty$. The linear terms do not manifest a difficulty.
			Nevertheless \eqref{eq: 5.26} is not sufficient for passing to the limit in the nonlinear term $a\cdot\nabla\bigl(F(u_n)\bigr)$ (except in the case when $F$ is globally Lipschitz, in which, $F\bigl(u_n
			(t)\bigr)\to F\bigl(u(t)\bigr)$ in $L^1(\R^N)$ and $a\cdot\nabla
			\Bigl(F\bigl(u (t)\bigr)\bigr)\to a\cdot\nabla \bigl(F(u(t)\bigr)$ en
			${\cal D}'(\R^N)$ for all $t\ge 0$). With the goal of solving this dififculty,  we observe that, thanks to  the estimate \eqref{eq: 5.11}, for all
			$t>0$ it holds
			\begin{equation} \label{eq: 5.27}
			\{u_n(t)\} \text{ is bounded in } L^\infty (\R^N).
			\end{equation}
			As $F$ is locally Lipschitz, Combining \eqref{eq: 5.26} and \eqref{eq: 5.27} we deduce
			that
			\begin{align}
			u(t)\in L^p(\R^N) &\quad\hbox{for all $p\in[1,\infty)$}\\
			u_n(t)\to u(t) &\quad\hbox{en $L^p(\R^N)$  when  $n\to\infty$, $ \forall
				p\in[1,\infty)$}\\
			F\bigl(u_n(t)\bigr)\to F\bigl(u(t)\bigr) &\quad\hbox{en $L^p(\R^N)$
				 when  $n\to\infty$, $ \forall p\in[1,\infty)$}
			\end{align}
			for all $t>0$.This allows us firstly to pass to the limit in \eqref{eq: 5.11} and extend these estimates to the limit function $u$ and secondly, pass to the limit in the equation \eqref{eq: 5.1} to the deduce that that the limit $u$ is solution \eqref{eq: 5.1}.
			
			Again, for classic regularity results for the linear heat equation and by means of  {\em boot-strap\/} bootstrap argument. We deduce that
			$$u\in C\bigl((0,\infty );W^{2,p}(\R^N)\bigr)\cap C^1\bigl((0,\infty);
			L^p(\R^N)\bigr) $$
			for all $p\in(1,\infty)$.
			
			The uniqueness of the solution is once again an immediate consequence of the $L^1$ contraction property \eqref{eq: 5.9} which can be shown as in Theorem 4.1.
			
			We finish the proof of Theorem 4.3 by using the basic comparison principle. Let $u_0$ and $v_0\in L^1(\R^N)$ be such that
			$u_0(x)\ge v_0(x)$, a.e.\ $x\in \R^N$. Let $u$ and $v$ be the corresponding solutions to
			\eqref{eq: 5.1}.
			
			Multiplying by $\sgn(u-v)^-$ the equation satisfied by $u-v$ and
			integrating in $\R^N$ we obtain
			$$\frac{d}{dt} \int_{\R^N}\bigl(u(x,t)-v(x,t)\bigr)^-\,dx\le 0.$$
			As $(u_0-v_0)^-=0$, we deduce that $(u-v)^-=0$, which implies $u\ge
			v$.
		\end{proof} 
		
		\section{Gradient estimates}
		
		In this part, we obtain estimates for the gradient of the
		solutions to \eqref{eq: 5.1} in the case where
		\begin{equation} \label{eq: 5.28}
			|F'(s)|\le C|s|^{1/N},\qquad \forall s\in \R.
		\end{equation}
		The following result holds. 
		\begin{thm}
			Let $F\in C^1(\R)$ be such that $F(0)=0$ and satisfies \eqref{eq: 5.27}. 
			So, for all $u_0\in L^1(\R^N)$ the solution $u=u(x,t)$
			to \eqref{eq: 5.1} satisfies
			\begin{equation} \label{eq: 5.29}
				\|\nabla u(t)\|_p \le C_p t^{-\np-\frac{1}{2}},\qquad\forall t>0
			\end{equation}
			for all $p\in[1,\infty]$.
		\end{thm}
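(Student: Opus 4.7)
The plan is to combine the Duhamel formulation with the $L^\infty$ decay estimate \eqref{eq: 5.11} from Theorem 4.2 and close a bootstrap on $\|\nabla u(t)\|_p$.

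By Theorem 4.3 the solution is smooth for $t>0$. For any $\tau\in(0,t)$, restarting the mild formulation at time $\tau$ gives
\[
u(t)=G(t-\tau)*u(\tau)+\int_\tau^t G(t-s)*\bigl(a\cdot\nabla F(u(s))\bigr)\,ds,
\]
and applying $\nabla$ yields
\[
\nabla u(t)=\nabla G(t-\tau)*u(\tau)+\int_\tau^t \nabla G(t-s)*\bigl(a\cdot\nabla F(u(s))\bigr)\,ds.
\]
Young's inequality together with $\|\nabla G(\sigma)\|_p\le C_p\sigma^{-\np-1/2}$ and mass conservation $\|u(\tau)\|_1\le\|u_0\|_1$ bound the linear term in $L^p$ by $C_p(t-\tau)^{-\np-1/2}\|u_0\|_1$. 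For the nonlinear integrand I write $a\cdot\nabla F(u)=F'(u)(a\cdot\nabla u)$; combining the growth assumption \eqref{eq: 5.28} with $\|u(s)\|_\infty\le C\|u_0\|_1 s^{-N/2}$ gives
\[
\|a\cdot\nabla F(u(s))\|_p\le |a|\,\|F'(u(s))\|_\infty\,\|\nabla u(s)\|_p\le C\,s^{-1/2}\,\|\nabla u(s)\|_p,
\]
with $C$ depending on $|a|$ and $\|u_0\|_1$. Together with $\|\nabla G(t-s)\|_1\le C(t-s)^{-1/2}$, this produces the Volterra-type inequality
\[
\|\nabla u(t)\|_p\le C_p(t-\tau)^{-\alpha}\|u_0\|_1+C'\int_\tau^t (t-s)^{-1/2}s^{-1/2}\|\nabla u(s)\|_p\,ds,\qquad \alpha:=\np+\tfrac12.
\]

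To close the bootstrap, I first argue for smooth, compactly supported $u_0$, so that $u$ is smooth up to $t=0$ and $\phi(t):=t^\alpha\|\nabla u(t)\|_p$ is continuous on $[0,\infty)$ with $\phi(0)=0$, hence locally bounded. I choose $\tau=(1-\eta)t$ for small $\eta\in(0,1/2)$; bounding $s^{-1/2}$ and $s^{-\alpha}$ by their values at $s=(1-\eta)t$ and computing $\int_{(1-\eta)t}^t(t-s)^{-1/2}\,ds=2(\eta t)^{1/2}$, the inequality rescales to
\[
\phi(t)\le C_p\,\eta^{-\alpha}\|u_0\|_1+2C'(1-\eta)^{-\alpha-1/2}\eta^{1/2}\sup_{(1-\eta)t\le s\le t}\phi(s).
\]
Because the coefficient multiplying the supremum tends to $0$ as $\eta\to 0$, I pick $\eta$ such that $K(\eta):=2C'(1-\eta)^{-\alpha-1/2}\eta^{1/2}<1$; taking the supremum over $t\in(0,T]$ yields $\sup_{0<t\le T}\phi(t)\le C_p\eta^{-\alpha}\|u_0\|_1/(1-K(\eta))$, uniformly in $T$, which is exactly \eqref{eq: 5.29}. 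The general case $u_0\in L^1(\R^N)$ follows by the approximation argument used in the proof of Theorem 4.3, since all constants depend only on $\|u_0\|_1$.

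The main obstacle is precisely this Volterra closure. With the natural restart $\tau=t/2$, the singular integrand $(t-s)^{-1/2}s^{-1/2}$ produces an $O(1)$ prefactor in front of $\sup\phi$ that need not be less than one, so the bootstrap does not close by itself. The remedy is to shrink the restart window to $[(1-\eta)t,t]$, trading a worse (but still harmless) prefactor $\eta^{-\alpha}$ on the free term for a small Gronwall coefficient of order $\eta^{1/2}$, which forces the iteration to converge.
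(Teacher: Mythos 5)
Your proof is correct, and its Duhamel-plus-bootstrap skeleton coincides with the paper's: both differentiate the mild formulation restarted at a later time, use \eqref{eq: 5.28} together with the $L^\infty$ decay of Theorem 4.2 to convert $\|F'(u(s))\|_\infty$ into a factor $Cs^{-1/2}$, and close a Volterra inequality for $\|\nabla u(t)\|_p$. Where you genuinely diverge is in how that inequality is closed. The paper restarts at a fixed time $\tau>0$ (after rescaling so that $t\in[0,1]$), integrates over all of $[0,t]$, and invokes a singular Gronwall lemma (Lemma 4.9) whose proof requires the exponent $\alpha=\np+\frac12$ of the free term to satisfy $\alpha<1$; this yields \eqref{eq: 5.29} directly only for $p\in\bigl[1,\frac{N}{N-1}\bigr)$, and the remaining exponents are reached by an iteration in $p$, applying Young's inequality with $\frac1r+\frac1q=1+\frac1p$ and climbing through $\bigl[\frac{N}{N-1},\frac{N}{N-2}\bigr)$, etc. You instead shrink the restart window to $[(1-\eta)t,t]$, which makes the absorption coefficient $O(\eta^{1/2})$ and hence $<1$ for small $\eta$, at the price of an $\eta^{-\alpha}$ loss in the free term; since $s^{-\alpha}$ is comparable to $t^{-\alpha}$ on that window, no integrability restriction on $\alpha$ arises and all $p\in[1,\infty]$ are handled in one stroke, with no iteration. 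The trade-off is the a priori finiteness of $\sup_{0<t\le T}t^{\alpha}\|\nabla u(t)\|_p$ that your absorption step needs: the paper gets this for free because it restarts at a strictly positive time where the solution is already regularized, whereas you must first take smooth compactly supported data --- for which you should say a word about why $\|\nabla u(t)\|_p$ stays bounded as $t\to0^+$, since Theorem 4.1 only gives $W^{2,p}$ regularity for $t>0$ (a local fixed point in $W^{1,p}\cap L^1\cap L^\infty$ settles it) --- and then pass to general $u_0\in L^1(\R^N)$ by the $L^1$-contraction density argument, using weak lower semicontinuity of the $L^p$ norm to transfer the gradient bound to the limit.
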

		
		\begin{proof}
			Let us introduce the rescaled functions 
			\begin{equation} \label{eq: 5.30}
				u_\lambda(x,t)=\lambda^N u(\lambda x,\lambda^2 t).
			\end{equation}
			These functions satisfy
			\begin{align} \label{eq: 5.31}
			u_{\lambda,t}-\Delta u_\lambda&=\lambda^{N+1} a\cdot\nabla\nonumber 
			F(\lambda^{-N} u_\lambda)\nonumber \\
			u_\lambda(0)=u_{0,\lambda}&=\lambda^N u_0(\lambda x)
			\end{align}
			and therefore, for any $\tau>0$ satisfy the integral equation
			\begin{equation} \label{eq: 5.32}
				u_\lambda(t+\tau)= G(t)*u_\lambda (\tau)+\lambda^{N+1}
				\int^t_0\!\!\!\int^t_0 G(t-s)*a\cdot\nabla F\bigl(\lambda^{-N}
				u_\lambda(s+\tau)\bigr)\,ds.
			\end{equation}
			Taking gradients we obtain
			\begin{align} \label{eq: 5.33}
				\nabla u_\lambda(t+\tau)=\nabla
					G(t)*u_\lambda(\tau)\\
				{+\lambda \int^t_0 \nabla G(t-s)*F'\bigl(\lambda^{-N}
					u_\lambda (s+\tau)\bigr)a\cdot\nabla u_\lambda(s+\tau)\,ds.}\nonumber
			\end{align}
			
			Since
			\begin{equation} \label{eq: 5.34}
				\|\nabla G(t)\|_p \le C_p t^{-\np-\frac{1}{2}},\qquad\forall t>0
			\end{equation}
			for all $p\in[1,\infty]$ and $\|u_\lambda(t)\|_1\le
			\|u_{0,\lambda}\|_1=\|u_0\|_1$ for all $\lambda >0$, $t>0$, taking
			 $L^p$ norms in \eqref{eq: 5.33} we obtain
			\begin{align} \label{eq: 5.35}
				\|\nabla u_\lambda (t+\tau)\|_p\le
					C_p\|u_0\|_1 t^{-\np-\frac{1}{2}}  \\
				+\lambda |a|\int^t_0 \|\nabla G(t-s)\|_1\|F'\bigl(\lambda
					^{-N }u_\lambda (s+\tau)\bigr)\|_\infty\|\nabla
					u_\lambda(s+\tau)\|_p\,ds.\nonumber
			\end{align}
			From \eqref{eq: 5.11} we deduce that
			$$\|u_\lambda (t)\|_p \le C_p\|u_0\|_1 t^{-\np},\qquad\forall t\in
			(0,1)$$
			for all $p\in[1,\infty]$ with $C_p>0$ independent of  $\lambda$. Thus
			\begin{equation} \label{eq: 5.36}
				\|u_\lambda (s+\tau)\|_\infty \le C_\infty
				\|u_0\|_1\tau^{-\frac{N}{2}},\qquad\forall s\ge 0. 
			\end{equation}
			
			Combining \eqref{eq: 5.28}, \eqref{eq: 5.34}, \eqref{eq: 5.35} and \eqref{eq: 5.36} we obtain
			\begin{align} \label{eq: 5.37}
				 \|\nabla u_\lambda (t+\tau)\|_p\le
					C_p\|u_0\|_1 t^{-\np-\frac{1}{2}} \\
				 +C|a|\|u_0\|^{1/N}\tau^{-\frac{1}{2}}
					\int^1_0 (t-s)^{-1/2}\|\nabla \nonumber
					u_\lambda (s+\tau)\|_p\,ds.
			\end{align}
			
			We need the following Gronwall Lemma.			
			\begin{lem}
			Let $T>0$, $A,B>0$, $\alpha\in[0,1]$ and $\varphi\in C([0,T])$
				a non-negative function such that
				\begin{equation} \label{eq: 5.38}
					\varphi (t)\le At^{-\alpha}+B \int^t_0 (t-s)^{-1/2}\varphi
					(s)\,ds. 
				\end{equation}
				
				Then, there exists $C>0$ that depends only on $T,\alpha$ and $B$ such that
				\begin{equation} \label{eq: 5.39}
				\varphi (t)\le CA t^{-\alpha}, \qquad\forall t\in(0,T].
				\end{equation}
			\end{lem}
			
			\begin{proof}[Proof of Lemma 4.9]
				Multiplying in \eqref{eq: 5.38} by $t^\alpha$ we obtain
				\begin{equation} \label{eq: 5.40}
					t^\alpha\varphi (t)\le A+Bt\int^t_0 (t-s)^{-1/2}s^{-\alpha}\psi
					(s)\,ds 
				\end{equation}
				with $\psi(s)=\sup_{t\in[0,s]}\{t^\alpha\varphi(t)\}$. Since
				$\psi$ is non-decreasing it holds
				$$t^\alpha\varphi (t) \le A+B^\alpha \psi (t)
				\int^t_0(t-s)^{-1/2}s^{-\alpha}\,ds=A+B C_\alpha t^{1/2}\psi (t) $$
				con ${C_\alpha=\int^1_0(1-s)^{1/2}s^{-\alpha}\,ds}$. Taking supremums in
				this inequality we see that
				$$\psi(t)\le A+B C_\alpha t^{1/2}\psi (t).$$
				therefore, if $t\in[0,t_0]$ with $t_0=(2BC_\alpha)^{-2}$, it holds
				\begin{equation} \label{eq: 5.41}
					\psi (t)\le 2A, \qquad\forall t\in[0,t_0]. 
				\end{equation}
				If $T\le t_0$, \eqref{eq: 5.38} is obtained from \eqref{eq: 5.40}. Si $T>t_0$, take $\delta\le
				t_0$. Decomposing the integral in \eqref{eq: 5.39} in the segments $[0,\delta]$,
				$[\delta,t-\delta]$ and $[t-\delta,t]$ we obtain
				\begin{align}
				t^{\alpha }\varphi (t)&\le
				A+BT^\alpha\delta^{-\alpha-1/2}\int^t_0\psi(s)\,ds\\
				&\quad+Bt^\alpha\psi(t)\biggl\{\int^\delta_0(t-s)^{-1/2}s^{-\alpha}\,ds
				+\int^t_{t-\delta}(t-s)^{-1/2}s^{-\alpha}\,ds\biggr\}\\
				&=A+BT^\alpha\delta^{-\alpha-1/2}\int_0^t\psi(s)\,ds\\
				&\quad +BT^\alpha\biggl\{(t_0-\delta)^{-1/2}\frac{\delta^{1-\alpha}}%
				{(1-\alpha )} +2(t_0-\delta)^{-\alpha}\delta^{1/2}\biggr\}\psi(t).
				\end{align}
				
				Choosing $\delta>0$ sufficiently small such that
				$$BT^\alpha\biggl\{(t_0-\delta )^{-1/2}
				\frac{\delta^{1-\alpha}}{1-\alpha}
				+2(t_0-\delta)^{-\alpha}\delta^{1/2}\biggr\}\le \frac{1}{2} $$
				and taking supremums in this last inequality we obtain
				$$\psi (t)\le 2\biggl(A+BT^\alpha\delta^{-\alpha-1/2}
				\int^t_0\psi(s)\,ds\biggr). $$
				
				Applying the classic Gronwall Lemma and using (4.41) we obtain
				$$\psi(t)\le 2A+4ABT^\alpha\delta^{-\alpha-1/2}t_0\exp
				\bigl(2BT^\alpha\delta^{-\alpha-1/2}(t-t_0)\bigr),\qquad t\in[t_0,T].$$
				This concludes the proof of (4.38).
			\end{proof}
			
			\paragraph*{End of the proof of Theorem 4.4}
			Applying Lemma 4.9 in
			(4.36) with $p\ge1$ such that
			${\frac{N}{2}\biggl(1-\frac{1}{p}\biggr)+\frac12<1}$ (i.e.,
			${p\in\biggl[1,\frac{N}{N-1}\biggr)}$) we deduce that $$\|\nabla
			u_\lambda (t+\tau)\|_p\le C_p\|u_0\|_1t^{-\np-\frac12}, \qquad\forall
			t\in [0,1] $$ and therefore
			\begin{equation} \label{eq: 5.42}
				\|\nabla u_\lambda (2\tau)\|_p\le C(p,\tau)\|u_0\|_1,\qquad \forall
				\lambda >0.
			\end{equation}
			It can easily be shown that (4.41) implies (4.29). Therefore, (4.28) holds for all ${p\in\biggl[1,\frac{N}{N-1}\biggr)}$. On the other hand, from \eqref{eq: 5.28} we deduce that
			\begin{equation} \label{eq: 5.43}
				\|\nabla u_\lambda (t+\tau)\|_p\le C(p,\tau), \qquad\forall t\ge 0,
				\forall \lambda >0,\forall \tau>0,
				\forall p\in\biggl[1,\frac{N}{N-1}\biggr).
			\end{equation}
			
			To prove \eqref{eq: 5.28} for all $p\in [1,\infty]$ we use an iterative argument. Taking $L^p$ norms in \eqref{eq: 5.33} we obtain
			\begin{align} \label{eq: 5.44}
				{\|\nabla_\lambda(t+\tau)\|_p
					\le C_p t^{-\np-\frac12} \|u_0\|_1 } \\
				{+|a|\lambda\int_0^t \|\nabla G(t-s)\|_r \|F'\bigl(\lambda^{-N}
					u_\lambda(s+\tau)\bigr)\|_\infty
					\|\nabla u_\lambda (s+\tau)\|_q\,ds} \nonumber
			\end{align}
			with $r$, $q$ such that \eqn{\frac{1}{r} +\frac{1}{q} = 1+\frac{1}{p}}.
			By virtue of \eqref{eq: 5.43} we can choose $q$ in the interval
			${\biggl[1,\frac{N}{N-1}\biggr)}$. The parameter $r$ will also be chosen in ${\biggl[1,\frac{N}{N-1}\biggr)}$ for guaranteeing the integrability of $\|\nabla G(t-s)\|_r$ in the interval $[0,t]$. In this way we will obtain all the${p\in\biggl[\frac{N}{N-1},\frac{N}{N-2}\biggr)}$ . In these ranges of
			$r$, $q$ and $p$, from \eqref{eq: 5.43} we obtain
			\begin{align} \label{eq: 5.45}
				\|\nabla_\lambda(t+\tau)\|_p
				&\le C_p t^{-\np-\frac12} \|u_0\|_1\\
				&\qquad+|a| C(q,\tau)\int_0^1
				(t-s)^{-\np-\frac12}\,ds \nonumber\\
				&\le C\bigl\{t^{-\np-\frac12} + t^{-\np+\frac12}\bigr\} \nonumber.
			\end{align}
			From \eqref{eq: 5.45} we deduce 
			$$\|\nabla u_\lambda (2\tau)\|_p \le C(p,\tau),
			\qquad\forall p\in\biggl[\frac{N}{N-1},\frac{N}{N-2}\biggr)$$
			which implies \eqref{eq: 5.29} for
			${\in\biggl[\frac{N}{N-1},\frac{N}{N-2}\biggr)}$.
			
			Iterating this argument. we obtain \eqref{eq: 5.29} for all $p\in[1,\infty]$.
		\end{proof}
		
		\chapter{Weakly non-linear asymptotic behavior}
		
		\section{Introduction}
		
		In this chapter, we address the asymptotic behavior of weakly non-linear heat equations such as
		\begin{align} \label{eq: 6.1}
		u_t-\Delta u&= a\cdot\nabla\bigl(F(u)\bigr)
		&\hbox{ in $\R^N\times(0,\infty)$}\\
		u(0)&=u_0 \nonumber 
		\end{align}
		
	The problem consists in proving, under suitable conditions on the nonlinearity $F$, that
		\begin{equation} \label{eq: 6.2}
			t^{\np}\|u(t)-MG(t)\|_p\to 0 \quad \text{ when } \quad  t\to\infty
		\end{equation}
		$G$ being the fundamental solution to the linear heat equation
		\begin{equation} \label{eq: 6.3}
			G(x,t)=(4\pi t)^{-N/2} \exp \biggl(-\frac{|x|^2}{4t}\biggr)  
		\end{equation}
		and $M$ the mass of the solution: ${M=\int u_0(x)\,dx}$. In other words, one seeks to prove that  
		when  $t\to\infty$ the solutions to
		\eqref{eq: 6.1}, as a first approximation behave like the solutions to the
		linear heat equation.
		
		As we mentioned in the introduction, this phenomenon occurs if
		$$F(s)=|s|^{q-1}s$$
		with \eqn{q>1+\frac{1}{N}}. IIn the general case of nonhomogeneous nonlinearity, this phenomenon is expected to occur when  $F$ behaves like $|s|^{q-1}s$ with \eqn{q>1+\frac{1}{N}} in a neighbourhood of 
		$s=0$. In the next section we will prove that \eqref{eq: 6.2} holds true if
		$$\lim_{|s|\to0}\frac{F(s)}{|s|^{(N+1)/N}}=0.$$
		
		These results are optimal when $F(s)=|s|^{1/N} s$, as we see in the next chapter, \eqref{eq: 6.2} is not fulfilled since the solutions to \eqref {eq: 6.1} in that case behave since self-similar solutions of \eqref {eq: 6.1}.
		
		In Section 5.3 we will study the case of the initial data in $L^1(\R^N; 1+|x|) \cap L^\infty(\R^N)$. In this case we can estimate the convergence speed in \eqref{eq: 6.2}. In Chapter I we saw that this is possible for the linear equation.
		
		In \cite{Z2} we obtain the second term in the asymptotic behavior of
		the solutions to \eqref{eq: 6.1}  when  $F(s)=|s|^{q-1}s$ and \eqn{q>1+\frac{1}{N}}. 
		It can be shown that the second term depends on $q$ in a way that it's necessary to distinguish the cases \eqn{q>1+\frac{2}{N}}, \eqn{q=1+\frac{2}{N}} and
		\eqn{\frac{1}{N}<q<1+\frac{2}{N}}.
		
		\section{Initial data in $L^1(\R^N)$}
		
		The main result of this section is the following.		
		\begin{thm}
			Suppose  that the nonlinearity $F\in C^1(\R)$ satisfies
			\begin{equation} \label{eq: 6.4}
				\lim_{|s|\to0}\frac{F(s)}{|s|^{(N+1)/N}}=0.
			\end{equation}
			
			Then, for all $u_0\in L^1(\R^N)$ with \eqn{M=\int u_0(x)\,dx}, the solution to $\eqref{eq: 6.1}$ satisfies $\eqref{eq: 6.2}$ for all $p\in[1,\infty]$.
		\end{thm}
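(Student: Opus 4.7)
My plan is to exploit the Duhamel representation
$$u(t) = G(t)*u_0 + \int_0^t a\cdot\nabla G(t-s) * F(u(s))\,ds,$$
subtract $MG(t)$, and control each resulting piece in $L^p$. The linear part $G(t)*u_0 - MG(t)$ is exactly what Theorem~1.1 yields: $t^{\np}\|G(t)*u_0-MG(t)\|_p\to 0$. Hence the whole problem reduces to showing that the nonlinear Duhamel term $N(t):=\int_0^t a\cdot\nabla G(t-s)*F(u(s))\,ds$ satisfies $t^{\np}\|N(t)\|_p\to 0$. As in the proof of Theorem~1.1, I would first argue for $u_0\in L^1(\R^N)\cap L^\infty(\R^N)$, where Theorem~4.1 keeps all quantities uniformly controlled, and then pass to $L^1$ data by density, using the $L^1$ contraction \eqref{eq: 5.9} together with the uniform decay \eqref{eq: 5.11}.

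The growth hypothesis \eqref{eq: 6.4} translates to: for every $\varepsilon>0$ there exists $\delta>0$ with $|F(s)|\le \varepsilon|s|^{(N+1)/N}$ whenever $|s|\le \delta$. Since Theorem~4.2 gives $\|u(s)\|_\infty\le C s^{-N/2}\|u_0\|_1$, I can pick $t_0=t_0(\varepsilon)$ so large that $\|u(s)\|_\infty\le \delta$ for all $s\ge t_0$, and hence pointwise $|F(u(s,\cdot))|\le\varepsilon|u(s,\cdot)|^{(N+1)/N}$ on $[t_0,\infty)$. I would split $N(t)=N_1(t)+N_2(t)$ along $[0,t_0]$ and $[t_0,t]$. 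For $t\ge 2t_0$, Young's inequality with $\|\nabla G(t-s)\|_p\le C(t-s)^{-\np-1/2}$ and the uniform $L^1$ bound on $F(u(s))$ over $[0,t_0]$ yield $\|N_1(t)\|_p\le C(t_0)\,t^{-\np-1/2}=o(t^{-\np})$.

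For $N_2(t)$, the hypothesis and Theorem~4.2 combined give the decay $\|F(u(s))\|_q\le \varepsilon\|u(s)\|_{q(N+1)/N}^{(N+1)/N}\le\varepsilon C\, s^{-(N+1)/2+N/(2q)}$ for any admissible $q\ge 1$. Choosing $r,q\ge 1$ with $\tfrac{1}{r}+\tfrac{1}{q}=1+\tfrac{1}{p}$ and using $\|\nabla G(t-s)\|_r\le C(t-s)^{-N/2(1-1/r)-1/2}$, Young's inequality leads to an integral of the form $\int_{t_0}^{t}(t-s)^{a}\, s^{b}\,ds$ whose exponents satisfy $a+b=-\np-1$. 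Splitting over $[t_0,t/2]$ and $[t/2,t]$, a standard beta-function computation bounds this by a constant multiple of $t^{-\np}$. Consequently $t^{\np}\|N_2(t)\|_p\le \varepsilon C$, and letting $\varepsilon\to 0$ (equivalently $t_0\to\infty$) concludes $t^{\np}\|N_2(t)\|_p\to 0$.

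The main obstacle is the exponent balancing: the critical growth rate $(N+1)/N$ in \eqref{eq: 6.4} is the unique exponent that makes $\|N_2(t)\|_p$ decay at \emph{exactly} the rate $t^{-\np}$, with a prefactor proportional to $\varepsilon$ so that the smallness of $\varepsilon$ closes the argument. If $F(s)$ were only $O(|s|^{(N+1)/N})$ without the little-$o$, the prefactor would not vanish and $N_2(t)$ would contribute at leading order, which is exactly the self-similar, strongly nonlinear regime of Chapter~6 and shows that the hypothesis is sharp. A secondary technical point is that for large $p$ the balance conditions on $r,q$ may fall outside the natural admissible range; this can be handled by iterating the argument on increasing ranges of $p$, analogously to the boot-strap used in the proof of Theorem~4.8.
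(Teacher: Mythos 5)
Your proposal follows essentially the same route as the paper: Duhamel's formula, Theorem 1.1 for the linear part, and a splitting of the nonlinear term in which the little-$o$ hypothesis \eqref{eq: 6.4} combined with the $L^\infty$ decay of Theorem 4.2 produces the small factor $\varepsilon$ that closes the argument (your worry about iterating in $p$ is unnecessary: the choices $r=1,\ q=p$ on $[t/2,t]$ and $r=p,\ q=1$ on the early interval work for every $p\in[1,\infty]$). The only organizational difference is that the paper avoids your density step --- which would be delicate at $p=\infty$, since the nonlinear flow is only known to contract in $L^1$ --- by restarting the integral equation at time $t=1$, where $u(1)\in L^1(\R^N)\cap L^\infty(\R^N)$ by Theorem 4.3; with that adjustment your argument is the paper's.
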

		
		\begin{proof}
			By Theorem 4.3 in Chapter 4 we know  that $u(1)\in L^1(\R^N)\cap
			L^\infty(\R^N)$. Then $u$ satisfies the integral equation
			$$u(t+1)=G(t)*u(1)
			+\int_0^t a\cdot\nabla G(t-s)*F\bigl(u(s+1)\bigr)\,ds.$$
			
From Theorem 1.1 of Chapter 1 we know that by conservation of the mass, \eqn{\int u(x,1)\,dx=M}, one has
			\begin{equation}
				t^{\np}\|G(t)*u(1)-MG(t)\|_p \to 0,  \quad \text{ when } \quad  t\to\infty
			\end{equation}
			for all $p\in[1,\infty]$. On the other hand
			$$t^{\np}\|G(t+1)-G(t)\|_p\to 0, \quad \text{ when } \quad t\to\infty $$
			so it remains to prove that
			$$t^{\np}\biggl\|\int_0^t a\cdot\nabla
			G(t-s)*F\bigl(u(s+1)\bigr)\,ds\biggr\|_p \to 0.$$
			
			Let
			$$\epsilon(s)=\frac{F(s)}{|s|^{(N+1)/N}}.$$
			
			From \eqref{eq: 6.4} we know that
			$$\epsilon(s)\to 0,  \quad \text{ when }  |s|\to0 $$
			
			It holds
			\begin{align*}
				&{\biggl\|\int_0^t a\cdot\nabla G(t-s)
					*F\bigl(u(s+1)\bigr)\,ds\biggr\|}\\
					&\le\int_0^{t/2} \|a\cdot\nabla G(t-s)\|_p
					\|F\bigl(u(s+1)\bigr)\|_1\,ds\\
					&\qquad+\int_{t/2}^t \|a-\nabla G(t-s)\|_1
					\|F\bigl(u(s+1)\bigr)\|_p\,ds.
			\end{align*}
			
			By means of an explicit computation it can easily be shown that
			$$\|\nabla G(t)\|_p\le C_p t^{-\np-\frac12}.$$
			
			On the other hand
			$$\|F\bigl(u(s)\bigr)\|_p\le \|\epsilon\bigl(u(s)\bigr)\|_\infty
			\|u(s)\|_{p(N+1)/N}^{(N+1)/N}.$$
			
			therefore, using the estimate
			$$\|u(t)\|_p\le C_p t^{-\np} \|u_0\|_1$$
			shown in Theorem 4.3 of Chapter 4 we deduce that
			\begin{align*}
				{\int_{t/2}^t \|a\cdot\nabla G(t-s)\|_1
					\|F\bigl(u(s+1)\bigr)\|_p}\\
				{\begin{cases}
					\le |a| C\int_{t/2}^t (t-s)^{-1/2}
					\|\epsilon\bigl(u(s+1)\bigr)\|_\infty (s+1)^{-\frac{(p(N+1)-N)}{2p}}
					\,ds\\
					\le 2|a| C\delfrac({t}{2})^{-\frac{(p(N+1)-N)}{2p}}
					\delfrac({t}{2})^{1/2} \sup_{s\ge\frac{t}{2}+1}
					\|\epsilon\bigl(u(s)\bigr)\|_\infty\\
					=C'\sup_{s\ge\frac{t}{2}+1} \|\epsilon\bigl(u(s)\bigr)\|_\infty
					t^{-\np}
					\end{cases}}
			\end{align*}
			and since
			$$\lim_{t\to\infty}\sup_{s\ge\frac{t}{2}+1}
			\|\epsilon\bigl(u(s)\bigr)\|_\infty=0$$
			We see that
			$$t^{\np} \int_{t/2}^t \|a\cdot\nabla G(t-s)\|_1
			\|F\bigl(u(s+1)\bigr)\|_p\,ds\to 0, \quad \text{ when } \quad  t\to\infty $$
			
			On the other hand
			\begin{align*}
				&{\int_0^{t/2}\|a\cdot\nabla G(t-s)\|_p
					\|F\bigl(u(s+1)\bigr)\|_1\,ds} \\
				&\le|a| C\int_0^{t/2} (t-s)^{-\np-\frac{1}{2}}
					\|\epsilon\bigl(u(s+1)\bigr)\|_\infty
					(s+1)^{-1/2}\,ds\\
					&\le|a| C\delfrac({t}{2})^{-\np-\frac12}\int_0^{t/2}
					\|\epsilon\bigl(u(s+1)\bigr)\|_\infty(s+1)^{-1/2}\,ds.	
			\end{align*}
			
			therefore it is enough to prove that
			\begin{equation} \label{eq: 6.5}
			t^{-1/2}\int_0^{t/2} \|\epsilon\bigl(u(s+1)\bigr)\|_\infty
			s^{-1/2}\,ds\to 0 \quad  \text{ when } \quad t\to\infty.
			\end{equation}
			
			Fix an arbitrary $\delta>0$. Since $\|\epsilon\bigl(u(t)\bigr)\|_\infty\to
			0$  when  $t\to\infty$, there exists $T>0$ such that
			$$\|\epsilon\bigl(u(s+1)\bigr)\|_\infty\le\delta,\qquad\forall s\ge T$$
			and so
			$$t^{-1/2}\int_T^{t/2}
			\|\epsilon\bigl(u(s+1)\bigr)\|_\infty s^{-1/2}\,ds\le 2\delta t^{-1/2}
			\biggl(\frac{t}{2}-T\biggr)^{1/2}\le \sqrt2 \delta.$$
			
			On the other hand
			$$t^{-1/2}\int_0^T
			\|\epsilon\bigl(u(s+1)\bigr)\|_\infty s^{-1/2}\,ds\to 0 \quad \text{ when } \quad
			t\to\infty. $$
			
			This concludes the proof of \eqref{eq: 6.5} and thus the proof of the theorem.
		\end{proof}
		
		\begin{obs}
		This result was shown in \cite{EZ2} for initial data
			$u_0\in L^1(\R^N)\cap L^\infty(\R^N)$.
		\end{obs}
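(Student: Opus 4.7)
The final assertion is not a mathematical claim requiring proof but a bibliographic remark: it records that an earlier version of Theorem 5.1 appeared in \cite{EZ2} under the stronger hypothesis $u_0\in L^1(\R^N)\cap L^\infty(\R^N)$. There is consequently no theorem to establish. The appropriate ``proof plan'' is therefore a verification plan, together with an explanation of what the present Theorem 5.1 adds to the original statement in \cite{EZ2}.

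\textbf{Verification plan.} To confirm the observation one would consult the original paper \cite{EZ2} and check that the main weak-nonlinearity theorem stated there has exactly three features: (i) the structural assumption on $F$ is the asymptotic decay condition \eqref{eq: 6.4} at the origin, (ii) the conclusion is the convergence \eqref{eq: 6.2} for every $p\in[1,\infty]$, and (iii) the initial data are required to lie in $L^1(\R^N)\cap L^\infty(\R^N)$. Matching these three items with the statement of Theorem 5.1 above confirms that the only difference is the admissible class of initial data.

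\textbf{Relationship to Theorem 5.1.} What Theorem 5.1 adds over \cite{EZ2} is the extension to arbitrary $u_0\in L^1(\R^N)$. The extension is essentially costless given the material of Chapter 4: by Theorem 4.6 and the decay estimate \eqref{eq: 5.11}, an initial datum $u_0\in L^1(\R^N)$ produces a unique solution with $u(1)\in L^1(\R^N)\cap L^\infty(\R^N)$ and the same mass $M=\int u_0\,dx$. Accordingly, the proof of Theorem 5.1 given above starts the analysis at $t=1$ and from that moment on reduces the question precisely to the framework of \cite{EZ2}. The observation is flagging exactly this point: the bulk of the asymptotic argument is the one in \cite{EZ2}, while the $L^1$ well-posedness theory developed here permits one to strip away the $L^\infty$ assumption on the initial data.

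\textbf{Main (non-)obstacle.} Since nothing needs to be proved, the only ``difficulty'' is verifying that the hypotheses in \cite{EZ2} are indeed equivalent to \eqref{eq: 6.4}; should the original paper state them in a slightly different but equivalent form (for instance in terms of a power-type upper bound near the origin together with local Lipschitz regularity of $F$), one would record the elementary equivalence in a footnote.
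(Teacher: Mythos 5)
You are right that this is a bibliographic remark with no proof in the paper, and your account of what Theorem 5.1 adds — removing the $L^\infty$ hypothesis by starting the argument at $t=1$, where the $L^1$ well-posedness theory and the decay estimate \eqref{eq: 5.11} give $u(1)\in L^1(\R^N)\cap L^\infty(\R^N)$ — is exactly how the paper's proof of Theorem 5.1 proceeds. Nothing further is required.
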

		
		\section{Initial data in $L^1(\R^N; 1+|x|)\cap L^\infty (\R^N)$}
		
		In order to estimate the rate of convergence in \eqref{eq: 6.2}, as for linear heat equation, it must be assumed that the initial data has a certain decay at infinity. In the linear case it was enough to suppose $u_0\in L^1(\R^N;1+|x|)$. In the nonlinear case we need  $u_0\in L^1(\R^N; 1+|x|)\cap L^\infty (\R^N)$. On the other hand, it will be necessary to know the behavior of the non linearity in a neighbourhood of the origin.
		
		The following result holds.
		\begin{thm}
			Suppose  that $F\in C^1(\R)$ satisfies
			\begin{equation} \label{eq: 6.6}
				|F(s)|\le c|s|^q, \qquad\forall s\in\R:|s|\le 1
			\end{equation}
			with \eqn{q>1+\frac{1}{N}}. Let $u_0\in L^1(\R^N; 1+|x|)\cap L^\infty
			(\R^N)$ be such that \eqn{\int u_0(x)\,dx=M}.
			
			Then
			\begin{equation} \label{eq: 6.7}
				\|u(t)-MG(t)\|_p\le C_p t^{-\np} p(t),\qquad\forall t\ge 1
			\end{equation}
			with
			\begin{equation} \label{eq: 6.8}
			p(t)=\begin{cases}
			-t^{1/2}& q>1+\frac{2}{N}\\
			t^{-1/2} \log (t+2)&q=1+\frac{2}{N}\\
			t^{-(N(q-1)-1)/2}&1+\frac{1}{N}<q<1+\frac{2}{N}.
			\end{cases}
			\end{equation}   
		\end{thm}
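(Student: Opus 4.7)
The plan is to start from the Duhamel (integral) representation
\begin{equation*}
u(t) = G(t)*u_0 + \int_0^t a\cdot\nabla G(t-s)*F\bigl(u(s)\bigr)\,ds,
\end{equation*}
so that
\begin{equation*}
u(t)-MG(t) = \bigl[G(t)*u_0-MG(t)\bigr] + \int_0^t a\cdot\nabla G(t-s)*F\bigl(u(s)\bigr)\,ds.
\end{equation*}
The first bracket is controlled directly by Lemma 1.2(b), which yields the bound $C'_p\,t^{-\np-1/2}\|u_0\|_{L^1(\R^N;|x|)}$. This decay $t^{-1/2}$ dominates the function $p(t)$ in all three cases, so only the Duhamel integral requires real work.

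For the nonlinear integral I would split it at $t/2$ and use Young's inequality with different pairs of exponents on each piece. On $[t/2,t]$ I use $\|\nabla G(t-s)\|_1\|F(u(s))\|_p$, and on $[0,t/2]$ I use $\|\nabla G(t-s)\|_p\|F(u(s))\|_1$, relying on the standard bounds $\|\nabla G(\tau)\|_r\le C_r\tau^{-\frac{N}{2}(1-1/r)-1/2}$. The hypothesis \eqref{eq: 6.6} combined with Theorem 4.2 provides the key pointwise control: since $\|u(s)\|_\infty\le C\|u_0\|_1 s^{-N/2}$, there exists $s_0>0$ such that $|u(s,x)|\le 1$ for $s\ge s_0$, and then $|F(u(s))|\le c|u(s)|^q$ pointwise. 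Combined with Theorem 4.2 this gives
\begin{equation*}
\|F(u(s))\|_1 \le C\|u(s)\|_q^q \le C s^{-N(q-1)/2},\qquad
\|F(u(s))\|_p \le C\|u(s)\|_{pq}^q \le C s^{-N(q-1/p)/2},
\end{equation*}
for $s\ge s_0$, while for $s\in[0,s_0]$ the local Lipschitz character of $F$ together with $\|u(s)\|_1\le\|u_0\|_1$ and $\|u(s)\|_\infty\le\|u_0\|_\infty$ yields a uniform bound on $\|F(u(s))\|_1$ and $\|F(u(s))\|_p$.

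Plugging these into the two pieces, the $[t/2,t]$ integral contributes (using $(t-s)^{-1/2}$ and $s\ge t/2$) a term of order $t^{-\np} t^{(1-N(q-1))/2}$, while the $[0,t/2]$ integral factors as $t^{-\np-1/2}$ times $\int_0^{t/2}\|F(u(s))\|_1\,ds$. The latter integral behaves like: a constant when $N(q-1)/2>1$ (i.e.\ $q>1+2/N$), like $\log t$ when $N(q-1)/2=1$ (i.e.\ $q=1+2/N$), and like $t^{1-N(q-1)/2}$ when $N(q-1)/2<1$ (i.e.\ $q<1+2/N$). A straightforward comparison with $t^{-\np}p(t)$ shows that the three regimes of $p(t)$ in \eqref{eq: 6.8} emerge exactly from this trichotomy, and in each regime one of the two pieces matches $p(t)$ while the other is dominated by it.

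The main obstacle is bookkeeping rather than any single hard estimate: one must juggle two different exponent choices in Young's inequality, handle the small-$s$ regime where $|u|$ need not be $\le 1$ separately from the large-$s$ regime, and verify that the worst of the two pieces in each of the three ranges of $q$ reproduces the stated $p(t)$. The delicate borderline case is $q=1+2/N$, where the marginal divergence of $\int s^{-N(q-1)/2}\,ds$ is what produces the logarithmic factor, so one must truncate that integral below at $s_0$ (using the boundedness of $u_0$) to avoid a spurious contribution from $s=0$.
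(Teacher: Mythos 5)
Your proposal is correct and follows essentially the same route as the paper: Duhamel's formula, Lemma 1.2(b) for the linear part, the split of the nonlinear integral at $t/2$ with the two Young pairings, the $L^p$ decay from Theorem 4.2, and the trichotomy arising from the convergence, marginal divergence, or divergence of $\int_0^{t/2}(s+1)^{-N(q-1)/2}\,ds$. The only cosmetic difference is that you truncate at a time $s_0$ after which $\|u(s)\|_\infty\le 1$, whereas the paper simply extends \eqref{eq: 6.6} to $|F(s)|\le C_R|s|^q$ for $|s|\le R$ using the uniform bound on $u$; both devices are equivalent.
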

		
		\begin{obs}
			The estimate of $p$ is dependent on \eqn{q>1+\frac{2}{N}},
			\eqn{q=1+\frac{2}{N}}, \'o \eqn{1+\frac{1}{N}<q<1+\frac{2}{N}}.
			In \cite{Z2} it is shown that these estimates are optimal.
		\end{obs}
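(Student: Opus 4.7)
The plan is to use the Duhamel integral formulation
$$u(t) = G(t)*u_0 + \int_0^t a\cdot\nabla G(t-s)*F(u(s))\,ds,$$
combined with Lemma 1.2(b) for the linear part and the $L^p$ decay from Theorem 4.2 for the nonlinear part. Subtracting $MG(t)$ and using $M=\int u_0$ decomposes the estimate into
$$u(t)-MG(t)=\bigl[G(t)*u_0-MG(t)\bigr]+\int_0^t a\cdot\nabla G(t-s)*F(u(s))\,ds.$$
Lemma 1.2(b) bounds the first bracket in $L^p$ by $C'_p\|u_0\|_{L^1(\R^N;|x|)}\,t^{-\np-\frac12}$, which is of order at worst $t^{-\np}p(t)$ in each of the three regimes of \eqref{eq: 6.8}.

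The main work is the nonlinear Duhamel term. First I would upgrade the local assumption $|F(s)|\le c|s|^q$ for $|s|\le 1$ to a global pointwise bound along the orbit: since $F\in C^1(\R)$ and $\|u(t)\|_\infty\le\|u_0\|_\infty$ by Theorem 4.1, for $|u|\in[1,\|u_0\|_\infty]$ we have $|F(u)|$ bounded by a constant and $|u|^q\ge 1$, so enlarging the constant gives $|F(u(x,t))|\le\tilde c\,|u(x,t)|^q$ uniformly. Combined with Theorem 4.2 this yields, for every $r\in[1,\infty]$,
$$\|F(u(s))\|_r\le\tilde c\,\|u(s)\|_{qr}^q\le C_{q,r}\,(1+s)^{-\frac{N(qr-1)}{2r}}.$$
I then split the Duhamel integral at $s=t/2$ and apply Young's inequality. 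On $[t/2,t]$, pairing $\|\nabla G(t-s)\|_1\le C(t-s)^{-1/2}$ with $\|F(u(s))\|_p\le C t^{-\frac{N(qp-1)}{2p}}$ (using $s\ge t/2$) and integrating yields a contribution of order $t^{-\np-\frac12+1-\frac{N(q-1)}{2}}$. On $[0,t/2]$, the estimate $\|\nabla G(t-s)\|_p\le C_p(t-s)^{-\np-\frac12}\le C_p(t/2)^{-\np-\frac12}$ factors out $t^{-\np-\frac12}$ and reduces the analysis to
$$\int_0^{t/2}(1+s)^{-\frac{N(q-1)}{2}}\,ds.$$
This single integral produces the three regimes of $p(t)$: it is uniformly bounded when $\frac{N(q-1)}{2}>1$; grows like $\log(t+2)$ at the critical exponent $q=1+\frac{2}{N}$; and grows like $t^{\,1-\frac{N(q-1)}{2}}$ when $1<N(q-1)<2$. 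Assembling the linear bound, the $[t/2,t]$ bound, and the $[0,t/2]$ bound in each regime then gives precisely \eqref{eq: 6.7}--\eqref{eq: 6.8}.

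The main technical delicacy will be matching the two halves of the split at the critical exponent $q=1+\frac{2}{N}$, where the logarithmic factor first appears and both halves are marginal. A secondary but essential point is the passage from the local assumption on $F$ near the origin to a pointwise bound valid along the entire orbit of $u$: this is where the hypothesis $u_0\in L^\infty$ is genuinely used, both to invoke Theorem 4.1 for the $L^\infty$ control of $u(t)$ and to justify the interpolation-style estimate of $\|F(u(s))\|_r$ uniformly in $s>0$.
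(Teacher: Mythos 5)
Your proposal is correct and follows essentially the same route as the paper's proof of the theorem to which this observation refers: the Duhamel decomposition with Lemma 1.2(b) for the linear part, the upgrade of the local bound $|F(s)|\le c|s|^q$ to a bound along the orbit via the $L^\infty$ control of $u$, the split of the nonlinear term at $s=t/2$ with $\|\nabla G(t-s)\|_1\,\|F(u(s))\|_p$ on $[t/2,t]$ and $\|\nabla G(t-s)\|_p\,\|F(u(s))\|_1$ on $[0,t/2]$, and the three regimes of $p(t)$ arising from $\int_0^{t/2}(1+s)^{-N(q-1)/2}\,ds$. The only point the paper addresses that you do not is the optimality assertion, which is not proved here either but merely attributed to \cite{Z2}.
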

		
		\begin{proof}
			The solution $u$ to \eqref{eq: 6.1} satisfies the integral equation 
			$$u(t)=G(t)*u_0 + \int_0^t a\cdot\nabla G(t-s)*F\bigl(u(s)\bigr)\,ds$$
			
			As $u_0\in L^1(\R^N;1+|x|)$, by Lemma 1.2 of Chapter 1 we know that
			$$\|G(t)*u_0-MG(t)\|_p \le C\|u_0\|_{L^1(\R;|x|)} t^{-\np-\frac12}.$$
			
			Thus it is enough to prove that			
			$$\biggl\|\int_0^t a\cdot\nabla G(t-s)*F\bigl(u(s)\bigr)\,ds\biggr\|_p
			\le Ct^{-\np} p(t).$$
			
			We will proceed, like in the proof of Theorem 5.1, by decomposing the integral in the subintervals $[0,t/2]$ and $[t/2,t]$. As $u_0\in
			L^1(\R^N)\cap L^\infty(\R^N)$ it holds
			$$\|u(t)\|_p\le C_p(t+1)^{-\np},\qquad \forall t>0$$
			for all $p\in[1,\infty]$. In particular $u\in
			L^\infty\bigl(\R^N\times(0,\infty)\bigr)$.
			
			On the other hand, from \eqref{eq: 6.6} we deduce  
			$$\forall R>0, \exists C_R>0: |F(s)|\le C_R |s|^q, \qquad\forall s\in\R:
			|s|\le R.$$
			
			therefore, in the interval $[t/2,t]$ it holds
			\begin{align*}
				\int_{t/2}^t \|a\cdot\nabla G(t-s)\|_1
				\|F\bigl(u(s)\bigr)\|_p\,ds
				&\le|a| C\int_{t/2}^t (t-s)^{-1/2} \|u(s)\|_{pq}^q\,ds\\
				&\le|a|C\int_{t/2}^t(t-s)^{-1/2} (s+1)^{-N(pq-1)/2p}\,ds\\
				&\le|a|Ct^{-N(pq-1)/2p} t^{1/2}\\
				&=|a|Ct^{-\np}t^{-(N(q-1)-1)/2}.
			\end{align*}
			
			It can easily be shown that 
			$$t^{-(N(q-1)-1)/2}\le Cp(t)$$
			for all \eqn{q>1+\frac{1}{N}} and $t\ge 1$.
			
			In the interval $[0,t/2]$ we have
			\begin{align*}
				{\int^{t/2}_t \|a\cdot\nabla G(t-s)\|_p
					\|F\bigl(u(s)\bigr)\|_1\,ds}\\
				{\begin{cases}
					\le|a| C\int^{t/2}_t (t-s)^{-\np-\frac12} \|u(s)\|_q^q\,ds\\
					\le|a|C\delfrac({t}{2})^{-\np-\frac12}\int_0^{t/2}
					(s+1)^{-(N/2)(q-1)}\,ds.\end{cases}}
			\end{align*}
			
			On the other hand,
			$$t^{-1/2}\int_0^{t/2} (s+1)^{-(N/2)(q-1)}\,ds\le Cp(t)$$
			for all \eqn{q>1+\frac{1}{N}} and $t\ge 1$. This proves the theorem.
		\end{proof}
		
		\begin{obs}
			\begin{enumerate}
				\item[1.---]
				In in case when
				$$|F(s)|\le C|s|^q,\qquad\forall s\in\R$$
				Theorem 5.2 holds for initial data $u_0\in
				L^1(\R^N; 1+|x|)\cap L^q (\R^N)$.
				
				\item[2.---]
				When \eqn{q>1+\frac{2}{N}}, we obtain the same convergence speed as in the linear heat equation case (see Lemma 1.2 of Chapter I) as $p(t)=t^{-1/2}$.
				
				\item[3.---]
				The results  can be generalized to any nonlinearity satisfying
				$$|F(s)|\le C|s|^{(N+1)/N}\epsilon(s)$$
				with $\epsilon(\cdot)$ continuous and non-decreasing such that	 $\epsilon(s)\to0$. In this case we obtain \eqref{eq: 6.7} with
				$$p(t)=\max\biggl\{\epsilon\biggl(\biggl(\frac{t}{2}+1\biggr)^{-N/2}
				\biggr),t^{-1/2}\int_0^{t/2}\epsilon\bigl((s+1)^{-N/2}\bigr)
				(s+1)^{-1/2}\,ds\biggr\}.$$
			\end{enumerate}
		\end{obs}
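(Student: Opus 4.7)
The plan is to revisit the two-piece Duhamel split used in the proof of Theorem 5.2 and adapt it to each of the three claims in the observation.

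\emph{Item 1.} The hypothesis $u_0\in L^\infty$ entered the proof of Theorem 5.2 only to guarantee, via Theorem 4.3, pointwise boundedness of $u$, so that the \emph{local} bound $|F(s)|\le C_R|s|^q$ for $|s|\le R$ could be applied. If instead $|F(s)|\le C|s|^q$ holds \emph{globally}, then boundedness of $u$ becomes irrelevant and only $L^r$ estimates on $u$ are needed. For the subinterval $[t/2,t]$ the required bound $\|F(u(s))\|_p\le C\|u(s)\|_{pq}^q$ follows from Theorem 4.3 applied to the restarted solution with data $u(1)\in L^1\cap L^\infty$. For the subinterval $[0,t/2]$ one needs $\|F(u(s))\|_1\le C\|u(s)\|_q^q$; here I would first establish $\|u(s)\|_q\le\|u_0\|_q$ for all $s\ge 0$ by approximating $u_0\in L^1\cap L^q$ by smooth $L^1\cap L^\infty$ data, invoking \eqref{eq: 5.21} for each approximant, and passing to the limit via the $L^1$ contraction \eqref{eq: 5.9} together with Fatou's lemma. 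The remaining computation in the proof of Theorem 5.2 then goes through verbatim.

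\emph{Item 2.} This is a direct reading of the first case in \eqref{eq: 6.8}: when $q>1+\frac{2}{N}$, the function $p(t)$ reduces to $t^{-1/2}$, which is exactly the convergence rate for the linear heat equation provided by Lemma 1.2 of Chapter~1.

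\emph{Item 3.} Writing $|F(s)|\le C|s|^{(N+1)/N}\epsilon(|s|)$ and using that $\epsilon$ is non-decreasing, I would bound pointwise
\[
|F(u(s,x))|\le C\,\epsilon(\|u(s)\|_\infty)\,|u(s,x)|^{(N+1)/N}.
\]
From Theorem 4.3 we have $\|u(s)\|_\infty\le C(s+1)^{-N/2}$, hence $\epsilon(\|u(s)\|_\infty)\le \epsilon\bigl(C(s+1)^{-N/2}\bigr)$. Substituting this into the two pieces of the Duhamel split and running the same estimates as in Theorem 5.2 produces a contribution of order $\epsilon\bigl((t/2+1)^{-N/2}\bigr)\,t^{-\np}$ from $[t/2,t]$ and a contribution of order $t^{-\np-\frac{1}{2}}\int_0^{t/2}\epsilon\bigl((s+1)^{-N/2}\bigr)(s+1)^{-1/2}\,ds$ from $[0,t/2]$, whose maximum is precisely the $p(t)$ stated in the observation.

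The main obstacle is item~1: extending the conservation-of-$L^q$ bound $\|u(t)\|_q\le\|u_0\|_q$ to data that lie only in $L^1\cap L^q$, where the existing theory (Theorems 4.1--4.3) gives $L^1$ contraction but no direct $L^q$ control off $L^\infty$. This reduces to constructing a sequence of smooth $L^1\cap L^\infty$ approximants that is uniformly bounded in $L^q$ and verifying that the $L^1$-limit inherits the $L^q$ bound by lower semicontinuity of the norm under pointwise a.e.\ convergence along a subsequence.
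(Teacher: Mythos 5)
The paper states this Observation without proof, so there is no ``paper's own argument'' to compare against; your reconstruction is the natural and intended one, namely re-running the Duhamel splitting of Theorems 5.1 and 5.2 under each modified hypothesis. All three items check out, including the one genuinely nontrivial point you isolate in item 1 --- extending $\|u(t)\|_q\le\|u_0\|_q$ to data in $L^1\cap L^q$ via truncation, the $L^1$-contraction \eqref{eq: 5.9} and Fatou --- which is handled correctly.
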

		
		\chapter{Self-similar asymptotic behavior}
		
		\section{Introduction}
		
In this chapter we address the self-similar asymptotic behavior of the system
		\begin{align} \label{eq: 7.1}
		u_t-\Delta u&=a\cdot\nabla\bigl(F(u)\bigr)
		&\hbox{en $\R^N\times(0,\infty)$}\\
		u(0)&=u_0. \nonumber
		\end{align}
		It consists in finding a one-parameter family
		$$\{u_M\}_{M\in\R}=\biggl\{t^{-N/2}f_M\delfrac({x}{\sqrt{t}})
		\biggr\}_{M\in\R}$$
		of solutions to \eqref{eq: 7.1} such that ${\int f_M(x)\,dx=M}$ satisfies
		\begin{equation} \label{eq: 7.2}
			t^{\np}\|u(t)-u_M(t)\|_p\to 0  \quad \text{ when } \quad  t\to\infty   
		\end{equation}
		for $p\in[1,\infty]$.
		
		As we saw in the introduction, when the nonlinearity is homogeneous, $F(s)=|s|^{q-1}s$,
		self-similar solutions can only exist when		\eqn{q=1+\frac{1}{N}}.
		
		In this section we show that if \eqn{q=1+\frac{1}{N}}, the
		self-similar solutions exist and \eqref{eq: 7.2} holds for the general solution
We will work with the self-similar variables introduced in Chapter III, in which the self-similar profiles become stationary solutions of a new convection-diffusion equation, and \eqref{eq: 7.2} is equivalent to the convergence of the paths to these stationary states. The existence of stationary solutions (self-similar profiles) of the new equation and \eqref{eq: 7.2} will be a consequence of an abstract result for dynamic systems with adequate properties of compactness and dissipativity. In order to ensure the compactness of the trajectories, we will work in the weighted Sobolev spaces presented in Chapter III. The dissipative property necessary to apply the aforementioned abstract result will be a consequence of the $L^1(\R^N)$ contraction property which is satisfied by the semigroup associated to \eqref{eq: 7.1}.
		
		In the case of a more general nonlinearity we will see that
		\begin{equation} \label{eq: 7.3}
		\lim_{|s|\to 0}
		\frac{F(s)}{|s|^{1/N}s}=1 
		\end{equation}
		the solutions to \eqref{eq: 7.1} behave like the self-similar solutions of problem \eqref{eq: 7.1} with homogeneous nonlinearity $F(s)=|s|^{1/N}s$. 
		To demonstrate this results.we will use a stability theorem for perturbed dynamical systems due toV.~Galaktionov and J.~L.~Vázquez
		\cite{GaV}.
		
This chapter is organized as follows. In section 7.2 we will state the main results of this chapter.
In section 7.3, we will state and prove the abstract results on the existence of steady states and stability of dynamical systems. In section 7.4 we will obtain estimates for solutions in the weighted Sobolev spaces. In section 7.5 we will prove the main result. We will conclude this chapter by proving the asymptotic self-similar behavior for nonlinearities
which satisfy \eqref{eq: 7.3}.
		
		\section{Statement of the main results}
		
		Let us consider the equation
		\begin{equation} \label{eq: 7.4}
		u_t-\Delta u=a\cdot\nabla (|u|^{1/N}u) \text{ in } \R^N\times(0,\infty).
		\end{equation}
		As we saw in the introduction, if $u=u(x,t)$ is a self-similar solution to \eqref{eq: 7.4}, i.e.,
		$$u(x,t)=t^{-N/2}f\delfrac({x}{\sqrt{t}})$$ 
		the profile $f=f(y)$ has to satisfy the elliptic equation	
		\begin{equation} \label{eq: 7.5}
			-\Delta f-\frac{y\cdot\nabla f}{2} - \frac{N}{2} f=a\cdot\nabla
			(|f|^{1/N}f) \text{ in } \R^N.
		\end{equation}
		The following result on existence of self-similar solutions holds.
		\begin{thm}[\cite{AEZ}]
			For every $M\in\R$ there exists a unique profile $f_M\in L^1(\R)$
			solution to \eqref{eq: 7.5} such that
			$$\int f_M(y)\,dy=M.$$
			
			Moreover,
			\begin{itemize}
				\item[(i)]
				$f_M\in H^2(K)\cap C^\infty(\R^N)\cap W^{2,p}(\R^N), \qquad\forall
				p\in [1,\infty)$;
				
				\item[(ii)]
				$f_M>0$ (resp.\ $<0$) in $\R^N$ if $M>0$ (resp.\ $M<0$);
				
				\item[(iii)]
				$f_{M_1}>f_{M_2}$ in $\R^N$ if $M_1>M_2$.
			\end{itemize}
		\end{thm}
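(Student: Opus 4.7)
The plan is to obtain existence of $f_M$ via a fixed-point argument for the Ornstein--Uhlenbeck operator $\mathcal{L} := L - \tfrac{N}{2}I$ analyzed in Proposition 3.1, and then to derive the structural properties from the comparison principle of Theorem 4.3 together with standard elliptic regularity. By Proposition 3.1(v), $\ker \mathcal{L} = \Span\{\varphi_1\}$ with $\varphi_1(y) \propto e^{-|y|^2/4}$, and the remaining spectrum of $\mathcal{L}$ on $E_1^\perp \subset L^2(K)$ starts at $1/2$, so the restriction $\mathcal{L}|_{E_1^\perp}$ is invertible with bounded inverse $\mathcal{L}^{-1} : E_1^\perp \to E_1^\perp \cap H^2(K)$. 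Since $\varphi_1 K$ is constant, membership in $E_1^\perp$ with respect to $(\cdot,\cdot)_K$ reduces to the zero-mean condition $\int g\,dy = 0$, and any divergence of an $L^1$ field satisfies this. Thus the right-hand side of \eqref{eq: 7.5} always lies in the range of $\mathcal{L}$.

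Fix $M \in \R$ and consider
\[
\Phi(w) := c_M(w)\,\varphi_1 \;+\; \mathcal{L}^{-1}\bigl(a \cdot \nabla(|w|^{1/N}w)\bigr),
\]
with $c_M(w)$ chosen so that $\int \Phi(w)\,dy = M$. Any fixed point of $\Phi$ solves \eqref{eq: 7.5} with mass $M$. The compact inclusion $H^1(K) \hookrightarrow L^2(K)$ (Proposition 3.1(ii)) and the local smoothness of $s \mapsto |s|^{1/N}s$ away from zero ensure that $\Phi$ is continuous and compact on bounded subsets of $H^1(K) \cap L^\infty(\R^N)$. For small $|M|$ a Banach contraction argument delivers $f_M$ directly; for general $M$, I would apply Schauder's theorem after extracting a priori estimates from the divergence structure of the nonlinearity---testing against $f$ annihilates the convection through
\[
\int a \cdot \nabla(|f|^{1/N}f)\,f\,dy \;=\; -\int a \cdot \nabla G(f)\,dy \;=\; 0,
\]
where $G$ is an antiderivative of $s \mapsto |s|^{1/N}s$---combined with the Gaussian weight control inherent in $H^1(K)$ and the mass constraint.

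The regularity claim (i) then follows by elliptic bootstrap on $\mathcal{L} f_M = a \cdot \nabla(|f_M|^{1/N}f_M)$, using the Sobolev embeddings of $H^m(K)$ and the Gaussian decay supplied by Proposition 3.1(i). Claims (ii) and (iii) follow by interpreting $f_M$ as a stationary point of the self-similar semigroup $S(s)$ associated to \eqref{eq: 4.6}: through the inverse change of variables, $S(s)$ inherits the comparison principle of Theorem 4.3, so starting from nonnegative (respectively, ordered) initial data of mass $M$ and passing to the $L^1$-limit as $s \to \infty$ transfers positivity and monotonicity in $M$ to the limit, with strict positivity furnished by the strong maximum principle applied to $\mathcal{L}$. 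Uniqueness follows in the same spirit: two steady states of the same mass would provide two distinct stationary points for the order-preserving semigroup $S(s)$ whose coexistence is ruled out by combining the $L^1$-contraction of Theorem 4.3 with the strict $H^1(K)$-dissipation afforded by the spectral gap of $\mathcal{L}$ on $E_1^\perp$.

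The principal obstacle is uniqueness. Since \eqref{eq: 7.5} is non-variational and its linearization has a nontrivial kernel, neither a minimization argument nor a direct pointwise comparison between two hypothetical solutions of the same mass is available, and the $L^1$-contraction is only non-strict, so it does not distinguish two equimass steady states on its own. Closing the uniqueness gap thus requires marrying the dissipative structure of the full parabolic flow (Theorem 4.3) with the spectral properties of $\mathcal{L}$ (Proposition 3.1), which is precisely the dynamical-systems argument the chapter proposes to develop in Sections 7.3--7.5.
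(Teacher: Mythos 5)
There is a genuine gap, on two counts. First, your a priori estimate for the Schauder step is computed in the wrong pairing. The cancellation $\int a\cdot\nabla\bigl(G(f)\bigr)\,dy=0$ holds in the unweighted $L^2$ duality, but in that duality the operator $\mathcal{L}=L-\tfrac{N}{2}I$ is not coercive: integrating by parts gives $(\mathcal{L}f,f)_{L^2(dy)}=\|\nabla f\|_2^2-\tfrac{N}{4}\|f\|_2^2$, which has no sign. The coercive pairing is the $K$-weighted one, where $(\mathcal{L}w,w)_K=\|\nabla w\|_K^2-\tfrac{N}{2}\|w\|_K^2\ge\tfrac{1}{N+1}\|\nabla w\|_K^2$ on $E_1^\perp$ (this is \eqref{eq: 7.28}); but testing the convection term against $fK$ produces $-\int G(f)\,\tfrac{a\cdot y}{2}K\,dy$, which does not vanish. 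In the paper this term is controlled in the parabolic setting (see \eqref{eq: 7.27}) only by invoking the a priori $L^\infty$ bound on $v$ supplied by the decay estimates of Chapter 4 — a bound you do not have for a hypothetical solution of the elliptic equation produced by Schauder's theorem. So the existence argument does not close as written.

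Second, and more importantly, the uniqueness is not proved but only identified as a difficulty. You correctly observe that the $L^1$-contraction is non-strict and cannot by itself separate two equimass steady states, but the "spectral gap of $\mathcal{L}$ on $E_1^\perp$" does not upgrade it to strict contraction for the nonlinear flow: that upgrade is precisely the content of Proposition 6.6, whose proof requires (a) the backward uniqueness Lemma 6.7 (via Ghidaglia / Lions--Malgrange) to show the sets $\{v>w\}$ and $\{v<w\}$ stay nonempty, and (b) a unique continuation argument in which one integrates the equation along the convection direction $a$ to remove the derivative from the bounded potential $b$ before applying H\"ormander's theorem, together with Lemma 6.8 and the Sard-lemma identity \eqref{eq: 7.43}. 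None of this machinery appears in your proposal. Note also the structural difference: the paper explicitly declines to reproduce the fixed-point proof of \cite{AEZ} and instead obtains existence and uniqueness \emph{simultaneously} from the abstract dynamical Theorem 6.3, verified through Proposition 6.4 (compactness of trajectories in $L^2(K)\cap L^\infty$) and Proposition 6.6 (strict $L^1$ contraction); your treatment of (ii) and (iii) via the comparison principle, passage to the limit $s\to\infty$, and the Hopf maximum principle does match the paper's Steps 1 and 3.
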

		
		This result is shown in \cite{AEZ} by means of a fixed-point argument applied to the elliptic equation \eqref{eq: 7.5}. In this chapter we will not reproduce that proof, but that we will adopt the dynamical point of view of \cite{Z1}.
		
		Let
		$$u_M(x,t)=t^{-N/2}f_M\delfrac({x}{\sqrt{t}})$$
		where $f_M$ is the self-similar profile of Theorem 6.1.
		
		It can easily be shown that 
		$$u_M(0)=M\delta$$
		$\delta$ being the Dirac mass, i.e.,
		$$\int u_M(x,t)\varphi(x)\,dx\to M\varphi(0),  \quad \text{ when } \quad 
		t\to 0^+ $$
		for all $\varphi\in \BC(\R^N)$.
		
		Regarding the asymptotic behavior of the general solution to \eqref{eq: 7.4}, we have the following result:
		\begin{thm}
			Let $u_0\in L^1(\R^N)$ be such that
			$$\int u_0(x)\,dx=M$$
			and let $u=u(x,t)$ be the solution to $\eqref{eq: 7.4}$ such that $u(0)=u_0$.
			
			Then $\eqref{eq: 7.2}$ holds for all $p\in[1,\infty]$.
		\end{thm}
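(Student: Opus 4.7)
The plan is to reformulate the convergence statement in the self-similar variables $y = x/\sqrt{t+1}$, $s = \log(t+1)$ introduced in Chapter 3, setting $v(y,s) = e^{Ns/2} u(e^{s/2}y, e^s-1)$ so that $v$ satisfies
\[
v_s + Lv - \frac{N}{2}v = a \cdot \nabla(|v|^{1/N}v), \qquad v(0) = u_0,
\]
with $L = -\Delta - \tfrac{1}{2} y \cdot \nabla$. The self-similar profile $f_M$ becomes a stationary solution of this transformed equation, and the conclusion \eqref{eq: 7.2} is equivalent to $v(s) \to f_M$ in $L^p(\R^N)$ as $s \to \infty$. This reduces the theorem to a statement about the asymptotics of orbits of a semigroup on a suitable phase space.

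Next I would set up the phase space $L^2(K)$. By a density argument based on the $L^1$ contraction of Theorem 4.3, it suffices to treat initial data in $L^1(\R^N) \cap L^2(K)$. Multiplying the transformed equation by $vK$ and integrating, the convective term is handled by integration by parts against the weight, and combining the resulting energy identity with the decay estimates for $\|u(t)\|_p$ from Theorem 4.2 and the gradient estimates of Theorem 4.8, I would derive a uniform-in-$s$ bound $v \in L^\infty(1,\infty; H^1(K))$. The compact embedding $H^1(K) \hookrightarrow L^2(K)$ of Proposition 3.1(ii) then makes the trajectory $\{v(s)\}_{s \geq 1}$ precompact in $L^2(K)$, so the $\omega$-limit set $\omega(u_0)$ in $L^2(K)$ is nonempty, compact and invariant under the semigroup.

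The core step is a dynamical systems argument identifying $\omega(u_0)$. Standard parabolic regularity in the weighted space shows that any $w \in \omega(u_0)$ is a solution of the profile equation \eqref{eq: 7.5}, and conservation of mass for \eqref{eq: 7.4} transfers to $\int w = M$. To pin down a single element, I would invoke the $L^1$ contraction of Theorem 4.3: if $u^1, u^2$ are two solutions of \eqref{eq: 7.4}, then $t \mapsto \|u^1(t) - u^2(t)\|_1$ is non-increasing, and comparing a solution with a time-translate of itself yields a Lyapunov function in the self-similar frame whose level sets force $\omega(u_0)$ to be a single point. Carrying out this argument simultaneously for all $M \in \R$ produces the profile $f_M$ and its uniqueness at the same time, so Theorem 6.1 is recovered en route. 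Once convergence holds in $L^2(K) \hookrightarrow L^1(\R^N)$, the full conclusion in $L^p(\R^N)$ for $p \in [1,\infty]$ follows by interpolation against the uniform decay $t^{N/2}\|u(t)\|_\infty \leq C$ from Theorem 4.2.

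The principal obstacle is the dynamical systems step: the $L^1$ contraction is only a \emph{weak} dissipativity statement (no strict contraction), so identifying the limit requires a careful Lyapunov-type argument in the self-similar frame rather than a direct monotonicity computation. A secondary technical difficulty is absorbing the nonlinear term $a \cdot \nabla(|v|^{1/N}v)$ in the weighted $H^1(K)$ estimate, which relies crucially on the critical scaling exponent $q = 1 + 1/N$ that makes the nonlinearity compatible both with mass conservation and with the Gaussian weight $K$.
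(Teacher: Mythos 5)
Your overall architecture matches the paper's: pass to the similarity variables, work in $L^2(K)$, prove precompactness of the trajectory via an $H^1(K)$ bound and the compact embedding, identify the $\omega$-limit set by a dynamical systems argument, and finish by density in $L^1$ and interpolation using the gradient bounds. However, there is a genuine gap at the central step, and you have flagged it yourself without closing it. The $L^1$ contraction of Theorem 4.3 is non-strict, and a non-strict Lyapunov function cannot force the $\omega$-limit set to consist of equilibria, let alone a single point: comparing a solution with its time-translate only shows that $\|v(s+\tau)-v(s)\|_1$ decreases to some limit $\varphi_{\tau,\infty}$, and on the $\omega$-limit set this quantity is \emph{constant} in $s$ --- which is consistent with a nontrivial periodic or recurrent orbit unless one knows that the $L^1$ distance between two \emph{distinct} equal-mass solutions is \emph{strictly} decreasing. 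That strict monotonicity is precisely Proposition 6.6 of the paper, and it is the technical heart of the chapter: its proof requires (i) a backward uniqueness lemma (via Ghidaglia / Lions--Malgrange) to guarantee that the sets $\{v>w\}$ and $\{v<w\}$ are nonempty for all $s$, (ii) the identity $\Delta\varphi^+=\sgn(\varphi^+)\Delta\varphi$ when $\int_{\{\varphi>0\}}\Delta\varphi=0$, and (iii) a unique continuation argument (integrating the equation along the convection direction $a$ to remove the derivative acting on the merely bounded potential $b$, then applying H\"ormander's theorem) to derive a contradiction if the derivative of the $L^1$ distance ever vanishes on a time interval. None of this is supplied or even sketched in your proposal; ``a careful Lyapunov-type argument'' does not substitute for it. Relatedly, your claim that ``standard parabolic regularity'' shows every $\omega$-limit point solves the profile equation is not correct as stated --- invariance of $\omega(u_0)$ alone does not make its elements steady states; it is exactly the strict contraction that does.

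A secondary, smaller issue: interpolating the $L^1$ convergence against the uniform bound $t^{N/2}\|u(t)\|_\infty\le C$ gives convergence in $L^p$ only for $p<\infty$; the endpoint $p=\infty$ requires the $W^{1,p}$ bounds from the gradient estimates (which you do invoke earlier for compactness, so this is repairable, but the final step as written does not deliver $p=\infty$).
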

		
This theorem proves that when $ t \to \infty $ the solutions to \eqref{eq: 7.4} behave as the self-similar solutions. This theorem extends the results proved in Chapters I and II for the linear heat equation and for the Burgers equation in one spatial dimension.

		\section{Steady states and convergence of trajectories for dynamical systems}
		
		Let $(Z,d)$ be a complete metric space and
		$\{S(t)\}_{t\ge 0}$ a dynamical system $Z$ that satisfies:
		\begin{align*}
			\text{\hskip\parindent(i)}&\,S(t)\in C(Z,Z), \quad \forall t\ge 0;  \\
			\text{\hskip\parindent(ii)}&\,S(0)=I= \hbox{identity};  \\
			\text{\hskip\parindent(iii)}&\,S(t+s)=S(t)\circ S(s), \quad \forall t,s\ge 0;
			\\
			\text{\hskip\parindent(iv)}&\,S(t)z\in C\bigl([0,\infty);Z\bigr), \quad \forall z\in Z.
		\end{align*}
		
		We have the following result of existence of steady states and convergence of trajectories
		\begin{thm}
			Suppose  that $\bigcup_{t\ge 0}\{S(t)z\}$ is relatively compact in $Z$ for all $z\in Z$. Suppose  that there exist continuous functions $\psi\in C(Z;\R)$,$\phi\in C(\R,\R)$ such that
			\begin{align}
				&\psi\bigl(S(t)z\bigr)=\psi (z),\qquad\forall t\ge 0,\forall z\in Z  \label{eq: 7.6}\\
				&\phi\Bigl(d\bigl(S(t)z_1,S(t)z_2\bigr)\Bigr)
				\hbox{ is strictly decreasing in $t$ if $z_1\ne z_2$ and $\psi(z_1)=\psi(z_2)$.} \label{eq: 7.7}
			\end{align}
			
			Then for all $M\in R(\psi)$ ($={}$ range of
			$\psi$) there exists a unique  $z_M\in Z$ such that
			\begin{equation} \label{eq: 7.8}
				S(t)z_M=z_M, \qquad\forall t\ge 0 \hbox{ and } \psi(z_M)=M
			\end{equation}
			and for all $z\in Z$ such that $\psi(z)=M$, it holds
			\begin{equation} \label{eq: 7.9}
			S(t)z\to z_M  \quad  when  \quad t\to\infty.
			\end{equation}
		\end{thm}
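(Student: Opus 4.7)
The plan is to adapt the LaSalle invariance principle to this abstract setting, using the strict monotonicity of $\phi\circ d$ along two simultaneous trajectories with the same $\psi$-value as the sole dynamical ingredient. Fix $M\in R(\psi)$ and $z\in Z$ with $\psi(z)=M$. By the compactness hypothesis, the positive orbit of $z$ is relatively compact, so the $\omega$-limit set
$$\omega(z):=\bigcap_{T\ge 0}\overline{\{S(t)z:\,t\ge T\}}$$
is nonempty, compact, and positively invariant under $S(t)$. Continuity of $\psi$ combined with \eqref{eq: 7.6} forces $\psi\equiv M$ on $\omega(z)$.

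The first and hardest step is to produce a fixed point inside $\omega(z)$. I pick any $w\in\omega(z)$ and, for each $s_0\ge 0$, look at
$$F_{s_0}(t):=\phi\bigl(d(S(t)w,\,S(t)(S(s_0)w))\bigr),$$
which by \eqref{eq: 7.7} is non-increasing, and strictly decreasing unless $w=S(s_0)w$. Relative compactness gives $t_n\to\infty$ with $S(t_n)w\to w^*\in\omega(z)$; continuity of $S(s_0)$, $\phi$, and $d$ then yields $\lim_{t\to\infty}F_{s_0}(t)=\phi(d(w^*,S(s_0)w^*))=:L(s_0)$. Shifting $t_n$ to $t_n+\tau$ for arbitrary $\tau\ge 0$ and using the semigroup property produces
$$\phi\bigl(d(S(\tau)w^*,\,S(\tau)S(s_0)w^*)\bigr)=L(s_0),\qquad \forall\,\tau\ge 0.$$
If $w^*\ne S(s_0)w^*$, hypothesis \eqref{eq: 7.7} applied to the pair $(w^*,S(s_0)w^*)$ would make the left side strictly decreasing in $\tau$, contradicting constancy. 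Hence $S(s_0)w^*=w^*$ for every $s_0\ge 0$, so $z_M:=w^*$ is a fixed point with $\psi(z_M)=M$.

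Next I would prove convergence $S(t)z\to z_M$ by essentially the same trick. Set $V(t):=\phi(d(S(t)z,z_M))=\phi(d(S(t)z,S(t)z_M))$, which is non-increasing by \eqref{eq: 7.7}. For any subsequence $t_n\to\infty$ with $S(t_n)z\to\bar w\in\omega(z)$, continuity gives $\lim_n V(t_n)=\phi(d(\bar w,z_M))$; monotonicity of $V$ upgrades this to the full limit $L$. Shifting by $\tau$ yields $\phi(d(S(\tau)\bar w,S(\tau)z_M))=L$ for all $\tau\ge 0$, and \eqref{eq: 7.7} now forces $\bar w=z_M$ (both points have $\psi$-value $M$). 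Hence every subsequential limit of $S(t)z$ equals $z_M$, and relative compactness promotes this to $S(t)z\to z_M$. Uniqueness of $z_M$ follows for free: a second fixed point $z_M'$ with $\psi(z_M')=M$ would satisfy $S(t)z_M'\equiv z_M'$, which by what was just proved also converges to $z_M$, so $z_M'=z_M$.

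The main obstacle is the fixed-point construction, since hypothesis \eqref{eq: 7.7} only compares two trajectories and does not a priori guarantee the existence of any equilibrium. The device of comparing the orbit of $w\in\omega(z)$ with its own time-shift $S(s_0)w$ — which, crucially, still lies in the invariant compact set $\omega(z)$ — turns the strict-decrease hypothesis into an obstruction ruling out any non-trivial asymptotic translation, which is exactly what is needed to pin down $w^*$ as a genuine fixed point.
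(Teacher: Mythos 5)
Your proof is correct and follows essentially the same route as the paper: both arguments extract a fixed point from the $\omega$-limit set by comparing a trajectory with its own time-shift $S(\tau)z$, using the monotonicity of $\phi\circ d$ from \eqref{eq: 7.7} to obtain a limit and the shift-by-$\tau$ trick to force the limiting comparison function to be constant, contradicting strict decrease. The only cosmetic differences are that the paper runs this argument directly on $z$ to show \emph{every} point of $\omega(z)$ is fixed (then collapses $\omega(z)$ to a singleton via uniqueness), whereas you extract a single fixed point one level deeper in $\omega(w)$ and then prove convergence by a second, structurally identical Lyapunov argument with $V(t)=\phi\bigl(d(S(t)z,z_M)\bigr)$.
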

		
		\begin{proof}
			We proceed in several steps.
			
			\begin{enumerate}
				\item[]{\em Uniqueness.}
				This is an immediate consequence of \eqref{eq: 7.7}. Indeed, if $z_M\in Z$ and $z_M'\in
				Z$ satisfy
				$$S(t)z_M=z_M, S(t)z'_M=z'_M, \qquad\forall t\ge 0,
				\psi (z_M)=\psi (z_M')=M$$
				then $\Bigl(d\bigl(S(t)z_M,S(t)z_M'\bigl)\Bigr)$ es independiente de
				$t$, and therefore, in view of \eqref{eq: 7.7}, $z_M=z_M'$.
				
				\item[]{\em Existence of steady states and convergence of trajectories.} 
		Fix arbitrary $M\in R(\psi)$ and $z\in Z$  such that $\psi(z)=M$.
				
	Let us consider the
				$\omega$-limit set $\omega(z)$ of the trajectory $\{S(t)z\}_{t\ge 0}$: 
				$$\omega(z)=\{\,\eta\in Z:\exists t_n\to\infty,
				S(t_n)z\to\eta\,\}.$$
				
				By the hypothesis of compactness of trajectories, we know that $\omega(z)\neq\phi$. On the other hand,
				\begin{equation} \label{eq: 7.10}
					\omega(z)\subset\{\,\eta\in Z:\psi(\eta )=M\,\} 
				\end{equation}
				as $\psi\in C(Z;\R)$.
		
		Thus it is enough to prove that
				\begin{equation} \label{eq: 7.11}
					\omega(z)\subset\{\,\eta\in Z:S(t)\eta=\eta ,\forall
					t\ge 0\,\}.   
				\end{equation}
				
				Indeed, from \eqref{eq: 7.10} and \eqref{eq: 7.11} we deduce that
				$$\omega(z)\subset\{\,\eta\in Z:S(t)\eta=\eta,\forall t\ge
				0,\psi(\eta)=M\,\}$$
				and since $\omega(z)\neq\phi$, this guarantees the existence of a solution $z_M$ to \eqref{eq: 7.8}. Since, on the other hand, we know that this solution is unique, we deduce that
				$$\omega(z)=\{z_M\}$$
				which is equivalent to \eqref{eq: 7.9}.
				
	With the goal of obtaining \eqref{eq: 7.10} we will prove that given any $\tau>0$
				\begin{equation} \label{eq: 7.12}
				S(\tau)\eta=\eta,\qquad\forall\eta\in\omega(z).
				\end{equation}
				Fix an arbitrary $\tau>0$ and $\eta\in\omega(z)$. Then there exists a
				sequence $t_n\to\infty$ such that
				$$S(t_n)z\to\eta$$
				and therefore, since $S(t)\in C(Z;\R)$ for all $t>0$, by the semigroup property we deduce that
				\begin{equation} \label{eq: 7.13}
					S(t+t_n)z\to S(t)\eta,\qquad\forall t>0.
				\end{equation}
				On the other hand, by \eqref{eq: 7.6}, $\psi\bigl(S(\tau)\eta\bigr)=\psi(\eta)$, and in view of \eqref{eq: 7.7}, the following alternative holds: either \eqref{eq: 7.12} holds or 
				\begin{equation} \label{eq: 7.14}
					\phi\Bigl(d\bigl(S(t+\tau)\eta ,S(t)\eta\bigr)\Bigr)\hbox{ is strictly decreasing.}
				\end{equation}
				Therefore, it is enough to prove that \eqref{eq: 7.14} is not fulfilled.
				
				As $\phi\in C(\R;\R)$, por \eqref{eq: 7.13} it holds
				\begin{equation} \label{eq: 7.15}
					\phi\Bigl(d\bigl(S(t+\tau)\eta ,S(t)\eta\bigr)\Bigr)=\lim_{n\to\infty}
					\varphi_\tau(t+t_n). 
				\end{equation}
				where
				$$\varphi_\tau(t)=\Bigl(d\bigl(S(t+\tau)z,S(t)z\bigr)\Bigr).$$
				
				From \eqref{eq: 7.7} we deduce  that $\varphi_\tau(\cdot)$ is decreasing. As the
				trajectory $\bigcup_{t\ge 0} \{S(t)z\}$ is relatively compact in $Z$
				and $\phi\in C(\R;\R)$ it follows that the limit
				\begin{equation} \label{eq: 7.16}
					\lim_{t\to\infty}\varphi_\tau(t)=\varphi_{\tau,\infty}
				\end{equation}
				exists. Combining \eqref{eq: 7.15} and \eqref{eq: 7.16} we see that
				$$\phi\Bigl(d\bigl(S(t+\tau)\eta,S(t)\eta\bigr)\Bigr)
				=\varphi_{\tau,\infty},\qquad\forall t>0$$ and therefore \eqref{eq: 7.14} does not hold. This completes the proof.
			\end{enumerate}
		\end{proof}
		
		\section{Estimates in the weighted Sobolev spaces}
		
		Let us consider the solution $u=u(x,t)$ to
		\begin{align} \label{eq: 7.17}
		u_t-\Delta u&=a\cdot\nabla(|u|^{1/N}u)&\hbox{en
			$\R^N\times(0,\infty)$}\\
		u(0)&=u_0 \nonumber
		\end{align}   
		with $u_0\in L^1(\R^N)\cap L^\infty(\R^N)$.
		As seen in Theorems 4.1 and 4.3 of Chapter 4, we deduce that  $u\in
		C\bigl([0,\infty);L^1(\R^N)\bigr)
		\cap L^\infty\bigl(\R^N\times(0,\infty)\bigr)$
		and
		$$\|u(t)\|_p\le C_p \|u_0\|_1 t^{-\np},\qquad\forall t>0$$
		from where we deduce that
		\begin{equation} \label{eq: 7.18}
		\|u(t)\|_p\le C'_p(t+1)^{-\np},\qquad\forall t>0
		\end{equation}
		For all
		$p\in[1,\infty]$ with $C'_p$ depending on $\|u_0\|_1$ and $\|u_0\|_p$.

		We define the corresponding function $v=v(y,s)$ in the self-similar variables:
		\begin{equation} \label{eq: 7.19}
			v(y,s)=e^{sN/2}u(e^{s/2}y,e^s-1).
		\end{equation}
		
		It can easily be shown that  $v=v(y,s)$ solves		\begin{align} \label{eq: 7.20}
		v_s-\Delta v-\frac{y\cdot\nabla v}{2}-\frac{N}{2}
		v&=a\cdot\nabla(|v|^{1/N}v)&\hbox{en $\R^N\times(0,\infty)$}\\
		v(0)&=u_0. \nonumber
		\end{align}
		
		On the other hand, from \eqref{eq: 7.18} we deduce that
		\begin{equation} \label{eq: 7.21}
			v\in L^\infty\bigl(0,\infty ;L^1(\R^N)\cap L^\infty(\R^N)\bigr).
		\end{equation}
		
	In order to ensure the compactness of the trajectory $\{v(s)\}_{s\ge 0}$ that is needed to apply the Theorem 6.3, we will work in the weighted Sobolev spaces studied in the chapter 3.
		
		We have the following result.
		\begin{prop} 
			Suppose  that $u_0\in L^2(K)\cap L^\infty(\R^N)$. Then the
			solution $v=v(y,s)$ of \eqref{eq: 7.20} satisfies
			\begin{align}
				v&\in L^\infty\bigl(0,\infty ;L^2(K)\bigr)\cap
				C\bigl([0,\infty);L^2(K)\bigr) \label{eq: 7.22}\\
				v&\in L^\infty\bigl(1,\infty ;H^1(K)\bigr) \label{eq: 7.23}.
			\end{align}
			Moreover, the trajectory $\{v(s)\}_{s\ge 0}$ is relatively compact  in $L^2(K)\cap L^\infty(\R^N)$.
		\end{prop}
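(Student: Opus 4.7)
Proposition 6.4 collects the ingredients needed to apply the abstract dynamical-systems result of Theorem 6.3 to the semigroup generated by \eqref{eq: 7.20}: boundedness and continuity of the trajectory in $L^2(K)$, together with relative compactness in $L^2(K)\cap L^\infty(\R^N)$. The natural functional framework is the weighted Sobolev setting of Proposition~3.1: all three properties will be obtained from weighted energy estimates combined with the compact embedding $H^1(K)\subset L^2(K)$ and the $L^\infty$-bound already recorded in \eqref{eq: 7.21}.

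\textbf{The $L^2(K)$ bound and continuity.} Multiplying \eqref{eq: 7.20} by $vK$ and integrating over $\R^N$, the selfadjointness of $L$ with respect to $(\cdot,\cdot)_K$ and the identity $(Lv,v)_K=\|\nabla v\|_K^2$ give
\[
\tfrac12\tfrac{d}{ds}\|v\|_K^2+\|\nabla v\|_K^2-\tfrac{N}{2}\|v\|_K^2=\bigl(a\cdot\nabla(|v|^{1/N}v),v\bigr)_K.
\]
An integration by parts on the right-hand side, using $\nabla K=(y/2)K$, rewrites it as $c\int|v|^{2+1/N}(a\cdot y)K\,dy$ for an explicit constant $c$. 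The uniform $L^\infty$-bound \eqref{eq: 7.21} reduces the factor $|v|^{1/N}$ to a constant, and a Young splitting $|y|\le\epsilon|y|^2+C_\epsilon$ together with the weighted Hardy-type inequality \eqref{eq: 4.13} of Proposition~3.1(i) absorbs an $\epsilon$-fraction of $\|\nabla v\|_K^2$ into the left-hand side. The residual term is controlled using the spectral estimate $\|\nabla v\|_K^2\ge(N/2)\|v\|_K^2+\tfrac12\|\tilde v\|_K^2$, where $\tilde v$ is the $\varphi_1$-orthogonal component (Proposition~3.1(v)), together with the conservation of the $\varphi_1$-projection coming from mass conservation. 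A Gronwall argument on $\|\tilde v(s)\|_K^2$ then yields \eqref{eq: 7.22}. Continuity in $L^2(K)$ follows from the $W^{2,p}$-regularity of Theorem~4.1 transported through \eqref{eq: 7.19}, combined with the integrated form of the identity above.

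\textbf{The $H^1(K)$ bound and compactness.} For \eqref{eq: 7.23}, I would exploit the parabolic regularizing effect. Multiplying \eqref{eq: 7.20} by $Lv\cdot K$ and repeating the selfadjoint energy computation yields a differential inequality of the form $\tfrac12\tfrac{d}{ds}\|\nabla v\|_K^2+\|Lv\|_K^2\le\tfrac{N}{2}\|\nabla v\|_K^2+\bigl|(a\cdot\nabla(|v|^{1/N}v),Lv)_K\bigr|$, where the nonlinear term is controlled by the $L^\infty$-bound and a fraction of $\|Lv\|_K^2$; a Gronwall argument on $s\|\nabla v(s)\|_K^2$ then gives the bound for $s\ge1$. (An equivalent route is Duhamel against the semigroup $S(s)$ of Theorem~3.2 with the smoothing estimate \eqref{eq: 4.42} on the linear part.) The compactness of $\{v(s)\}_{s\ge 1}$ in $L^2(K)$ follows from the compact embedding of Proposition~3.1(ii), and patches with the continuous piece on $[0,1]$ to give compactness in $L^2(K)$ on all of $[0,\infty)$. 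To upgrade to $L^2(K)\cap L^\infty(\R^N)$, I would invoke the gradient estimates of Theorem~4.8, which transported through \eqref{eq: 7.19} give a uniform $W^{1,p}(\R^N)$ bound on $v(s)$ for $s\ge1$; together with the uniform decay at infinity built into the $L^2(K)$-estimate, an Arzel\`a--Ascoli argument on an exhaustion by balls delivers $L^\infty$-compactness.

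\textbf{Main obstacle.} The delicate step is the weighted $L^2(K)$-estimate: unlike in flat $L^2(\R^N)$, the convective term does \emph{not} integrate to zero against the weight (because $\nabla K\ne0$), and the resulting $|y|$-weighted integral must be absorbed using \eqref{eq: 4.13} at the cost of consuming part of $\|\nabla v\|_K^2$. The constants must be tracked carefully so that absorption is sustainable uniformly in $s$, and the spectral gap $\lambda_2-\lambda_1=1/2$ from Proposition~3.1(v) combined with the conservation of the $\varphi_1$-projection is what prevents the remaining terms from producing exponential growth.
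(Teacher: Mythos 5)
Your overall architecture matches the paper's: decompose $v=M\varphi_1+\tilde v$ using mass conservation, run a weighted energy estimate for \eqref{eq: 7.22}, obtain \eqref{eq: 7.23} by the smoothing of the semigroup via Duhamel and a singular Gronwall lemma (the paper's Lemma 6.5), and upgrade $L^2(K)$-compactness to $L^\infty(\R^N)$-compactness using the Chapter 4 $W^{1,p}$ bounds. Your alternative handling of the convective term (integrating by parts against $\nabla K=\tfrac{y}{2}K$ to get $c\int|v|^{2+1/N}(a\cdot y)K\,dy$, then Young plus the Hardy inequality \eqref{eq: 4.13}) is a legitimate variant of the paper's direct estimate of $\int|\nabla v|\,|v|^{1/N}|\tilde v|K$; both lead to the same residual lower-order term.

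However, there is a genuine gap precisely at the step you single out as the main obstacle. After absorbing the $\epsilon$-fraction of $\|\nabla v\|_K^2$, you are left with a term of the form $C_\epsilon\|\tilde v(s)\|_K^2$ whose constant involves $|a|$, $\|v\|_\infty^{1/N}$ and the Young constant $\sim 1/\epsilon$, and is in general far larger than the spectral margin: the gap $\lambda_2-\lambda_1=\tfrac12$ only yields $\|\nabla\tilde v\|_K^2-\tfrac{N}{2}\|\tilde v\|_K^2\ge\tfrac{1}{N+1}\|\nabla\tilde v\|_K^2\ge\tfrac14\|\tilde v\|_K^2$, a fixed small dividend that cannot dominate $C_\epsilon$. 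So ``spectral gap plus conservation of the $\varphi_1$-projection'' does \emph{not} prevent Gronwall from producing $e^{Cs}$ growth, and \eqref{eq: 7.22} does not follow. The paper's essential extra ingredient, which your proposal omits, is inequality \eqref{eq: 4.16} of Proposition 3.1(vi): $\|w\|_K^2\le\epsilon\|w\|_{1,K}^2+C_\epsilon\|w\|_{L^2(B_R)}^2$, combined with the fact that $v\in L^\infty\bigl(0,\infty;L^2(\R^N)\bigr)$ in the \emph{unweighted} space by the decay estimates \eqref{eq: 7.18}--\eqref{eq: 7.21}. This converts the dangerous term $C\|\tilde v\|_K^2$ into $\tfrac{1}{2(N+1)}\|\nabla\tilde v\|_K^2+C'$, yielding the genuinely dissipative inequality $\tfrac{d}{ds}\|\tilde v\|_K^2+\tfrac14\|\tilde v\|_K^2\le C_4$ and hence the uniform $L^2(K)$ bound. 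You need to insert this interpolation step; without it the rest of the argument (which depends on \eqref{eq: 7.22} as input to the Duhamel estimate) does not get off the ground.
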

		
		\begin{proof}
			We first observe that by the biunivocal correspondence \eqref{eq: 7.19} between the solutions to \eqref{eq: 7.17} and
			\eqref{eq: 7.20}, the system \eqref{eq: 7.20} admits a unique solution that is the given by
			\eqref{eq: 7.19}.
			
			As
			$$\int u(x,t)\,dx=\int u_0(x)\,dx=M, \qquad\forall t>0 $$
			in view of (19) it can easily be shown that 
			$$\int v(y,s)\,dy=M,\qquad\forall s>0.$$
			Thus the orthogonal projection of $v(s)$ on the first eigenspace of the operator $L$ in $L^2(K)$ is independent of $s$ and it holds
			\begin{equation} \label{eq: 7.24}
				v(y,s)=M\varphi_1(y)+\tilde v(y,s)
			\end{equation}
			with $\tilde v(s)\in \varphi_1^\perp$ for all $s>0$, $\varphi_1$ being the first eigenfunction
			$\varphi_1(y)=(4\pi)^{-N/2} K^{-1}(y)=(4\pi)^{-N/2}
			\exp(-|y|^2/4)$.
			
			By virtue of \eqref{eq: 7.24} one may observe that \eqref{eq: 7.22} is equivalent to
			\begin{equation} \label{eq: 7.25}
			\tilde v(s)\in L^\infty\bigl(0,\infty ;L^2(K)\bigr)\cap
			C\bigl([0,\infty);L^2(K)\bigr). 
			\end{equation}
			The following holds:
			\begin{align} \label{eq: 7.26}
			\tilde v_s+L\tilde v-\frac{N}{2}\tilde v&=a\cdot\nabla(|v|^{1/N}v)
			&\hbox{en $\R^N\times(0,\infty)$}\\
			\tilde v(0)&=\tilde u_0 \nonumber
			\end{align}
			with $\tilde u_0=u_0-M\varphi_1.$
			Multiplying the equation \eqref{eq: 7.26} by $\tilde vK$ and integrating in $\R^N$ we obtain			\begin{align} \label{eq: 7.27}
				{\frac12\frac{d}{ds}\|\tilde v(s)\|_K^2+\|\nabla\tilde
					v(s)\|_K^2-\frac{N}{2}\|\tilde v(s)\|_K^2} &\le|a|\delfrac({N+1}{N}) \int|\nabla v| |v|^{1/N}|\tilde v|K\\
					&\le |a|\delfrac({N+1}{N})\|v\|_\infty^{1/N}\int|\nabla(M\varphi_1
					+\tilde v)| |\tilde v|K. \nonumber
			\end{align}
			
			By Proposition 3.1 of Chapter 3 we deduce that
			$$\|\nabla w\|_K^2 \ge \frac{N+1}{2} \|w\|_K^2,
			\qquad\forall w\in\varphi_1^\perp\cap H^1(K)$$
			and therefore
			\begin{equation} \label{eq: 7.28}
				\|\nabla w\|_K^2-\frac{N}{2}\|w\|_K^2\ge \frac{1}{N+1} \|\nabla
				w\|_K^2,  \qquad\forall w\in\varphi_1^\perp\cap H^1(K). 
			\end{equation}
			
			By \eqref{eq: 7.27}, \eqref{eq: 7.28} and thanks to the fact that $v\in
			L^\infty\bigl(\R^N\times(0,\infty)\bigr)$ we obtain
			$$\frac{d}{ds}\|\tilde v(s)\|_K^2+\frac{2}{N+1}\|\nabla\tilde
			v(s)\|_K^2\le C_1\biggl\{\|\tilde v\|_K+\int |\nabla\tilde v| |\tilde
			v|K\biggr\}$$
			for $C_1>0$ sufficiently large . On the other hand
			$$\int|\nabla\tilde v||\tilde v|K\le\epsilon\|\nabla\tilde v\|_K^2
			+\frac{1}{4\epsilon}\|\tilde v\|_K^2$$
			for all $\epsilon>0$ and therefore, eligiendo $\epsilon>0$
			sufficiently small  we deduce that
			\begin{align*}
			\frac{d}{ds}\|\tilde v(s)\|_K^2 +\frac{1}{N+1} \|\nabla\tilde v(s)\|_K^2
			&\le C_2\{\|\tilde v(s)\|_K+\|\tilde v(s)\|_K^2\}\\
			&\le C_3\{1+\|\tilde v(s)\|_K^2\}.
			\end{align*}
			
		From the inequality \eqref{eq: 4.16} of Chapter III it follows that, for all
			$\epsilon>0$ there exists $C_\epsilon>0$ such that
			$$\|w\|_K^2\le\epsilon\|w\|_{1,K}^2 +C_\epsilon\|w\|_2^2,
			\qquad\forall w\in H^1(K).$$
			
		Therefore, choosing $\epsilon>0$ sufficiently small and keeping into account that $\tilde v\in L^\infty\bigl(0,\infty; L^2(\R^N)\bigr)$
			we obtain
			\begin{equation} \label{eq: 7.29}
			\frac{d}{ds}\|\tilde v(s)\|_K^2 +\frac{1}{2(N+1)} \|\nabla\tilde
			v(s)\|_K^2\le C_4  
			\end{equation}
			and therefore
			$$\frac{d}{ds}\|\tilde v(s)\|_K^2+\frac14 \|\tilde v(s)\|_K^2\le C_4.$$
			
			From this differential inequality we obtain that $\tilde v\in
			L^\infty\bigl(0,\infty; L^2(K)\bigr)$ and therefore $v\in
			L^\infty\bigl(0,\infty; L^2(K)\bigr)$. On the other hand, it can easily be checked that $v\in C\bigl([0,\infty); L^2(K)\bigr)$. Thus, \eqref{eq: 7.22} is proven.
			
	From \eqref{eq: 7.22} and thanks to the regularizing effect of the semigroup associated to system \eqref{eq: 7.20}, we deduce  \eqref{eq: 7.23}.
			
			Indeed, $S(s)$ being the semigroup generated by the operator
			\eqn{L-\frac{N}{2}I}, for all $\tau>0$, the function $v$ solution to \eqref{eq: 7.20}
			satisfies the integral equation 
			\begin{equation} \label{eq: 7.30}
				v(s+\tau)=S(s)v(\tau)+\int_0^s
				S(s-\sigma)\bigl[a\cdot\nabla\bigl(|v(\sigma+\tau)|^{1/N}
				v(\sigma+\tau)\bigr) \bigr]\,d\sigma. 
			\end{equation}
			
			As a consequence of the estimate \eqref{eq: 3.40} from Chapter 3 for the semigroup $S(\cdot)$ it holds
			\begin{equation} \label{eq: 7.31}
			\|S(s)w\|_{1,K} \le C\biggl(1+\frac{1}{\sqrt{s}}\biggr)\|w\|_K,
			\qquad\forall w\in L^2(K) 
			\end{equation}
			para $C>0$ sufficiently large. Taking  $H^1(K)$ norms in \eqref{eq: 7.30} and applying \eqref{eq: 7.31} we obtain
			\begin{align*}
				&{\|v(s+\tau)\|_{1,K}}\\
				&\le C\biggl(1+\frac{1}{\sqrt{s}}\biggr)\|v(\tau)\|_K \\
				&\quad+C|a|\int_0^s\biggl(1+\frac{1}{\sqrt{s-\tau}}\biggr)
					\|\nabla(|v|^{1/N}v)(\sigma+\tau)\|_K\,d\sigma\\
				&\le C\biggl(1+\frac{1}{\sqrt{s}}\biggr)\|v(\tau)\|_K\\
				&\quad+C|a|\delfrac({N+1}{N}) \|v\|_{L^\infty(\R^N\times(0,s+\tau))}^{1/N}
					\int_0^s\biggl(1+\frac{1}{\sqrt{s-\tau}}\biggr)
					\|v(\sigma+\tau)\|_{1,K}\,d\sigma.
			\end{align*}
			As $v\in L^\infty\bigl(\R^N\times(0,\infty)\bigr)$, we obtain
			\begin{equation} \label{eq: 7.32}
				\|v(s+\tau)\|_{1,K}\le C\biggl\{\biggl(1+\frac{1}{\sqrt{s}}\biggr)
				\|v(\tau)\|_K +\int_0^s\biggl(1+\frac{1}{\sqrt{s-\sigma}}\biggr)
				\|v(\sigma+\tau)\|_{1,K}\,d\sigma\biggr\}  
			\end{equation}
			para $C>0$ sufficiently large.
			
			We now need the following Gronwall lemma.
			\begin{lem}
		Let $T>0$, $A$, $B\ge0$ and $\varphi\in C([0,T])$ a non-negative function
				such that
				\begin{equation} \label{eq: 7.33}
				\varphi(t)\le A(1+t^{-1/2})+B\int_0^t
				\bigl(1+(t-s)^{-1/2}\bigr)\varphi(s)\,ds.
				\end{equation}
				
		Then, there exists a constant $C>0$ that depends only on $T$ and $B$ such that
				\begin{equation} \label{eq: 7.34}
					\varphi(t)\le CAt^{-1/2}, \qquad\forall t\in[0,T]. 
				\end{equation}
			\end{lem}
			
			\begin{proof}
				The proof is analog to that of Lemma 4.2 in Chapter 4.
	Multiplying \eqref{eq: 7.33} by $t^{1/2}$ we obtain
				\begin{align} \label{eq: 7.35}
					t^{1/2}\varphi(t)&\le A(1+t^{1/2})
					+Bt^{1/2}\int_0^t\bigl(1+(t-s)^{-1/2}\bigr)
					s^{-1/2}s^{1/2}\varphi(s)\,ds\\
					&\le A(1+T^{1/2})
					+Bt^{1/2}\int_0^t\bigl(1+(t-s)^{-1/2}\bigr)
					s^{-1/2}\psi(s)\,ds  \nonumber
				\end{align}
				with $\psi(t)=\sup_{s\in[0,t]} s^{1/2}\varphi(s)$. As $\psi$ is non-decreasing it follows that 
				$$t^{1/2}\varphi(t)\le A'
				+Bt^{1/2}\psi(t)\int_0^t\bigl(s^{-1/2}+(t-s)^{-1/2}s^{-1/2}\bigr)
				\,ds$$
				with $A'=A(1+T^{1/2})$. As
				$$\int_0^t\bigl(s^{-1/2}+(t-s)^{-1/2}s^{-1/2}\bigr)\,ds
				=2t^{1/2}+\int_0^1(1-s)^{-1/2}\,ds=2t^{1/2}+\alpha$$
				we obtain
				$$t^{1/2}\varphi(t)\le A'+B(2t+\alpha t^{1/2})\psi(t).$$
				
				Fix $t_0\in[0,T]$ such that ${B(2t_0+\alpha t^{1/2})=\frac{1}{2}}$.
				So, for all $t\in[0,t_0]$ we deduce that
				$$t^{1/2}\varphi(t)\le A'+\frac12\psi(t),\qquad\forall t\in[0,t_0]$$
				and taking supremums in this inequality we observe that
				\begin{equation} \label{eq: 7.36}
					\psi(t)\le 2A',\qquad\forall t\in[0,t_0].
				\end{equation}
				
				For $t\ge t_0$, choose ${\delta\le\frac{t_0}{2}}$. Decomposing the last integral in \eqref{eq: 7.35} in the segments $[0,\delta]$,
				$[\delta,t-\delta]$ and $[t-\delta,t]$, using the fact that $\psi$ is non-decreasing and taking supremums in \eqref{eq: 7.35} we obtain
				\begin{align} \label{eq: 7.37}
					\psi(t)&\le A'+B'\biggl(\int_0^\delta
					\bigl(s^{-1/2}+(t-s)^{-1/2}s^{-1/2}\bigr)\,ds {+\int_{t-\delta}^t
						\bigl(s^{-1/2}+(t-s)^{-1/2}s^{-1/2}\bigr)\,ds\biggr)\psi(t)}\nonumber\\
					&+B'\int_\delta^{t-\delta}
					\bigl(s^{-1/2}+(t-s)^{-1/2}s^{-1/2}\bigr)\psi(s)\,ds.  \nonumber
				\end{align}
				with $B'=BT^{1/2}$. Choosing $\delta>0$ sufficiently small we observe that
				$$B'\biggl(\int_0^\delta
				\bigl(s^{-1/2}+(t-s)^{-1/2}s^{-1/2}\bigr)\,ds\biggr)\le\frac{1}{2},
				\qquad\forall t\in[t_0,T]$$
				y since On the other hand
				$$s^{-1/2}+(t-s)^{-1/2}s^{-1/2}\le\delta^{-1/2}(1+\delta^{-1/2}),
				\qquad\forall t\in[t_0,T],\forall s\in[\delta,t-\delta]$$
				we obtain
				\begin{align*}
					{\psi(t)\le 2A'
						+B'\delta^{-1/2}(1+\delta^{-1/2})\int_0^t\psi(s)\,ds}\\
					{\le
						A'+2A'B'\delta^{-1/2}(1+\delta^{-1/2})t_0+B'\delta(1+\delta^{-1/2})
						\int_{t_0}^t\psi(s)\,ds.}
				\end{align*}
				
				Applying the classical Gronwall lemma we conclude that
				\begin{equation} \label{eq: 7.38}
					\psi(t)\le\bigl(2A'+2A'B'\delta^{-1/2}(1+\delta^{-1/2})t_0\bigr)
					\exp\bigl(B'\delta^{-1/2}(1+\delta^{-1/2})(t-t_0)\bigr).
				\end{equation}
				
				Combining \eqref{eq: 7.36} and \eqref{eq: 7.38} we see that
				\begin{equation} \label{eq: 7.39}
				\psi(t)\le CA,\qquad\forall t\in[0,T] 
				\end{equation}
				with $C>0$ depending on $B$ and $T$. Clearly \eqref{eq: 7.39} implies \eqref{eq: 7.34}.
			\end{proof}
			
			\paragraph*{End of proof to Proposition 6.4:}
			Applying Lemma 6.5 in the inequality \eqref{eq: 7.32} we see that there exists $C>0$ such that
			\begin{equation} \label{eq: 7.40}
				\|v(s+\tau)\|_{1,K}\le Cs^{-1/2}\|v(\tau)\|_K,\qquad\forall\tau>0,
				\forall s\in[0,1]. 
			\end{equation}
			
			Taking $s=1$ and keeping into account that $v\in L^\infty\bigl(0,\infty;
			L^2(K)\bigr)$ we obtain
			$$\|v(1+\tau)\|_{1,K}\le C,\qquad\forall\tau> 0$$
			which implies \eqref{eq: 7.23}.
			
			From \eqref{eq: 7.23} and by the compactness of the injection $H^1(K)\subset L^2(K)$ we deduce that $\{v(s)\}_{s\ge0}$ is relatively compact  in $L^2(K)$. From the continuity of the injection $L^2(K)\subset L^1(\R^N)$ we deduce in particular that $\{v(s)\}_{s\ge0}$ is relatively compact  en
			$L^1(\R^N)$.
			
			On the other hand, from the estimates shown in Theorems 4.3 and 4.4
			of Chapter 4 we deduce that
			$$v\in L^\infty\bigl(1,\infty;W^{1,p}(\R^N)\bigr)$$
			for all $p\in[1,\infty]$. As $W^{1,p}(\R^N)\subset L^\infty(\R^N)$
			for $p>N$, by interpolation, we deduce that $\{v(s)\}_{s\ge0}$ is relatively compact in $L^\infty(\R^N)$.
			
			Therefore, the trajectory is relatively compact  in $L^2(K)\cap
			L^\infty(\R^N)$. This concludes the proof.
		\end{proof}

		\section{Strong contraction property in $L^1(\R^N)$.}
		
		The following result holds.
		\begin{prop}
		Let $u_0$, $u_1\in L^2(K)\cap L^\infty(\R^N)$ be such that $u_0\ne u_1$ y
			$$\int u_0(y)\,dy=\int u_1(y)\,dy.$$
			Let $v$ and $w$ be the corresponding solutions to \eqref{eq: 7.20}.
			
			Then $\|v(s)-w(s)\|_1$ is a strictly decreasing function of
			$s$.
		\end{prop}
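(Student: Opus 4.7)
The strategy is to transfer the question back to the \emph{original} convection-diffusion equation \eqref{eq: 7.17}, where the non-strict $L^1$-contraction is already available from Chapter~4, and then to upgrade it to strict monotonicity using the parabolic regularizing effect together with the strong maximum principle.

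First I would set $u(x,t)$ and $\tilde u(x,t)$ to be the solutions of \eqref{eq: 7.17} with initial data $u_0$ and $u_1$, so that $v(y,s)=e^{sN/2}u(e^{s/2}y,e^s-1)$ and $w(y,s)=e^{sN/2}\tilde u(e^{s/2}y,e^s-1)$. The linear change of variable $x=e^{s/2}y$ shows that the Jacobian cancels the prefactor $e^{sN/2}$, so
$$\|v(s)-w(s)\|_{L^1(\R^N)}\;=\;\|u(t)-\tilde u(t)\|_{L^1(\R^N)},\qquad t=e^s-1.$$
Since $s\mapsto e^s-1$ is a strictly increasing diffeomorphism of $[0,\infty)$, the statement of the proposition is equivalent to strict monotonicity of $\phi(t):=\|u(t)-\tilde u(t)\|_{L^1}$ for the original equation.

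Next I would set $z=u-\tilde u$. Conservation of mass gives $\int z(\cdot,t)\,dx=0$ for every $t\ge 0$, and Theorem~4.1 yields $\phi(t)\le\phi(\tau)$ for $t\ge\tau$. The heart of the argument is the following contradiction scheme. Suppose $\phi(t_1)=\phi(t_2)$ for some $0\le t_1<t_2$; monotonicity then forces $\phi$ to be constant on $[t_1,t_2]$. Writing $F(u)-F(\tilde u)=c(x,t)\,z$ with $c(x,t)=\int_0^1 F'(\tilde u+\theta z)\,d\theta$, the equation for $z$ takes the linear form
$$z_t-\Delta z+\vec b(x,t)\cdot\nabla z+d(x,t)\,z=0,\qquad \vec b=-ca,\quad d=-a\cdot\nabla c.$$
By the regularity estimates of Chapter~4 (Theorems~4.3 and 4.7), $u$ and $\tilde u$ are bounded and have bounded gradients on $[t_1,t_2]\times\R^N$, so the coefficients $\vec b,d$ are bounded and $z$ is a classical solution of a linear parabolic equation. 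Multiplying this equation by a Lipschitz regularization $\theta_\varepsilon$ of $\mathrm{sgn}(z)$, the convective term is (as in the proof of Theorem~4.1) a perfect divergence that integrates to $0$, while the diffusion produces the non-negative dissipation $\int|\nabla z|^2\theta_\varepsilon'(z)\,dx$. Constancy of $\phi$ on $[t_1,t_2]$ therefore forces the space-time integral of this dissipation to vanish as $\varepsilon\to 0$.

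Finally I would close the argument by observing that $z(\cdot,t_1)\not\equiv 0$ (otherwise uniqueness in Theorem~4.3 would give $u\equiv\tilde u$, contradicting $u_0\neq u_1$), so because $\int z\,dx=0$ both sets $\{z(\cdot,t_1)>0\}$ and $\{z(\cdot,t_1)<0\}$ have positive measure. Applying the strong parabolic maximum principle to the linear equation above on each nodal region would then contradict the vanishing of the dissipation across the interface $\{z=0\}$. \textbf{Main obstacle:} this last step is the delicate one — one needs either a Hopf-type lemma along the interface $\{z=0\}$ or, equivalently, the fact that a classical solution of a linear parabolic equation with bounded coefficients cannot simultaneously vanish together with its gradient on a set large enough to separate $\{z>0\}$ from $\{z<0\}$ throughout the time slab $[t_1,t_2]$ without being identically zero there.
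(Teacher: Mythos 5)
Your reduction to the original variables and the non-strict monotonicity are fine, but the proposal has two genuine gaps, and they sit exactly where the paper's real work lies. First, to run the contradiction you need to know that $z(\cdot,t_1)\not\equiv 0$ and, more than that, that both nodal sets $\{z>0\}$ and $\{z<0\}$ are non-empty at \emph{every} positive time. The uniqueness of Theorem 4.3 is forward-in-time: $u(t_1)=\tilde u(t_1)$ only forces $u\equiv\tilde u$ for $t\geq t_1$, which is not in conflict with $u_0\neq u_1$. Ruling this out requires a genuine backward uniqueness result, which is why the paper proves Lemma 6.7 via the Ghidaglia / Lions--Malgrange theorem applied to $A=L-\frac{N}{2}I$ on $L^2(K)$. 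Second, the step you yourself flag as the ``main obstacle'' is the heart of the proposition and is not supplied; a Hopf-type lemma is not available because the interface $\{z=0\}$ has no regularity. The paper's route is different: it shows $\theta'(s)=\int_{\{v>w\}}\Delta(v-w)\,dy$ (the drift and convection terms are exact divergences of $W^{1,1}$ functions), gets $\theta'\leq 0$ from the Sard-lemma fact $\int_{\{\varphi>0\}}\Delta\varphi\,dx\leq 0$ for $\varphi\in W^{2,1}(\R^N)$, and in the equality case invokes Lemma 6.8 to conclude that $(v-w)^+$, extended by zero, solves the linearized equation on all of $\R^N$. Since $(v-w)^+$ vanishes on the non-empty open set $\{v<w\}$, a parabolic unique continuation theorem (H\"ormander) forces it to vanish identically, contradicting the non-emptiness of $\{v>w\}$. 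Even that needs an extra device -- integrating the equation along the convection direction $a$ -- because the bounded potential appears under a derivative and unique continuation cannot be applied to the equation as it stands.

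A further, smaller problem: your non-divergence rewriting with $d=-a\cdot\nabla c$ ``bounded'' is unjustified. For $F(s)=|s|^{1/N}s$ one has $F''(s)\sim |s|^{\frac{1}{N}-1}$, singular at $s=0$ for $N\geq 2$, so $\nabla c$ need not be bounded (nor obviously integrable enough to pass to the limit in your regularized dissipation identity). This is precisely the technical difficulty the paper points out, and it is why the argument must be kept in divergence form with only $b\in L^\infty(\R^N\times(s_0,s_1))$.
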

		
		\begin{proof}
			It is enough to show that $\bigl\|\bigl(v(s)-w(s)\bigr)^+\bigr\|_1$ and
			$\|\bigl(v(s)-w(s)\bigr)^-\|_1$ are strictly decreasing functions. As the proof is analog in both cases we will only consider the function 			
	$$\theta(s)=\|\bigl(v(s)-w(s)\bigr)^+\|_1=\int_{\Omega(s)}
			\bigl(v(y,s)-w(y,s)\bigr)\,dy$$
			where
			$$\Omega(s)=\{\,y\in\R^N: v(y,s)>w(y,s)\,\}.$$
			
			By Theorem 4.3 of Chapter 4 and taking into account \eqref{eq: 7.19}, which holds between the solutions to \eqref{eq: 7.17} and \eqref{eq: 7.20}, we see that $v\in C
			\bigl((0,\infty);W^{2,p}(\R^N)\bigr)$ for all $p\in (1,\infty)$. In particular, $v(s)$ and $w(s)$ are continuous functions of $y$ and therefore,
			$\Omega(s)$ is an open set for all $s>0$. Likewise
			$$\Omega_*(s)=\{\,y\in\R^N: v(y,s)<w(y,s)\,\}$$
			is an open set for all $s>0$.
			
			Let us see that $\Omega(s)$ and $\Omega_*(s)$ are non-empty for all $s>0$. Indeed suppose that there exists $s_0>0$ such that $\Omega(s_0)=\emptyset$ (or
			$\Omega_*(s_0)=\emptyset$). As
			$$\int v(y,s)\,dy=\int u_0(y)\,dy=\int u_1(y)\,dy=\int w(y,s)\,dy,
			\qquad\forall s>0$$
			we deduce 
			\begin{equation} \label{eq: 7.41}
				v(y,s_0)=w(y,s_0), \qquad\forall y\in\R^N. 
			\end{equation}
			The following lemma of backward uniqueness ensures that \eqref{eq: 7.41}
			implies $u_0=u_1$, which goes against the hypotheses of Proposition 6.6.
			
			\begin{lem}
			Let $v$ and $w$ be solutions to $\eqref{eq: 7.20}$ with initial data $u_0$ and $u_1$ respectively in $L^2(K)$. Suppose  that there exists $s_0>0$ such that
				$v(s_0)=w(s_0)$. Then, $u_0=u_1$.
			\end{lem}
			
			\begin{proof}
		By \eqref{eq: 7.22} we know that $v,w\in C\bigl([0,\infty); L^2(K)\bigr)$. On another hand, integrating the inequality \eqref{eq: 7.29} with respect to $s$ and using
				\eqref{eq: 7.22} we deduce  that $v,w\in L^2\bigl(0,s_0; H^1(K)\bigr)$.
				
				Set
				$$z(y,s)=v(y,s)-w(y,s)$$
				which satisfies
				\begin{align*}
				z_s+Lz-\frac{N}{2} z&=a\cdot\nabla\bigl(b(y,s)z\bigr)&\hbox{ en
					$\R^N\times[0,s_0]$}\\
				z(s_0)&=0
				\end{align*}
				con
				$$b(y,s)=\frac{|v|^{1/N} v(y,s)-|w|^{1/N} w(y,s)}{v(y,s)-w(y,s)}.$$
				
				As $v,w\in L^\infty\bigl(\R^N\times(0,\infty)\bigr)$, we see that $b\in 
				L^\infty\bigl(\R^N\times[0,s_0]\bigr)$ therefore, since the gradient operator is continuous from $L^2(K)$ in $\bigl(H^1(K)\bigr)^*$ (dual de
				$H^1(K)$) we see that
				$$\|a\cdot\nabla\bigl(b(s)z(s)\bigr)\|_{(H^1(K))^*}\le|a| C\|b(s)z(s)\|_K
				\le |a|C\|b(s)\|_\infty \|z(s)\|_K.$$
				Theorem 1.1 on backward uniqueness J.~M.~Ghidaglia \cite{Gh} (see also J.-L.~Lions and B.~Malgrage \cite{LioMa}), applied along with
				\eqn{A=L-\frac{N}{2}I},  $V=L^2(K)$, $D(A)=H^1(K)$ y
				$H=\bigl(H^1(K)\bigr)^*$, implies that $z(s)=0$ for all
				$s\in[0,s_0]$ and therefore $u_0=u_1$. 
			\end{proof}
			
			\paragraph*{End of proof of Proposition 6.6:}
			Multiplying by $\sgn\bigl(v(s)-w(s)\bigr)^+$ the equation satisfied by $v-w$ and integrating in $\R^N$ we obtain
			\begin{align*}
			\theta'(s)&=\int_{\Omega(s)} \bigl(v_s(y,s)-w_s(y,s)\bigr)\,dy\\
			&=\int_{\Omega(s)} \Delta\bigl(v(y,s)-w(y,s)\bigr)\,dy\\
			&\qquad+\int_{\Omega(s)}\biggl[y\cdot\frac{\nabla v(y,s)}{2}+\frac{N}{2}
			v(y,s)-y\cdot\frac{\nabla w(y,s)}{2}-\frac{N}{2} w(y,s)\biggr]\,dy\\
			&\qquad+\int_{\Omega(s)} a\cdot\biggl[\nabla\bigl(|v(y,s)|^{1/N} v(y,s)
			\bigr)-\nabla\bigl(|w(y,s)|^{1/N} w(y,s)\bigr)\biggl]\,dy.
			\end{align*}
			
			It holds
			\begin{align*}
				{\int_{\Omega(s)}\biggl[y\cdot\frac{\nabla v(y,s)}{2}+\frac{N}{2}
					v(y,s)-y\cdot\frac{\nabla w(y,s)}{2}-\frac{N}{2} w(y,s)\biggr]\,dy}\\
				{\begin{cases}
					=\frac12\int_{\Omega(s)}\div\biggl[y\cdot\bigl(v(y,s)-w(y,s)\bigr)
					\biggr]\,dy\\
					=\frac12\int\div\biggl[y\cdot\bigl(v(y,s)-w(y,s)\bigr)^+\biggr]\,dy
					=0
					\end{cases}}\end{align*}
			put that $y\cdot\bigl(v(y,s)-w(y,s)\bigr)^+\in W^{1,1}(\R^N)$ as
			$v(s),w(s)\in H^1(K)$ for all $s>0$.
			
			Likewise, for the same reason
			$$\int_{\Omega(s)} a\cdot\nabla\bigl(|v(y,s)|^{1/N} v(y,s)
			-|w(y,s)|^{1/N} w(y,s)\bigr)\,dy=0.$$
			
			Therefore
			\begin{equation} \label{eq: 7.42}
				\theta'(s)=\int_{\Omega(s)}  \Delta\bigl(v(y,s)-w(y,s)\bigr)\,ds.
			\end{equation}
			
			Identity \eqref{eq: 7.42} makes sense due to the regularizing effect of the semigroup associated with the equation \eqref{eq: 7.20} we have
			$v,w\in C\bigl((0,\infty);H^2(K)\bigr)\cap
			C^1\allowbreak\bigl((0,\infty);L^2 (K)\bigr)$ and Thus 
			$v,w\in C\bigl((0,\infty);W^{2,1}(\R^N)\bigr)\cap
			C^1\bigl((0,\infty);L^1(\R^N)\bigr)$.
		
			From \eqref{eq: 7.42} we deduce that $\theta$ non-increasing  as			\begin{equation} \label{eq: 7.43}
				\int_{\{\varphi>0\}}\Delta\varphi\le0\,dx, \qquad\forall\varphi\in
				W^{2,1}(\R^N)   
			\end{equation}
			(see \cite{AEZ} for a detailed proof  of \eqref{eq: 7.43}).  Note that if the set $\{\varphi>0\}$ is regular, one would have
			${\int_{\{\varphi>0\}}\Delta\varphi\,dx=\int_{\partial\{\varphi>0\}}
				\frac{\partial\varphi}{\partial\nu}d\sigma\le0}$ as ${\frac{\partial\varphi}{\partial\nu}\le0}$ on the boundary $\{\varphi>0\}$. In the general case, we obtain \eqref{eq: 7.43} thanks to the Sard lemma.
			
			Let us now see that $\theta$ is strictly decreasing. We argue by contradiction. If $\theta(s_0)=\theta(s_1)$ for $s_1>s_0$, from
			\eqref{eq: 7.42} and since $\theta$ is increasing we deduce that
			\begin{equation} \label{eq: 7.44}
				\int_{\Omega(s)} \Delta\bigl(v(y,s)-w(y,s)\bigr)\,dy=0,
				\qquad\forall s\in[s_0,s_1]. 
			\end{equation}
			
			We need the following lemma shown in \cite{AEZ}.
			
			\begin{lem}
				Let $\varphi\in W^{2,1}(\R^N)$ be such that
				$$\int_{\{\varphi>0\}} \Delta\varphi(x)\,dx=0.$$
				
				Then $\varphi^+\in W^{2,1}(\R^N)$ y
				$$\Delta\varphi^+=\sgn(\varphi^+) \Delta\varphi \text{ in } \R^N.$$
			\end{lem}
			
			In view of \eqref{eq: 7.44} and applying Lemma 6.8 to the function $\varphi=v(s)-w(s)$
			we deduce that the function
			\begin{equation} \label{eq: 7.45}
				z(y,s)=\begin{cases}
				v(y,s)-w(y,s),&s\in [s_0,s_1], y\in \Omega(s)\\
				0,&s\in [s_0,s_1], y\notin \Omega(s)
				\end{cases}  
			\end{equation}
			satisfies the equation
			\begin{equation} \label{eq: 7.46}
				z_s-\Delta z-\frac{y\cdot\nabla z}{2}-\frac{N}{2} z
				=a\cdot\nabla\bigl(b(y,s)z\bigr) \text{ in } \R^N\times(s_0,s_1) 
			\end{equation}
			with
			\begin{equation} \label{eq: 7.47}
				b(y,s)=\begin{cases}
				\frac{|v|^{1/N} v(y,s)-|w|^{1/N}w(y,s)}{v(y,s)-w(y,s)},
				&s\in [s_0,s_1], y\in \Omega(s)\\
				0,&s\in [s_0,s_1], y\notin \Omega(s)
				\end{cases}
			\end{equation}
			
			It holds
			\begin{equation} \label{eq: 7.48}
				b\in L^\infty\bigl(\R^N\times(s_0,s_1)\bigr) 
			\end{equation}
			and
			\begin{align} \label{eq: 7.49}
				{z=0 \text{ in } \Omega=\bigcup_{s\in(s_0,s_1)}[\Omega(s)\times\{s\}]}\\
				{=\{\,(y,s)\in\R^N\times(s_0,s_1):v(y,s)>w(y,s)\,\}.}  \nonumber
			\end{align}
			
			As $v,w\in C\bigl(\R^N\times(0,\infty)\bigl)$ we see that $\Omega$ is an open subset of $\R^N\times(s_0,s_1)$. On the other hand, we know that it is non-empty.
			
			Suppose that, using of a unique continuation argument, for certain
			$s_2,s_3\in(s_0,s_1)$ with $s_2<s_3$, we prove
			\begin{equation} \label{eq: 7.50}
				z=0 \text{ in } \R^N\times(s_2,s_3). 
			\end{equation}
			Then, by definition of $z$ it holds that $\Omega(s)=\emptyset$ For all
			$s\in[s_2,s_3]$, which, like we have seen before, contradicts the hypothesis $u_0\ne u_1$.
			
When proving \eqref{eq: 7.50} by means of a unique continuation argument, there is a technical difficulty that lies in the fact that the potential $ b \in L^\infty $ appears in the equation under a sign of derivation.
			
			To avoid this difficulty, we consider a ball $B(y_0,\epsilon)$ contained in $\R^N$ and $s_2,s_3\in(s_0,s_1)$ such that $s_2<s_3$ and
			\begin{equation} \label{eq: 7.51}
				B(y_0,\epsilon)\times(s_2,s_3)\subset \Omega. 
			\end{equation}
			This is possible since $\Omega$ is a non-empty open subset of $\R^{N+1}$.
			
			We define 
			\begin{equation} \label{eq: 7.52}
				n(y,s)=\int_0^{\frac{(y_0-y)\cdot a}{|a|^2}} z(y+\lambda
				a,s)\,d\lambda.
			\end{equation}
			
			In \eqref{eq: 7.52} the integral is taken on the segment $I$ of $\R^N$ with a 
			$y$ with
			${y+\delfrac({y_0-y)\cdot a}{|a|^2})a}$, that is the projection of $y$ on the hyperplane $y\cdot a=y_0\cdot a$ of $\R^N$ that contains
			$y_0$ being orthogonal to the convection vector $a$.
			
		If $y\in\R^N$ is an element of the cylinder
			\begin{equation} \label{eq: 7.53}
			C=\biggl\{\,y\in\R^N:\biggl|y+\frac{a}{|a|^2}\bigl((y_0-y)\cdot
			a\bigr)-y_0
			\biggr|<\epsilon\,\biggr\}
			\end{equation}
			then, Integrating the equation \eqref{eq: 7.46} on the segment $I$ we obtain
			that $n=n(y,s)$ satisfies the equation
			\begin{equation} \label{eq: 7.54}
			n_s-\Delta n-\frac{y\cdot\nabla n}{2}-\frac{(N-1)}{2}n=
			-ba\cdot\nabla n \text{ in } C\times(s_2,s_3).
			\end{equation}
			On the other hand
			\begin{equation} \label{eq: 7.55}
				n=0 \text{ in } \{C\cap B(y_0,\epsilon)\}
				\times (s_2,s_3),  
			\end{equation}
			by unique continuation (cf.\ L.~H\"ormander
			\cite{Hor}, Th.~8.9.1., p.\ 224), from \eqref{eq: 7.54} and \eqref{eq: 7.55} we deduce that
			$$n=0 \text{ in } C\times(s_2,s_3)  $$
			and thus 
			\begin{equation} \label{eq: 7.56}
				z(y,s)=-a\cdot\nabla n(y,s)=0 \text{ in } C\times(s_2,s_3).
			\end{equation}
			
		Let $H$ be the hyperplane of $\R^N$, perpendicular to the vector $a$ and which
			contains the origin. Given $y\in\R^N$, we write it as
			$$y=\biggl(\bar y, \frac{(y\cdot a)}{|a|}\biggr)$$
			where $\bar y\in \R^{N-1}$ is the projection of $y$ on the hyperplane $H$.
			
			We define 
			$$\rho(\bar y,s)=\int_{-\infty}^\infty z(\bar y,\lambda,s)\,d\lambda=
			\int_{\infty}^\infty z(y+ta,s)\,dt.$$
			
			From \eqref{eq: 7.56} we deduce that
			$$\rho(\bar y,s)=0,\qquad\forall(\bar y,s)\in[H\cap
			B(0,\epsilon)]\times(s_2,s_3).$$
			
			On the other hand, integrating the equation \eqref{eq: 7.46} in the direction of the vector
			$a$ we deduce that
			$$\rho_s-\Delta_{\bar y}\rho-\frac{\bar y\cdot\nabla_{\bar y}\rho}{2}
			-\delfrac({N-1}{2})\rho=0 \text{ in } H\times(s_2,s_3). $$
			($\Delta_{\bar y}$ and $\nabla_{\bar y}$ denote the laplacian and the
			gradient in the variables $\bar y$).
			
			By unique continuation we obtain
			$$\rho=0 \text{ in } H\times(s_2,s_3) $$
			and since $z\ge 0$,
			$$z=0 \text{ in } \R^N\times(s_2,s_3). $$
			
			This concludes the proof of  Proposition 6.6.
		\end{proof}
		
		\section{Proof of the main results}
		
		In this part, we conclude the proofs of Theorems 6.1 and 6.2.
	To this end, we will apply Theorem 6.3.
		
		Let $Z$ be the Banach space $L^2(K)\cap \BC(\R^N)$, where $\BC(\R^N)$
		is the Banach space of continuous and bounded functions endowed with the supremum norm. Let ${\cal S}(s)$ be the semigroup generated by the semilinear equation \eqref{eq: 7.20}.
		
		As we have seen before, the solutions $v$ to \eqref{eq: 7.20} with data
		inicial $u_0\in L^2(K)\cap \BC(\R^N)$ satisfy
		$$u\in C\bigl([0,\infty); L^2(K)\cap \BC(\R^N)\bigr).$$
		
		On the other hand,
		$$\int v(y,s)\,dy=\int u_0(y)\,dy,\qquad\forall s\ge 0.$$
		
		Therefore the function
		\begin{align*}
		\psi:&\, L^2(K)\cap \BC(\R^N)\to\R\\
		&\varphi\to \psi(\varphi)=\int\varphi(y)\,dy
		\end{align*}
		satisfies hypothesis (6) of Theorem 6.3.
		
		On the other hand, by virtue of Proposition 6.6 we know that the function
		$\phi(\cdot)=\|\cdot\|_1$ satisfies hypothesis (7) of Theorem 6.3.
		
		Applying Theorem 6.3 we obtain that $\forall M\in \R$,
		\begin{equation} \label{eq: 7.57}
		\text{ \eqref{eq: 7.5} admits a unique solution } f_M\in L^2(K)\cap \BC(\R^N) \text{ such that } \int f_M(y)\,dy=M,
		\end{equation}
		\begin{align} \label{eq: 7.58}
			\forall u_0\in  L^2(K)\cap \BC(\R^N) \text{ such that }
			\int u_0(y)\,dy=M,  
			\text{ the solution v to \eqref{eq: 7.20} } \\
			\text{ satisfies } \nonumber \\
			\\
			\|v(s)-f_M\|_{L^2(K)\cap L^\infty(\R^N)}  \to0,
			 when  s\to\infty. \nonumber
		\end{align}
		
		We conclude the proofs of Theorems 6.1 and 6.2 in several steps.
		
		\subparagraph*{Step  1.}
		Regularity and positivity of the profiles $f_M$.
		
		The function
		$$u_M(x,t)=(t+1)^{-N/2} f_M(x/\sqrt{t+1})$$
		is solution to \eqref{eq: 7.17} with $u_0=f_M$. therefore, since $f_M\in L^1(\R^N)$,
		by Theorem 4.3 of Chapter 4 we deduce  that $u(t)\in W^{2,p}(\R^N)$
		for all $t>0$ and $p\in(1,\infty)$, which implies that $f_M\in
		W^{2,p}(\R^N)$ for all $p\in(1,\infty)$.
		
		On the other hand, since $a\cdot\nabla(|f_M|^{1/N}
		f_M)\in\bigl(H^1(K)\bigr)^*$, by elliptic regularity it holds $f_M\in
		H^1(K)$. So, $a\cdot\nabla(|f_M|^{1/N} f_M)\in L^2(K)$ and again, by elliptic regularity, $f_M\in H^2(K)$. From the continuity of the injection $H^2(K)\subset W^{2,1} (\R^N)$ we deduce  that $f_M\in
		W^{2,1}(\R^N)$.
		
		By the comparison principle from Theorem 4.3 we see that if $u_0\ge0$, the solution $u$ to \eqref{eq: 7.17} satisfies $u\ge0$ and therefore, the solution $v$ of \eqref{eq: 7.20} satisfies $v\ge 0$.
		
		If $M\ge 0$, take $u_0\in L^2(K)\cap \BC(\R^N)$ such that
		$$u_0\ge 0,\quad \int u_0(y)\,dy=M.$$
		As the corresponding solution $v$ \eqref{eq: 7.20} satisfies $v\ge0$ and since $v(s)\to f_M$ in $L^\infty(\R^N)$  when  $s\to\infty$,
	we deduce that $f_M\ge 0$. In the same way we can show that
		$M\le0$, thus $f_M\le0$.
		
		By the Hopf maximum principle, we conclude that if $M>0$ (resp.\
		$M<0$) then $f_M(x)>0$ (resp.\ $f_M(x)<0$) for all $x\in\R^N$
		(cf.\ H.~M.~Protter and H.~I.~Weinberger \cite{PrW}, Th.\ 5, p.\ 61).
		
		As the nonlinearity $F(s)=|s|^{1/N}s$ satisfies $F\in
		C^\infty(\R-\{0\})$, by local elliptic regularity we deduce that
		$f_M\in C^\infty(\R^N)$ for all $M\in\R$.
		
		\subparagraph*{Step  2.} Uniqueness of the self-similar profiles.
		
		We have seen  that the self-similar profile es unique  in the clase $L^2(K)\cap
		\BC(\R^N)$. Veamos that es unique  in $L^1(\R^N)$. Suppose that there exist profiles $f_M$, $\tilde f_M\in L^1(\R^N)$ solutions to \eqref{eq: 7.5} such that
		$f_M\ne\tilde f_M$ and
		$$\int f_M(y)\,dy=\int\tilde f_M(y)\,dy=M.$$
		
		So,
		$$u_M(x,t)=(t+1)^{-N/2} f_M\bigl((t+1)^{-1/2} x\bigr);\quad
		\tilde u_M(x,t)=(t+1)^{-N/2}\tilde f_M\bigl((t+1)^{-1/2} x\bigr)$$
	are solutions to \eqref{eq: 7.17}. Clearly
		\begin{equation} \label{eq: 7.59}
			\int\bigr(u_M(x,t)-\tilde u_M(x,t)\bigr)^+\,dx=
			\int\bigr(f_M(x)-\tilde f_M(x)\bigr)^+\,dx,\qquad\forall t>0. 
		\end{equation}
		However the argument used in the proof of  Proposition 6.6 allows to prove that ${\int\bigr(u_M(x,t)-\tilde u_M(x,t)\bigr)^+\,dx}$ is a strictly decreasing function of $t$. This contradicts \eqref{eq: 7.59},
		proves the uniqueness in $L^1(\R^N)$ and concludes the proof of Theorem 6.1.
		
		\subparagraph*{Step  3.} Monotonicity of the self-similar profiles.
		
		The proof of the uniqueness of the self-similar profiles done in the previous part allows to prove that two self-similar profiles cannot intersect. In the case $M_1>M_2$
		this implies that $f_{M_1}>f_{M_2}$ at any point.		
		\subparagraph*{Step  4.} Asymptotic behavior for $u_0\in
		L^2(K)\cap \BC(\R^N)$.
		
		From \eqref{eq: 7.58} we deduce  that if $u_0\in L^2(K)\cap \BC(\R^N)$, the solution
		$u=u(x,t)$  to \eqref{eq: 7.17} satisfies \eqref{eq: 7.2} for all $p\in [1,\infty]$.
		
		Given $u_0\in L^1(\R^N)$ such that
		$$\int u_0(x)\,dx=M$$
	we approximate it through a sequence $\{u_{0,n}\}\subset L^2(K)\cap
		\BC(\R^N)$ such that
		$$\int u_{0,n}(x)\,dx=M; \qquad u_{0,n}\to u_0 \text{ in } L^1(\R^N)  when 
		n\to\infty. $$
		
		Let $u_n(x,t)$ be the solution to \eqref{eq: 7.17} with initial data $u_{0,n}$. We know that
		$$\|u_n(t)-u_M(t)\|_1\to 0  when  t\to\infty $$
		with $u_M$ being the self-similar solution of mass $M$. Since 
		$$\|u(t)-u_n(t)\|_1\le \|u_0-u_{0,n}\|_1, \qquad\forall t>0,\forall
		n\in \N$$
		we deduce 
		\begin{equation} \label{eq: 7.60}
			\|u(t)-u_M(t)\|_1\to 0  when  t\to\infty.
		\end{equation}
		
		Regarding the rescaled solutions $u_\lambda(x,t)=\lambda^N
		u(\lambda x,\lambda^2 t)$, \eqref{eq: 7.60} is equivalent to
		\begin{equation} \label{eq: 7.61}
			u_\lambda(\cdot,1)\to f_M(\cdot) \text{ in } L^1(\R^N)  when 
			\lambda\to\infty.
		\end{equation}
		
		On the other hand, from Theorems 4.3 and 4.4 of Chapter 4 we deduce that
		\begin{equation} \label{eq: 7.62}
			\{u_\lambda(\cdot,1)\}_{\lambda>0} \hbox{ is bounded in }
			W^{1,p}(\R^N),\qquad\forall p\in[1,\infty)
		\end{equation}
	and in particular
		\begin{equation} \label{eq: 7.63}
			u_\lambda(\cdot,1)\to f_M(\cdot) \text{ in } L^p(\R^N), \forall
			p\in[1,\infty].
		\end{equation}
		
		As It can easily be shown , \eqref{eq: 7.63} is equivalent to \eqref{eq: 7.2}. This concludes the proof of Theorem 6.2. \qed
		
		\section{More general non-linearities}
		
		The purpose of this section is to prove the following generalization of Theorem 6.2.
		
		\begin{thm}
			Suppose that the non-linearity $F\in C^1(\R)$ satisfies $\eqref{eq: 7.3}$.
			Then, for all $u_0\in L^1(\R^N)$ such that ${\int u_0(x)\,dx=M}$, la
			solution $u=u(x,t)$ to $\eqref{eq: 7.1}$ satisfies $\eqref{eq: 7.2}$ For all
			$p\in[1,\infty]$, where $u_M$ is the self-similar solution to $\eqref{eq: 7.4}$.
		\end{thm}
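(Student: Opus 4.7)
The plan is to reduce the general non-homogeneous problem to the homogeneous case already handled by Theorem~6.2, by passing to self-similar variables and treating \eqref{eq: 7.1} as an asymptotically autonomous perturbation of \eqref{eq: 7.20}, then invoking the stability theorem for perturbed dynamical systems of Galaktionov and V\'azquez \cite{GaV}. First I would set $v(y,s) := e^{Ns/2} u(e^{s/2}y, e^s-1)$ as in \eqref{eq: 7.19}. A direct computation shows that $v$ satisfies
\begin{equation*}
v_s + Lv - \tfrac{N}{2}v = a\cdot\nabla_y \Phi(v,s), \qquad \Phi(v,s) := e^{(N+1)s/2}\, F\bigl(e^{-Ns/2}v\bigr),
\end{equation*}
and writing $F(w) = |w|^{1/N}w\,(1+\delta(w))$ with $\delta(w)\to 0$ as $w\to 0$ (which is exactly hypothesis \eqref{eq: 7.3}) gives $\Phi(v,s) = |v|^{1/N}v + R(v,s)$, where $R(v,s) = |v|^{1/N}v\,\delta(e^{-Ns/2}v)$. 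Thus the equation for $v$ is a vanishing perturbation of the autonomous limit equation \eqref{eq: 7.20}, and the divergence structure of the nonlinearity preserves mass, so $\int v(y,s)\,dy = M$ for all $s \geq 0$.

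Next I would establish the uniform bounds and compactness needed to run a dynamical-systems argument. Theorems 4.3 and 4.7 apply verbatim (they only require $F\in C^1$ with $F(0)=0$) and yield $\|u(t)\|_\infty \leq C(t+1)^{-N/2}$ together with the analogous gradient estimates; translated to $v$ this gives $v \in L^\infty(0,\infty; L^1(\R^N)\cap L^\infty(\R^N))$ and $v \in L^\infty(1,\infty; W^{1,p}(\R^N))$ for every $p$. In particular $\|e^{-Ns/2}v(\cdot,s)\|_\infty \to 0$ uniformly, so $\|R(v(\cdot,s),s)\|_\infty \to 0$ as $s\to\infty$. With $R$ acting as a bounded, asymptotically negligible perturbation, the proof of Proposition~6.4 carries through and delivers $v\in L^\infty(1,\infty; H^1(K))$ together with relative compactness of $\{v(s)\}_{s \geq 1}$ in $L^2(K) \cap L^\infty(\R^N)$.

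Finally, I would identify the $\omega$-limit set. Let $\eta \in \omega(u_0)$ and pick $s_n\to\infty$ with $v(s_n)\to\eta$ in $L^2(K)\cap L^\infty(\R^N)$. The time-shifted trajectories $V_n(\sigma) := v(\cdot, s_n+\sigma)$ satisfy the perturbed equation with perturbation $R(\cdot, s_n+\sigma)$ tending to zero uniformly on bounded sets; by the compactness of Step~2 and continuous dependence on data for the autonomous flow, one passes to the limit along a subsequence and obtains $V_n\to V$ in $C_{\mathrm{loc}}([0,\infty);L^2(K))$ where $V$ solves the autonomous equation \eqref{eq: 7.20} with $V(0)=\eta$. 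This is exactly the content of the Galaktionov--V\'azquez stability result \cite{GaV}. Since $\int \eta = M$, Theorem~6.2 gives $V(\sigma)\to f_M$ as $\sigma\to\infty$. Invariance of $\omega(u_0)$ then places $V(\sigma)$ inside $\omega(u_0)$ for all $\sigma \geq 0$; and the strict $L^1$-contraction of Proposition~6.6 applied to $V(\sigma)$ against the stationary $f_M$ forces $\eta = f_M$, for otherwise $\|V(\sigma)-f_M\|_1$ would be strictly decreasing, in conflict with the fact that this quantity must take values in the compact set $\{\|\zeta-f_M\|_1 : \zeta \in \omega(u_0)\}$ accumulating only at $0$. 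Hence $\omega(u_0) = \{f_M\}$, so $v(s)\to f_M$ in $L^2(K)\cap L^\infty(\R^N)$, and \eqref{eq: 7.2} for arbitrary $u_0\in L^1(\R^N)$ follows by density via the $L^1$-contraction \eqref{eq: 5.9}, exactly as in Step~4 of Theorem~6.2. The main obstacle is the perturbed-stability step: verifying the hypotheses of \cite{GaV} in the weighted-Sobolev setting, justifying the passage to the limit $V_n\to V$ rigorously despite the nonlinear perturbation $R$ carrying a derivative, and using the strict $L^1$-contraction to isolate the unique stationary solution of mass $M$.
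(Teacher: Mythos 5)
Your overall route is the same as the paper's: pass to self-similar variables, view \eqref{eq: 7.1} as an asymptotically autonomous perturbation of the homogeneous equation \eqref{eq: 7.20}, establish compactness of trajectories in weighted Sobolev spaces, invoke the Galaktionov--V\'azquez stability theorem with the $L^1$-contraction supplying uniform stability of the limit set $\{f_M\}$ (equivalently $\{0\}$ after subtracting $f_M$), and finish by density in $L^1(\R^N)$.

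There is, however, one concrete error. You assert that the gradient estimates ``apply verbatim (they only require $F\in C^1$ with $F(0)=0$)'' and conclude $v\in L^\infty(1,\infty;W^{1,p}(\R^N))$ and $v\in L^\infty(1,\infty;H^1(K))$. The gradient-estimate theorem of Chapter 4 requires the pointwise derivative bound $|F'(s)|\le C|s|^{1/N}$ of \eqref{eq: 5.28}; its proof estimates $\|F'(\lambda^{-N}u_\lambda)\|_\infty$ inside the Duhamel term, and this is exactly where the hypothesis on $F'$ enters. The hypothesis \eqref{eq: 7.3} constrains only $F(s)/(|s|^{1/N}s)$ as $s\to0$ and says nothing about $F'$, so those estimates are \emph{not} available here --- the paper states this explicitly and substitutes the fractional estimates $\|u(t)\|_{1-\epsilon,p}\le C t^{-\np-(1-\epsilon)/2}$, giving only $v\in L^\infty(1,\infty;H^{1-\epsilon}(K))\cap L^\infty(1,\infty;W^{1-\epsilon,p}(\R^N))$. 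This weaker regularity still yields relative compactness of $\{v(s)\}$ in $L^2(K)$ (the injection $H^{1-\epsilon}(K)\subset L^2(K)$ is still compact) and still suffices, by interpolation with the uniform $L^\infty$ bound, to upgrade $L^2(K)$-convergence to convergence in every $L^p(\R^N)$; but your proof as written rests on bounds that do not hold under the stated hypotheses and needs this repair. A secondary caution: your closing La Salle-type argument (strict $L^1$-contraction against $f_M$ forcing $\eta=f_M$) leans on invariance of $\omega(u_0)$ under the \emph{limit} flow, which for an asymptotically autonomous system is precisely what the Galaktionov--V\'azquez theorem packages; make sure you verify its uniform-stability hypothesis for $\Omega=\{0\}$ via \eqref{eq: 7.76} rather than asserting the conclusion directly.
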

		
		\begin{proof}
			Thanks to the estimate $L^1-L^\infty$ of Theorem 4.3 in Chapter 4, it is enough to study the case where $u_0\in
			L^1(\R^N)\cap L^\infty (\R^N)$. By  Theorem 4.3 of Chapter 4 we have
			\begin{equation} \label{eq: 7.64}
				\|u(t)\|_p\le C_p(t+1)^{-\np},\qquad\forall t >0 
			\end{equation}
			for all $p\in[1,\infty]$.
			
			As we have not made any assumption on the behavior of the
			derivative of $ F $, Theorem 4.4 of Chapter 4 is not applicable. 
			By the same method of proof we obtain
			\begin{equation} \label{eq: 7.65}
				\|u(t)\|_{1-\epsilon,p}\le C(\epsilon,p)
				t^{-\np-\frac{(1-\epsilon)}{2}}, \qquad\forall t>0
			\end{equation}
			for all $\epsilon>0$, $p\in[1,\infty]$, with
			$\|\cdot\|_{1-\epsilon,p}$ being the norm in $W^{1-\epsilon,p}(\R^N)$.
			
			The function $v$ defined in \eqref{eq: 7.19} satisfies in this case the following
			non-autonomous equation:			\begin{align} \label{eq: 7.66}
			\begin{dcases} 
			v_s-\Delta v-y\cdot\frac{\nabla v}{2}-\frac{N}{2}v=
			e^{s(N+1)/2} a\cdot\nabla\bigl(F(e^{-Ns/2} v)\bigr) &\hbox{en }\R^N\times(0,\infty)\\
			v(y,0)=u_0(y).
			\end{dcases}
			\end{align}
			Anew, \eqref{eq: 7.2} is equivalent to proving that
			\begin{equation} \label{eq: 7.67}
				 v(s)\to f_M \text{ in } L^p(\R^N)  when  s\to\infty
			\end{equation}
			for all $p\in[1,\infty]$.
			
			The density argument used in the previous section shows that it's enough to consider initial data $u_0\in L^2(K)\cap\BC(\R^N)$.
			
			In view of \eqref{eq: 7.64}--\eqref{eq: 7.65} we know that
			\begin{align} \label{eq: 7.68}
			v&\in L^\infty\bigl(0,\infty; L^p(\R^N)\bigr),\quad\forall p\in[1,\infty]\\
			v&\in L^\infty\bigl(1,\infty;W^{1-\epsilon,p}(\R^N)\bigr),\quad\forall
			p\in[1,\infty],\forall\epsilon>0. \nonumber
			\end{align}
			
			Using \eqref{eq: 7.68}, the condition \eqref{eq: 7.3} imposed on the nonlinearity and by the method used in the proof  of Proposition 6.4 we obtain 
			\begin{align} \label{eq: 7.69}
			v\in L^\infty\bigl(0,\infty; L^2(K)\bigr)\\
			v\in L^\infty\bigl(1,\infty;H^{1-\epsilon}(K)\bigr),&\forall\epsilon>0. \nonumber
			\end{align} 
			and therefore the trajectory $\{v(s)\}_{s\ge0}$ is relatively compact 
			in $L^2(K)$.
			
			We will sue the following result on asymptotic behavior for perturbed dynamical systems, due to V.~Galaktionov and J.~L.~Vázquez
			\cite{GaV}.
			
			\begin{thm}[\cite{GaV}]
		Let us consider a dynamical system in a Banach space $X$ generated by the evolution equation
				\begin{equation} \label{eq: 7.70}
					w_t=A(w).
				\end{equation}
				and a perturbation
				\begin{equation} \label{eq: 7.71}
					w_t=B(t,w).
				\end{equation}
				
				Assume that the following properties hold:
				\begin{enumerate}
					\item[(a)]
					The trajectories $\{w(t)\}_{t\ge0}$ of $\eqref{eq: 7.71}$ are relatively compact in $X$. Moreover, given $\tau>0$, if $w^\tau(t)=w(t+\tau)$, the set $\{w^\tau\}_{\tau>0}$ is relatively compact in $L_{\rm loc}^\infty (0,\infty;X)$.
					
					\item[(b)]
					Given a solution $w\in C\bigl([0,\infty);X)$ to \eqref{eq: 7.71} and if $t_j \rightarrow \infty$ is such that $w^{t_j}(\cdot)$ converges to some function $v \in L^\infty_{\rm loc}(0, \infty; X)$, then  $v$ is a solution to $\eqref{eq: 7.70}$.
					
					\item[(c)]
					The $\omega$-limit set of the equation \eqref{eq: 7.70} in $X$:
					$$\left\{f\in X \Biggm|
					\vcenter{\hsize.5\textheight\noindent
						$\exists w\in C\bigl([0,\infty);X\bigr)$ solution to \eqref{eq: 7.70}
						and a sequence $t_j\to\infty$ such that $w(t_j)\to f$ in $X$}
					\right\}$$
					is compact in $X$ and uniformly stable in the following sense: For all  $\epsilon>0$ there exists $\delta=\delta(\epsilon)>0$ such that if $w$ is a solution to $\eqref{eq: 7.70}$ with $d\bigl(w(0),\Omega\bigr)\le\delta$ then
					$d\bigl(w(0),\Omega\bigr)\le\epsilon$, for all $t>0$.
				\end{enumerate}
				
				Then, under these assumptions, the $\omega$-limit sets of the
				solutions $w\in C\bigl([0,\infty);X)$ to $\eqref{eq: 7.71}$ are contained in
				$\Omega$.
			\end{thm}
			
			With the goal of applying this result to the system \eqref{eq: 7.60} we decompose
			$v$ in the following way
			\begin{equation} \label{eq: 7.72}
			v(y,s)=f_M(y)+\tilde v(y,s).
			\end{equation}
			As ${\int v(y,s)\,dy=\int f_M(y)\,dy=M}$ for all $s\ge0$, $\tilde
			v(s)\in\varphi_1^\perp$ for all $s\ge0$. On the other hand, \eqref{eq: 7.67} is equivalent to
			\begin{equation} \label{eq: 7.73}
				\tilde v(s)\to0 \text{ in } L^p(\R^N),  when  s\to\infty. 
			\end{equation}
			The function $\tilde v$ is solution to the system:
			\begin{align} \label{eq: 7.74}
			\tilde v_2+L\tilde v-\frac{N}{2}\tilde v&=e^{(N+1)s/2}a\cdot\nabla
			\Bigl[F\Bigl(e^{-sN/2}\bigl(f_M+\tilde v(s)\bigr)\Bigr)\Bigr]\\
			&\qquad-a\cdot\nabla(|f_M|^{1/N} f_M)\quad in \quad \R^N\times(0,\infty)\\
			\tilde v(0)&=u_0-f_M. \nonumber 
			\end{align}  
			
	We apply the Theorem 6.5 with $X=L^2(K)\cap\varphi_1^\perp$. The system
			\eqref{eq: 7.74} is \eqref{eq: 7.71} and the non-perturbed system \eqref{eq: 7.70} is
			\begin{align} \label{eq: 7.75}
				{\tilde v_s+L\tilde
					v-\frac{N}{2}\tilde v=a\cdot\nabla\bigl(|f_M+\tilde v|^{1/N}
					(f_M+\tilde v)\bigr)}\\
				{-a\cdot\nabla(|f_M|^{1/N}f_M)
					\text{ in } \R^N\times(0,\infty).} 
			\end{align}
			
			We see that the three assumptions of Theorem 6.5 are satisfied:
			\begin{enumerate}
				\item[(a)]
				From \eqref{eq: 7.69} we deduce  that $\tilde v\in
				L^\infty\bigl(1,\infty;H^{1-\epsilon}(K)\bigr)$. therefore the trajectories $\{\tilde v(s)\}_{s\ge0}$ of \eqref{eq: 7.74} are relatively compact in $L^2(K)$. By parabolic regularity, one can show that
				$v_s\in L^\infty\Bigl(1,\infty; \bigl(H^\epsilon(K)\bigl)^*\Bigl)$ for all $\epsilon>0$. As $\tilde v\in L^\infty\Bigl(1,\infty;
				H^{1-\epsilon}(K)\Bigr)\cap
				W^{1,\infty}\Bigl(1,\infty;\bigl(H^\epsilon(K)\bigr)^*\Bigr)$, by Aubin-like compactness results (cf.\cite{Si}) we deduce  that the family of translations $\{\tilde v^\tau\}_{\tau>0}$ is relatively compact in $L_{\rm loc}^\infty\bigl(0,\infty;L^2(K)\bigr)$.
				
				\item[(b)]
				By virtue of the condition \eqref{eq: 7.3} that the non-linearity satisfies, it can be easily be checked that if $s_j\to\infty$, then the translations $\{\tilde
				v^{s_j}\}_j$ converge in $L_{\rm loc}^\infty\bigl(0,\infty; L^2(K)\bigr)$
				to a solution to the equation \eqref{eq: 7.75}.
				
				The $\omega$-limit set in $L^2(K)\cap \varphi_1^\perp$ reduces to
				$\Omega=\{0\}$ because, due to  Theorem 6.2, all solution to \eqref{eq: 7.75} with 
				initial data in $L^2(K)\cap\varphi_1^\perp$ satisfies
				$$\tilde v(s)\to0 \text{ in } L^2(K) \quad  \text{ when } \quad  s\to\infty.  $$
				
				Let us see lastly the uniform stability of $\Omega=\{0\}$. By the $L^1(\R^N)$ contraction property we see that if $\tilde v$ and $\tilde w$ are two solutions to \eqref{eq: 7.75} with initial data $\tilde v_0$ and $\tilde w_0$ it holds
				$$\|\tilde v(s)-\tilde w(s)\|_1\le\|\tilde v_0-\tilde w_0\|_1
				\le C\|\tilde v_0-\tilde w_0\|_K,\qquad\forall s\ge 0.$$
				In particular, taking $\tilde w_0=0$ it holds
				\begin{equation} \label{eq: 7.76}
					\|\tilde v(s)\|_1\le \|\tilde v_0\|_1\le C\|\tilde v_0\|_K,
					\qquad\forall s\ge0.
				\end{equation}
				
				Using this estimates and techniques from the proof of Proposition 6.4 we obtain the stability of $\Omega=\{0\}$.
				
				As a consequence of Theorem 6.5 we deduce that the solutions to \eqref{eq: 7.74}
				satisfy
				$$\tilde v(s)\to 0 \text{ in } L^2(K)  \quad \text{ when } \quad s\to\infty.  $$
				or, equivalently, the solutions \eqref{eq: 7.66} satisfy
				$$v(s)\to f_M  \text{ in } L^2(K) \quad \text{ when } \quad  s\to\infty.  $$
				
			Since on the other hand, $\{v(s)\}_{s\ge0}$ is bounded in
				$W^{1-\epsilon,p}(\R^N)$ for all $\epsilon>0$,  $p\in[1,\infty]$, one obtains \eqref{eq: 7.67}.
				
	This concludes the proof of Theorem 6.4.
			\end{enumerate}
		\end{proof}
		
		\chapter{Comments}

		\hskip\parindent1.---\enspace
		The techniques from previous chapters 
		allows us to address more general equations of the form
		\begin{align} \label{eq: 8.1}
		u_t-\Delta u&=\div\bigl(F(u)\bigr)\text{ in } \R^N\times(0,\infty)\\
		u(0)&=u_0\in L^1(\R^N) \nonumber 
		\end{align} 
		with $F=(F_1,\ldots ,F_N)\in C^1(\R;\R^N)$ such that $F(0)=0$.
		
		Let
		\begin{align} 
			b_i&=\lim_{|s|\to 0}\frac{F_i(s)}{s} \label{eq: 8.2}\\
			\text{y}
			a_i&=\lim_{|s|\to 0} \frac{F_i(s)-b_is}{|s|^{1/N}s} \label{eq: 8.3}. 
		\end{align}
		The function
		\begin{equation} \label{eq: 8.4}
			v(x,t)=u(x-bt,t) 
		\end{equation}
		with $b=(b_1,\ldots,b_N)$ satisfies
		\begin{align} \label{eq: 8.5}
		v_t-\Delta v&=\div\bigl(H(s)\bigr)\\
		v(0)&=u_0 \nonumber
		\end{align}
		con
		\begin{equation} \label{eq: 8.6}
			H_i(s)= F_i(s)-b_is.
		\end{equation}
		By virtue of \eqref{eq: 8.3}, by the methods shown in Chapter 7, one can prove that the solutions to \eqref{eq: 8.5} behave like the self-similar solutions
		to the equation
		\begin{equation} \label{eq: 8.7}
			v_t-\Delta v=a\cdot\nabla (|v|^{1/N}v)
		\end{equation}
		with $a=(a_1,\ldots,a_N)$.
		
		In this way, we obtain the following results .
		\begin{thm}
		Let $F\in C^1(\R;\R^N)$ be such that los l\'imites $\eqref{eq: 8.2}$ and $\eqref{eq: 8.3}$ exist.
			Let $u_0\in L^1(\R^N)$ be such that
			$$\int u_0(x)\,dx=M.$$
			Then the solution $u=u(x,t)$ to $\eqref{eq: 8.1}$ satisfies
			$$t^{\np}\|u(t)-u_M(t)\|_p\to 0  \quad \text{ when } \quad  t\to\infty $$
			for all $p\in[1,\infty]$, where
			$$u_M(x,t)=t^{-\frac{N}{2}} f_M\bigl(t^{-1/2}(x+bt)\bigr)$$
			and $f_M$ is the self-similar profile of mass $M$ to the equation $\eqref{eq: 8.7}$.
		\end{thm}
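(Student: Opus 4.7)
The proof reduces to the divergence-free-drift case treated in Theorem 6.4 via a Galilean change of variables that absorbs the linear part of the convection, leaving only the genuinely nonlinear $|s|^{1/N}s$-type contribution that governs the self-similar asymptotic behavior.

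First, I would set $v(x,t) = u(x-bt,t)$ with $b = (b_1,\ldots,b_N)$ as in \eqref{eq: 8.4}. A direct chain-rule computation verifies that $v$ solves \eqref{eq: 8.5}--\eqref{eq: 8.6}, i.e.
\[
v_t - \Delta v = \div\bigl(H(v)\bigr), \qquad H_i(s) = F_i(s) - b_i s,
\]
with initial data $v(0) = u_0$. Each $H_i$ is in $C^1(\R)$, vanishes at the origin, and by \eqref{eq: 8.3} satisfies $\lim_{|s|\to 0} H_i(s)/(|s|^{1/N}s) = a_i$. Moreover, mass is preserved: $\int v(y,t)\,dy = M$ for all $t \ge 0$.

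Next, I would apply Theorem 6.4 to the problem satisfied by $v$. The hypothesis \eqref{eq: 7.3} is designed precisely for the scalar form $a\cdot\nabla(F(u))$, but after the change of variables our convection is $\div(H(v)) = \sum_i \partial_{x_i} H_i(v)$, with $H_i(s)\sim a_i |s|^{1/N}s$ at the origin. Writing $\div(H(v)) = a\cdot\nabla(|v|^{1/N}v) + \div(R(v))$ with $R_i(s) = H_i(s) - a_i |s|^{1/N}s = o(|s|^{(N+1)/N})$, the entire argument of Theorem 6.4 goes through verbatim: the decay estimates from Chapter 4 still hold for any $C^1$ nonlinearity vanishing at $0$; the analysis in weighted Sobolev spaces in Proposition 6.4 only relies on the bound $|H_i'(s)| \le C|s|^{1/N}$ near the origin together with the $L^\infty$ bound on $v$; and the perturbation framework of Galaktionov--V\'azquez (Theorem 6.5) treats the residual $\div(R(v))$ as a lower-order perturbation of the autonomous equation \eqref{eq: 8.7} because $R(s) = o(|s|^{(N+1)/N})$. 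Hence
\[
t^{\np}\|v(t) - v_M(t)\|_p \to 0 \quad \text{as } t\to\infty, \quad \forall p \in [1,\infty],
\]
where $v_M(y,s) = s^{-N/2}f_M(y/\sqrt{s})$ is the self-similar solution of \eqref{eq: 8.7} with mass $M$ and $f_M$ the profile given by Theorem 6.1.

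Finally, I would undo the transformation. Since the $L^p(\R^N)$ norm is translation invariant,
\[
\|u(t) - u_M(t)\|_p = \|v(\cdot+bt,t) - v_M(\cdot+bt,t)\|_p = \|v(t) - v_M(t)\|_p,
\]
with $u_M(x,t) = v_M(x+bt,t) = t^{-N/2}f_M\bigl(t^{-1/2}(x+bt)\bigr)$, giving the conclusion. The main obstacle is the last verification in the second step: checking that the Galaktionov--V\'azquez stability argument still applies with the residual term in divergence form; concretely, one must show that if $s_j \to \infty$ and $\tilde v^{s_j}$ converges in $L^\infty_{\mathrm{loc}}(0,\infty; L^2(K))$, then the term $\div(R(f_M+\tilde v^{s_j}))$ vanishes in the limit. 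This follows from $R_i(s) = o(|s|^{(N+1)/N})$ and the uniform $L^\infty$ bound $\|v\|_\infty \le C t^{-N/2}$ from Theorem 4.3, which rescales to $o(1)$ in the self-similar variables, but the bookkeeping (especially the weighted-space estimates for $\div R$) is the technical heart of the argument.
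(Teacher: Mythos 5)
Your proposal follows exactly the route the paper takes: the Galilean change of variables $v(x,t)=u(x-bt,t)$ reduces \eqref{eq: 8.1} to \eqref{eq: 8.5} with $H_i(s)=F_i(s)-b_is$ satisfying $H_i(s)/(|s|^{1/N}s)\to a_i$, after which the paper simply invokes ``the methods shown in Chapter 7'' (the Galaktionov--V\'azquez perturbation framework of Theorem 6.4) and undoes the translation. Your write-up is correct and in fact supplies more detail than the paper does on why that machinery extends to the vector-valued divergence-form nonlinearity.
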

		
		Obviously if $a=0$, $f_M$ is the profile of the heat kernel
		$$f_M=M(4\pi)^{-N/2}\exp\biggl(-\frac{|x|^2}{4}\biggr).$$

		\bigskip
		2.---\enspace
		The techniques presented in Chapter 7 allow for addressing systems of the form
		\begin{align} \label{eq: 8.8}
		u_t-\Delta u&=a\cdot\nabla (|u|^{1/N}u)+g(x,t)\\
		u(0)&=u_0.\nonumber 
		\end{align}  
		
		By means of a rescaling argument, it can be shown that this system admits self-similar solutions precisely if $g$ is of the form
		\begin{align} 
			g(x,t)=(t+1)^{-\frac{N}{2}-{\scriptscriptstyle 1}}h
			\bigl((t+1)^{-1/2}x\bigr) \label{eq: 8.9}\\
			\int h(y)\,dy=0. \label{eq: 8.10}
		\end{align}
	\eqref{eq: 8.10} assures the conservation of the mass of the solutions.
		
		In the usual self-similar variables, if $g$ is of the form
		\eqref{eq: 8.9} the system \eqref{eq: 8.8} is converted into		
		\begin{align} \label{eq: 8.11}
		v_s+Lv-\frac{N}{2}v&=a\cdot\nabla(|v|^{1/N}v)+h(y)\\
		v(0)&=u_0. \nonumber 
		\end{align}
		
	Ifi $h\in L^2(K)\cap L^\infty(\R^N)$ and \eqref{eq: 8.10} is satisfied (i.e., $h\in
		\varphi_1^\perp$), it is possible to think that there exists a one-parameter family of stationary solutions to \eqref{eq: 8.11}, i.e., of solutions of the elliptic system
		\begin{align} \label{eq: 8.12}
		Lf_M-\frac{N}{2} f_M&=a\cdot\nabla
		(|f_M|^{1/N}f_M)+h \text{ in } \R^N\\
		\int f_M(y)\,dy&=M \nonumber 
		\end{align}
		and that, if $u_0\in L^2(K)$ with ${\int u_0(y)\,dy=M}$ the solution to \eqref{eq: 8.11} satisfy
		$$v(s) \to f_M \text{ in } L^2(K)  \quad \text{ when } \quad s\to\infty.  $$
		
		The Theorems 7.1 and 7.2 of Chapter 7 provide affirmative answers to these questions for $h=0$.
		
		However, in the case $h=0$ an additional difficulty appears: the estimates of the trajectories in $L^1(\R^N)$. Indeed, multiplying 
		in \eqref{eq: 8.11} por $\sgn(v)$ we obtain
		$$\frac{d}{ds} \|v(s)\|_1\le \|h\|_1$$
		and therefore
		$$\|v(s)\|_1\le \|u_0\|_1 +\|h\|_1s,\qquad \forall s\ge 0$$
		which does not imply that $v\in L^\infty\bigl(0,\infty;L^1(\R^N)\bigr)$.
		
		In \cite{Z1} we have shown that if $h$ is sufficiently small in
		$L^1(\R^N;|y|)$, then the trajectory
		$v$ of \eqref{eq: 8.11} is bounded in $L^1(\R^N)$. Usual arguments allow us to show that $\{v(s)\}_{s\ge 0}$ is relatively compact in $L^2(K)$.
		
		Applying the techniques of Chapter 7 we obtain the following result.

		\begin{thm}
			Let $h\in L^2(K)\cap L^\infty(\R^N)$ be such that
			$$\int h(y)\,dy=0.$$
			There exists $\epsilon>0$ such that if
			$$\int |h(y)||y|\,dy<\epsilon$$
			the elliptic equation $\eqref{eq: 8.12}$ admits a unique solution $f_M\in
			L^1(\R^N)\cap L^\infty(\R^N)$ for every $M\in \R$.
			
			Moreover, for all $u_0\in L^1(\R^N)$ such that $$\int
				u_0(x)\,dx=M$$ the solution $u=u(x,t)$ to
			\begin{align*}
			u_t-\Delta u&=a\cdot\nabla
			(|u|^{1/N}u)+(t+1)^{-\frac{N}{2}-{\scriptscriptstyle1}}
			h\bigl((t+1)^{-1/2}x\bigr)\\
			u(0)&=u_0
			\end{align*}
			satisfies
			$$t^{\np}\|u(t)-u_M(t)\|_p\to 0  \quad \text{ when } \quad  t\to\infty $$
			for all $p\in[1,\infty]$ where $$u_M(x,t)=t^{-\frac{N}{2}}
				f_M(t^{-1/2}x).$$
		\end{thm}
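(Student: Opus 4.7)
The plan is to extend the dynamical-systems framework of Chapter 7 by passing to the self-similar variables \eqref{eq: 7.19}. A direct computation shows that if $u$ solves the forced equation with $u(0)=u_0$, then $v=v(y,s)$ solves exactly \eqref{eq: 8.11}, an autonomous equation in which the source $h$ is independent of $s$. Because $\int h\,dy=0$, the mass $\int v(y,s)\,dy=M$ is conserved along trajectories, and the elliptic problem \eqref{eq: 8.12} is simply the stationarity condition for \eqref{eq: 8.11}. The convergence \eqref{eq: 7.2} is then equivalent to $v(s)\to f_M$ in $L^p(\R^N)$ as $s\to\infty$. My strategy is to verify the hypotheses of Theorem 6.3 for the semigroup generated by \eqref{eq: 8.11}: relative compactness of trajectories in $L^2(K)\cap\BC(\R^N)$, conservation of mass (the continuous functional $\psi(v)=\int v\,dy$), and strict decrease of $\|v(s)-w(s)\|_1$ between distinct trajectories of the same mass (the functional $\phi(r)=r$).

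The crux of the argument, and the source of the smallness hypothesis, is a uniform $L^1(\R^N)$ bound on the trajectory $v(s)$. Multiplying the equation by $\sgn(v)$ and integrating only yields $\frac{d}{ds}\|v(s)\|_1\le\|h\|_1$, which produces linear growth and is useless. To do better I would invoke the Duoandikoetxea--Zuazua decomposition from Observation 1.3: since $\int h=0$, one may write $h=\div f$ with $\|f\|_{(L^1(\R^N))^N}\le C\|h\|_{L^1(\R^N;|y|)}$. This lets the forcing be merged with the divergence structure of the convection term $a\cdot\nabla(|v|^{1/N}v)=\div(a|v|^{1/N}v)$ and estimated against the decay of the semigroup generated by $L-\frac{N}{2}I$. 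Carrying out this estimate, as in \cite{Z1}, shows that if $\|h\|_{L^1(\R^N;|y|)}<\epsilon$ for a sufficiently small universal $\epsilon>0$, then $v\in L^\infty(0,\infty;L^1(\R^N))$.

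Once the uniform $L^1$ bound is secured, the remaining machinery transfers almost verbatim from Chapters 4 and 6. The multiplier method of Theorem 4.2 (testing against $|v|^{p-2}v$, using that the convection contribution integrates to zero and that $h\in L^2(K)\cap L^\infty$ enters as a controlled forcing) yields $v\in L^\infty(\tau,\infty;L^p(\R^N))$ for every $\tau>0$ and $p\in[1,\infty]$; a variant of Theorem 4.4 provides uniform $W^{1-\epsilon,p}(\R^N)$ bounds for every $\epsilon>0$. Reproducing the weighted Sobolev estimate of Proposition 6.4 — multiplying $\tilde v=v-M\varphi_1$ by $\tilde v K$ and invoking the spectral gap \eqref{eq: 7.28} together with the inequality \eqref{eq: 4.16} — gives $\tilde v\in L^\infty(0,\infty;L^2(K))\cap L^\infty(1,\infty;H^1(K))$. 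Combined with the $W^{1-\epsilon,p}$ bounds and the compactness of $H^1(K)\subset L^2(K)$, this yields relative compactness of $\{v(s)\}_{s\ge 0}$ in $L^2(K)\cap L^\infty(\R^N)$.

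The strict $L^1$ contraction survives the perturbation because the difference $z=v-w$ of two solutions of \eqref{eq: 8.11} solves the unforced equation (the $h$ cancels), so Proposition 6.6 together with its backward-uniqueness and unique-continuation lemmas apply unchanged, showing that $\|v(s)-w(s)\|_1$ is strictly decreasing whenever $v(0)\ne w(0)$ and both share mass $M$. Theorem 6.3 then produces, for every $M\in\R$, a unique stationary solution $f_M\in L^2(K)\cap\BC(\R^N)$ of mass $M$ and the convergence $v(s)\to f_M$ in $L^2(K)\cap L^\infty(\R^N)$ for every $u_0\in L^2(K)\cap\BC(\R^N)$. Uniqueness of $f_M$ in the larger class $L^1(\R^N)\cap L^\infty(\R^N)$ and the extension of the convergence to arbitrary $u_0\in L^1(\R^N)$ follow from the $L^1$ contraction by the density arguments of Steps 2 and 4 in Section 7.6, and the convergence is promoted from $L^2(K)$ to every $L^p$, $p\in[1,\infty]$, by interpolation against the uniform $W^{1-\epsilon,p}$ bound. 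The sole serious obstacle is the uniform $L^1$ estimate on trajectories; everything downstream is a mechanical adaptation of Chapters 4, 6 and 7.
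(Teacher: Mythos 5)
Your proposal is correct and follows essentially the same route as the paper, which itself only sketches this result: it isolates the uniform $L^1(\R^N)$ bound on trajectories as the one genuine obstacle (resolved by the smallness of $h$ in $L^1(\R^N;|y|)$, with the proof deferred to \cite{Z1}), and then transfers the compactness estimates, the strict $L^1$ contraction, and the dynamical-systems theorem of Chapter 7 to the forced autonomous system \eqref{eq: 8.11}. Your observation that the source $h$ cancels in the difference of two solutions, so that Proposition 6.6 and its unique-continuation lemmas apply unchanged, is precisely what makes that transfer legitimate.
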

		
It should be noted that both in relation to the existence of self-similar solutions and in relation to asymptotic behavior, there are no restrictions on the mass $ M $.
		
		The method used in the proof of this theorem, analogous to that developed in Chapter 7, will be applied immediately without restriction on the norm of $h$ in $L^1(\R^N;|x|)$ if we have an estimate in $L^1(\R^N)$ for the solution to \eqref{eq: 8.4} with initial data $u_0=0$.

		\bigskip
		3.---\enspace
		The extension of the results of Chapter 7 to  convection-diffusion equations with degenerate diffusion of the type
		\begin{align*}
			u_t-\Delta (|u|^{m-1}u) &=a\cdot\nabla (|u|^{q-1}u) \text { in }
			\R^N\times(0,\infty),\\
			\text{ or } \\
			u_t-\div(|\nabla u|^{p-2}\nabla u) &=a\cdot\nabla (|u|^{q-1}u)  \text{ in }
			\R^N\times(0,\infty)
		\end{align*}
		is an open problem. 
		It would be particularly interesting to know if for these equations, the strong contraction property is satisfied in $L^1(\R^N)$.

		\bigskip
		4.---\enspace
		In the works of I.~L.~Chern and T.~P.~Liu \cite{CL} and S.~Kawashima \cite{Kw1, Kw2} 
the asymptotic behavior of parabolic convection-diffusion systems and mixed systems of parabolic-hyperbolic type is described in one spatial dimension.

It would be interesting to extend these results to higher spatial dimensions using the self-similar solutions built in Chapter 7.		
		
		\bigskip
		5.---\enspace
Let $\Omega$ be a regular domain of $\R^N$. Let $F\in
		C^1(\R,\R^N)$ be such that $F(0)=0$. Let us consider the evolution equation
		\begin{equation} \label{eq: 8.13}
			u_t-\Delta u=\div\bigl(F(u)\bigr) \text{ in } \Omega\times(0,\infty).
		\end{equation}
	
		Integrating formally this equation on $\Omega$ it is observed that for the mass of the solutions to be preserved, it is necessary to impose the following boundary condition:
		\begin{equation} \label{eq: 8.14}
			\frac{\partial u}{\partial\nu}=\nu\cdot F(u) \text{ on } \partial\Omega \times(0,\infty)
		\end{equation}
		where $\nu$ is the exterior normal of the domain $\Omega$.
		
This boundary condition ensures that the flow across the border is zero.
		
The system \eqref{eq: 8.13}--\eqref{eq: 8.14} can be completed the initial condition		
\begin{equation} \label{eq: 8.15}
			u(0)=u_0\in L^1(\Omega).
		\end{equation}
		
It would be interesting to study existence and uniqueness of solutions for this problem, as well as its asymptotic behavior (in the article \cite{AM}, H. ~ Amann studies the existence and uniqueness of solutions for parabolic equations with nonlinear boundary conditions).
		
The asymptotic behavior may strongly depend on the domain $ \Omega $. In particular, whether or not it is bounded.
		
If $ \Omega $ is bounded, the existence and uniqueness of a stationary solution for each mass can be expected. In other words,  one can expect that the system
		\begin{align} \label{eq: 8.16}
		&-\Delta f_M=\div\bigl(F(f_M)\bigr)&\hbox{ in $\Omega$} \nonumber \\
		&\frac{\partial f_M}{\partial\nu}=\nu\cdot F(f_M)&\hbox{ on $\partial\Omega$}\\
		&\int_\Omega f_M(x)\,dx=M \nonumber 
		\end{align}
		has a unique solution for all $M\in\R$. 

It would be interesting to study the existence and uniqueness of these stationary solutions, and then, to address the asymptotic behavior of the solutions of the evolution problem.
		
When $\Omega$ is a cone of $\R^N$, using the self-similar variables and defining
		\begin{equation} \label{eq: 8.17}
			v(u,s)=e^{sN/2}u(e^{s/2}y,e^s-1)
		\end{equation}
		We see that $v$ satisfies
		\begin{align} \label{eq: 8.18}
		&v_s+Lv-\frac{N}{2} v=e^{-(N+1)s/2} \div\bigl(F(e^{sN/2}v)\bigr)
		&\hbox{ in $\Omega\times(0,\infty)$} \nonumber\\
		&\frac{\partial v}{\partial\nu}=-e^{-(N+1)s/2} \bigl(\nu\cdot F(e^{sN/2}v)\bigr)
		-\frac{(y\cdot\nu)}{2}v&\hbox{ on  $\partial\Omega\times(0,\infty)$}\\
		&v(0)=u_0. \nonumber 
		\end{align}
		
		The asymptotic behavior of the solutions will then depend on the properties of $F$. If $F=(F_1,\ldots,F_N)$ with $F_1(s)=|s|^{q_i-1}s$ in an neighbourhood of the origin, ${q_i>\frac{N+1}{N}}$, one may expect that solutions $v$ behave like those of the linear parabolic equation		
		\begin{align} \label{eq: 8.19}
		v_s+Lv-\frac{N}{2}&=0&\Omega\times(0,\infty)\\
		\frac{\partial\nu}{\partial\nu}+(y\cdot\nu)\frac{v}{2}&=0&\hbox{ 
			$\partial\Omega\times(0,\infty)$}\\
		v(0)&=u_0.
		\end{align}		
		On the other hand, one might expect that  $s\to\infty$ the solutions to
		\eqref{eq: 8.19} converge to a steady states, namely solutions to the elliptic problem
		\begin{align} \label{eq: 8.20}
		Lf_M-\frac{N}{2} f_M&=0&\hbox{$\Omega$} \nonumber \\
		\frac{\partial f_M}{\partial\nu}+\frac{(y\cdot\nu)}{2} f_M&=0&\hbox{$\partial\Omega$}\\
		\int_\Omega f_M(x)\,dx&=M. \nonumber 
		\end{align} 
		
		In the case in that $F_i(s)=a_1|s|^{1/N}s$ in a neighbourhood of the origin, perhaps the solutions to \eqref{eq: 8.18} behave like the autonomous system
		\begin{align} \label{eq: 8.21}
		v_s+Lv-\frac{N}{2}v&=a\cdot\nabla(|v|^{1/N}v)&\hbox{ in
			$\Omega\times(0,\infty)$}\\
		\frac{\partial\nu}{\partial\nu}&=-(a\cdot \nu)|v|^{1/N}v-\frac{(y\cdot\nu)}{2} v
		&\hbox{ on $\partial\Omega\times(0,\infty)$} \nonumber \\
		v(0)&=u_0 \nonumber 
		\end{align}
	which in turn will converge into the stationary solutions of the following elliptic equation
		\begin{align} \label{eq: 8.22}
		Lf_M-\frac{N}{2} f_M=a\cdot\nabla(|f_M|^{1/N} f_M)&\quad\hbox{ in $\Omega$}\\
		\frac{\partial f_M}{\partial\nu}+\frac{(y\cdot\nu)}{2} f_M=
		-(a\cdot \nu)|f_M|^{1/N}f_M&\quad\hbox{ on $\partial\Omega$}\\
		\int_\Omega f_M(y)\,dy=M. \nonumber 
		\end{align} 

All the results stated in this section regarding the system \eqref{eq: 8.13} - \eqref{eq: 8.15} are nothing more than conjectures. They are therefore open problems.
		
		\bigskip
		6.---\enspace
The methods developed in the previous chapters allows also to address the asymptotic behavior of the solutions to 
convection-diffusion equations with initial data $ u_0 $ such that
		$$u_0(x)\to l  \quad \text{ when } \quad  |x|\to\infty $$
		with $l\in\R-\{0\}$, under the condition that
		\begin{equation} \label{eq: 8.23}
			u_0-l\in L^1(\R^N). 
		\end{equation}
		
		Let us consider for example a homogeneous nonlinearity:
		\begin{align} \label{eq: 8.24}
		u_t-\Delta u&=a\cdot\nabla(|u|^{q-1}u)\\
		u(0)&=u_0. \nonumber 
		\end{align} 
		
	Assuming that $u_0$ satisfies \eqref{eq: 8.23}, we observe that
		$$v(x,t)=u(x,t)-l$$
		satisfies
		\begin{align} \label{eq: 8.25}
		v_t-\Delta v&=a\cdot\nabla\bigl(g(v)\bigr)&\hbox{ in
			$\R^N\times(0,\infty)$}\\
		v(0)&=v_0 \nonumber 
		\end{align}
		with $v_0=u_0-l$ and
		\begin{equation} \label{eq: 8.26}
			g(s)=|s+l|^{q-1}(s+l)-|l|^{q-1}l.
		\end{equation}
		
		If $q\ge1$, then $g\in C^1(\R)$, and		\begin{equation} \label{eq: 8.27}
			g(s)=q|l|^{q-1}s+\frac{q(q-1)}{2}l^{q-2}s^2+o(s^2).
		\end{equation}
		
		The function
		\begin{equation} \label{eq: 8.28}
			w(x,t)=v(x-aq|l|^{q-1}t,t) 
		\end{equation}
		satisfies
		\begin{align} \label{eq: 8.29}
		w_t-\Delta w&=a\cdot\nabla\bigl(h(w)\bigr)\\
		w(0)&=v_0 \nonumber
		\end{align}
		with
		\begin{equation} \label{eq: 8.30}
			h(s)=q\frac{(q-1)}{2} l^{q-2}s^2+o(s^2). 
		\end{equation}
		
		In dimensions $N>1$ it can be seen that
		$$\lim_{|s|\to0}\frac{h(s)}{|s|^{1/N}s}=0.$$
		By virtue of the results from Chapter 6 we obtain 
		$$t^{-\np}\|w(t)-MG(t)\|_p\to 0  \quad \text{ when } \quad  t\to\infty $$
		for all $p\in[1,\infty]$, where
		\begin{equation} \label{eq: 8.31}
			M=\int\bigl(u_0(x)-l\bigr)\,dx 
		\end{equation}
		and $G$ is the heat kernel.
		
		Thus, we obtain:
		\begin{thm}
			If $N>1$, $q\ge1$ and the initial data $u_0$ satisfies $\eqref{eq: 8.23}$ with $l\ne
			0$ and \eqref{eq: 8.31}, the solution $u=u(x,t)$ to $\eqref{eq: 8.24}$ satisfies
			$$t^{\np}\|u(\cdot,t)-l-MG(\cdot+aq|l|^{q-1}t,t)\|_p\to 0, \text{ if }
			t\to\infty $$
			for all $p\in[1,\infty]$.
		\end{thm}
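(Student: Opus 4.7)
The plan is to reduce the problem to the weakly nonlinear regime already treated in Theorem 5.1, following the reduction sketched in Comment 6 preceding the statement. First I would introduce $v(x,t) = u(x,t) - l$; since $l$ is a constant solution of the heat operator, $v$ solves
\begin{equation*}
v_t - \Delta v = a \cdot \nabla\bigl(g(v)\bigr), \qquad v(0) = u_0 - l \in L^1(\R^N),
\end{equation*}
with $g(s) = |s+l|^{q-1}(s+l) - |l|^{q-1}l$. Because $l \ne 0$ and $q \ge 1$, the map $\sigma \mapsto |\sigma|^{q-1}\sigma$ is $C^1$ in a neighbourhood of $\sigma = l$, so $g \in C^1(\R)$ with $g(0) = 0$ and $g'(0) = q|l|^{q-1}$.

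Next I would absorb the linearized convection by a Galilean shift: setting $c = a\,q\,|l|^{q-1}$ and $w(x,t) = v(x - ct, t)$, a direct computation (using $w_t = v_t - c\cdot\nabla v$ and $\Delta w = \Delta v$) yields
\begin{equation*}
w_t - \Delta w = a \cdot \nabla\bigl(h(w)\bigr), \qquad w(0) = u_0 - l,
\end{equation*}
where $h(s) := g(s) - q|l|^{q-1} s$. A second-order Taylor expansion at $s=0$ gives $h(s) = \tfrac{q(q-1)}{2}|l|^{q-2}s^2 + o(s^2)$; in particular $h \in C^1(\R)$, $h(0) = 0$, and $|h(s)| \le C s^2$ on $|s|\le 1$.

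The decisive point is that $h$ satisfies the hypothesis \eqref{eq: 6.4} of Theorem 5.1. The assumption $N > 1$ yields $(N+1)/N < 2$, so
\begin{equation*}
\frac{|h(s)|}{|s|^{(N+1)/N}} \;\le\; C\,|s|^{(N-1)/N} \;\longrightarrow\; 0 \qquad \text{as } |s|\to 0.
\end{equation*}
Theorem 5.1 therefore applies to $w$, with conserved mass $M = \int(u_0 - l)\,dx$, and gives $t^{\np}\|w(t) - MG(t)\|_p \to 0$ for every $p \in [1,\infty]$. Undoing the shift, $u(x,t) - l - MG(x+ct,t) = w(x+ct,t) - MG(x+ct,t)$, and since translation is an $L^p$-isometry the desired convergence follows immediately.

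I do not expect a serious obstacle: the bulk of the work was already carried out in Chapter 5, and the present statement is essentially a corollary obtained by a change of dependent variable and a Galilean shift. The only care points are verifying that $g$ is genuinely $C^1$ when $1 \le q < 2$ (which uses $l \ne 0$ to stay away from the singularity of $|\sigma|^{q-1}\sigma$ at the origin) and that the constant $M$ is correctly identified after the shift. The dimensional restriction $N > 1$ enters sharply through the inequality $(N+1)/N < 2$, which is precisely what turns the quadratic leading term of $h$ into a subcritical perturbation of the heat equation.
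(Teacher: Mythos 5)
Your proposal is correct and follows essentially the same route as the paper: subtract the constant state $l$, perform the Galilean shift $w(x,t)=v(x-aq|l|^{q-1}t,t)$ to remove the linear part of the convection, observe that the remaining nonlinearity $h(s)=O(s^2)$ satisfies the hypothesis $\lim_{|s|\to0}h(s)/|s|^{(N+1)/N}=0$ precisely because $N>1$, and invoke the weakly nonlinear asymptotics theorem. The only cosmetic difference is that the paper phrases the smallness of $h$ relative to $|s|^{1/N}s$ rather than $|s|^{(N+1)/N}$, which is the same condition.
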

		
		As
		$$\lim_{|s|\to0}\frac{h(s)}{s^2}=q\frac{(q-1)}{2} l^{q-2}$$
	in dimension $N=1$, a different result holds  since, by virtue of the results of Chapter 6, we know that the solution of \eqref{eq: 8.29} behaves like the self-similar solution
		$$w(x,t)=t^{-1/2} f_M(t^{-1/2}x)$$
	of the equation
		\begin{equation} \label{eq: 8.32}
			w_t-w_{xx}=\frac{aq(q-1)}{2} l^{q-1}(w^2)_x
		\end{equation}
		such that
		$$\int_{-\infty}^\infty f_M(x)\,dx=M.$$
		
		We may obtain the following result.
		\begin{thm}
			If $N=1$, $q\ge1$ and the initial data $u_0$ satisfies $\eqref{eq: 8.23}$ with $l\ne0$
			and $\eqref{eq: 8.31}$, the solution $u=u(x,t)$ to $\eqref{eq: 8.24}$ satisfies
			$$t^{\np}\bigl\|u(\cdot,t)-l-t^{-1/2}
			f_M\bigl(t^{-1/2}(\cdot+aq|l|^{q-1}t)
			\bigr)\bigr\|_p\to0,  \quad \text{ when } \quad  t\to\infty $$
			for all $p\in[1,\infty]$, where $f_M$ is the self-similar profile with mass
			$M$ of the Burgers equation $\eqref{eq: 8.32}$.
		\end{thm}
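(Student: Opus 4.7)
The strategy is to carry out the three successive reductions already indicated in the commentary preceding the statement, and then to invoke the self-similar asymptotic theory of Chapter 7 applied to the resulting Burgers-type equation.

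\emph{Step 1: Reduction to zero limit at infinity.} Set $v(x,t)=u(x,t)-l$. By \eqref{eq: 8.23}, $v_0=u_0-l\in L^1(\R)$, with $\int v_0\,dx=M$ by \eqref{eq: 8.31}; moreover $v$ solves the equation \eqref{eq: 8.25} with nonlinearity $g$ as in \eqref{eq: 8.26}, and one checks immediately $g\in C^1(\R)$ with $g(0)=0$ since $q\ge 1$. Taylor's theorem at $s=0$ yields the expansion \eqref{eq: 8.27}, identifying the leading linear term $q|l|^{q-1}s$ and the leading quadratic term $\tfrac{q(q-1)}{2}l^{q-2}s^2$.

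\emph{Step 2: Removing the linear convection.} The Galilean change $w(x,t)=v(x-aq|l|^{q-1}t,t)$ kills the linear-in-$s$ part of $g$, and $w$ solves \eqref{eq: 8.29} with $h(s)=g(s)-q|l|^{q-1}s$ satisfying \eqref{eq: 8.30}. In particular, setting $\alpha=\tfrac{q(q-1)}{2}l^{q-2}$, one has $h(s)/s^2\to \alpha$ as $s\to 0$.

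\emph{Step 3: Asymptotic self-similar behavior of $w$.} In dimension $N=1$ the critical exponent is $1+1/N=2$, so $h$ is precisely in the self-similar regime, with formal limit flux $s\mapsto\alpha s^2$. The plan is to repeat the proof of Theorem~6.4 (the perturbative/Galaktionov--V\'azquez argument of Section~6.6) with the reference equation being the Burgers-type equation
\begin{equation*}
W_s+LW-\tfrac{1}{2}W=a\alpha(W^{2})_y \quad \text{in }\R\times(0,\infty),
\end{equation*}
i.e.\ the self-similar form of \eqref{eq: 8.32}. More precisely: pass to the similarity variables \eqref{eq: 7.19} to obtain an equation for $W(y,s)=e^{s/2}w(e^{s/2}y,e^s-1)$ of the form
\begin{equation*}
W_s+LW-\tfrac{1}{2}W=a\,e^{s}\bigl(h(e^{-s/2}W)\bigr)_y;
\end{equation*}
using \eqref{eq: 8.30} the right-hand side splits into the autonomous critical flux $a\alpha(W^{2})_y$ plus a remainder which vanishes uniformly on bounded sets of $W$ as $s\to\infty$. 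The trajectory $\{W(s)\}$ is relatively compact in $L^2(K)$ by the weighted Sobolev estimates of Proposition~6.4 (whose proof uses only the decay $\|w(t)\|_\infty\lesssim t^{-1/2}$ and the $L^1$-boundedness given by the contraction property of Chapter 4). The $\omega$-limit set of the unperturbed Burgers-type equation, restricted to the fixed mass $M$, reduces to the single self-similar profile $f_M$ constructed via the Hopf--Cole transform in Chapter 2 (Section 2.2); its uniform stability with respect to initial data is the $L^1$-contraction already provided by the Hopf--Cole formula. Applying Theorem~6.5 yields
\begin{equation*}
t^{\op}\bigl\|w(\cdot,t)-t^{-1/2}f_M(\cdot/\sqrt{t})\bigr\|_p\to 0,\qquad t\to\infty,\quad p\in[1,\infty].
\end{equation*}

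\emph{Step 4: Conclusion.} Undoing the changes $v(x,t)=w(x+aq|l|^{q-1}t,t)$ and $u=v+l$, which are isometries of every $L^p$-norm, gives exactly the claimed convergence.

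\emph{Where the difficulty sits.} Steps~1, 2 and 4 are algebraic manipulations; the only substantive point is Step 3, where one must verify that the hypotheses of Theorem~6.5 hold with the Burgers-type reference equation in place of the $|u|^{1/N}u$-equation used in the proof of Theorem~6.4. The compactness assertion~(a) and the identification of the limit~(b) require the uniform $L^\infty\cap W^{1-\varepsilon,p}$ control of the rescaled trajectory already proved in Chapter~4 together with \eqref{eq: 8.30}; the main genuinely nontrivial input is the uniform stability~(c) of the one-dimensional Burgers self-similar profile, which is an immediate consequence of the explicit Hopf--Cole representation from Chapter 2 and the strong $L^1$-contraction it furnishes.
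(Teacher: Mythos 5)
Your proposal is correct and follows essentially the same route as the paper: subtract the constant state $l$, perform the Galilean change of variables to remove the linear part of the flux, and then invoke the self-similar asymptotic theory of Chapter 6 (the Galaktionov--Vázquez perturbation argument around the Burgers reference equation, whose profiles and $L^1$-stability come from the Hopf--Cole analysis of Chapter 2). The paper itself only sketches this last step by citation, so your fleshing out of how Theorem 6.5's hypotheses are verified for the quadratic flux is a faithful elaboration rather than a different argument.
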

		
		\bigskip
		7.---\enspace
In \cite{EZ3} we have studied the self-similar solutions to convection-diffusion equations of the form
		\begin{equation} \label{eq: 8.33}
			u_t-\Delta u+|u|^{p-1}u=a\cdot\nabla(|u|^{q-1}u) \text{ in }  
			\R^N\times(0,\infty).
		\end{equation}
		
		The case $a=0$ was addressed in \cite{BrPeT} and \cite{EK1} where it's proven, in
		particular, that if \eqn{1<p<\frac{2}{N}}, there exists a unique self-similar solution of the form
		\begin{equation} \label{eq: 8.34}
		u(x,t)=t^{-\frac{1}{p-1}} f(t^{-1/2}x) 
		\end{equation}
		with $f\in H^1(K)$,$f>0$.
In \cite{EK1} it is likewise shown that there exist
multiple solutions in $H^1(K)$ that change signs.

On the other hand, in \cite{BrPeT} the existence of a family of self-similar solutions whose profiles satisfy
		$$\lim_{|x|\to\infty} |x|^{2/(p-1)}f_0(x)=b\ge0$$
		is shown. It is a one-parameter family of  parameter $b$. 
In \cite{EKM} it is shown that these solutions allow for proving the asymptotic behavior of the solutions to \eqref{eq: 8.33} with $a=0$, while again assuming that the datum $u_0$ of $u$ is such that the limit of $|x|^{2/(p-1)}u_0(x)$  when  $|x|\to\infty$ exists and is non-negative.
		
In \cite{EZ3} we have demonstrated the existence of self-similar solutions of the type \eqref{eq: 8.34} for all $a\in\R^N$,\eqn{1<p<1+\frac{2}{N}} and
\eqn{q=\frac{p+1}{2}}. The uniqueness has been proven only for $ |a| $ sufficiently large.
		
It is important to note that the positive solution is unique in the class $H^1(K)$ but not in general, because $$u(x,t)=t^{-\frac{1}{p-1}}\delfrac({1}{p-1})^{\frac{1}{(p-1)}}.$$ is a self-similar solution to \eqref{eq: 8.33}.
		
As we have previously indicated, in the case $ a = 0 $ there are radial and positive self-similar solutions of \eqref{eq: 8.33} that tend to zero when
$ | x | \to \infty $ but whose profile does not belong to $ H^1(K) $ (\cite{BrPeT}).
		
		\bigskip
		8.---\enspace

The functions of type \eqref{eq: 8.34} are very singular solutions of the equation \eqref{eq: 8.33}. Indeed,  when \eqn{1<p<1+\frac{2}{N}}, it can be seen that
		\begin{align*}
			{t^{\left(\frac{1}{p-1}-\frac{N}{2}\right)}
				\int_{\R^N} u(x,t)\varphi(x)\,dx=t^{-\frac{N}{2}}
				\int_{\R^N} f(t^{-1/2}x)\varphi(x)\,dy}\\
			{\to\biggl(\int_{\R^N} f(x)\,dx\biggr)\varphi(0),\qquad\forall
				\varphi\in \BC(\R^N).}
		\end{align*}
As \eqn{\frac{1}{p-1}>\frac{N}{2}}, it can be seen that the solution $u$ in the instant $t=0$, presents a singularity stronger than the Dirac mass, that is, than the source-type solutions.
The source-type solutions of \eqref{eq: 8.33} are those which satisfy the initial condition
		\begin{equation} \label{eq: 8.35}
			u(0)=c\delta
		\end{equation}
where $c\in\R$ and $\delta$ is the Dirac mass in the origin.
		
In \cite{BrF} it is shown that if $a=0$ and \eqn{1<p<1+\frac{2}{N}} the system
\eqref{eq: 8.33}, \eqref{eq: 8.35} admits a unique solution. On the other hand, if $a=0$ and ${p\ge1+\frac{2}{N}}$, H.~Brezis and A.~Friedman prove in the same work that there are no solutions to \eqref{eq: 8.33}, \eqref{eq: 8.35}.
		
The case $a\ne 0$ was first considered by J.~Aguirre and M.~Escobedo \cite{AE}, who prove that if \eqn{1<p<1+\frac{2}{N}} y
\eqn{1<q<1+\frac{1}{N}}, the problem \eqref{eq: 8.33}, \eqref{eq: 8.35} admits a unique
solution.
		
N.~Liu in \cite{Li} proves that if ${p\ge 1+\frac{2}{N}}$ y
${1<q\le 1+\frac{p+1}{2}}$ there cannot exist solutions to \eqref{eq: 8.33}, \eqref{eq: 8.35}.
		
Regarding the very singular solutions, in \cite{Li} it's shown that if ${1<p<1+\frac{2}{N}}$ and ${1\le q\le 1+\frac{p+1}{2}}$, the solution to \eqref{eq: 8.33}, \eqref{eq: 8.35},  when  $c\to\infty$, goes to a very singular solution to  \eqref{eq: 8.33}. These results for $a=0$ were shown by S.~Kamin and L.~Peletier \cite{KaPe}.
		
		\bigskip
		9.---\enspace
		In the introductory chapter of these notes, we mentioned that the asymptotic behavior of the solutions de
		\begin{equation} \label{eq: 8.36}
			u_t-\Delta u=a\cdot\nabla(|u|^{q-1}u)
		\end{equation}
		with \eqn{1<q< 1+\frac{1}{N}} it is radically different from the case \eqn{q>1+\frac{1}{N}}.
		
		Let us first consider the case one spatial dimension $N=1$:
		\begin{align} \label{eq: 8.37}
		u_t-u_{xx}+\frac{1}{q}(|u|^{q-1}u)_x&=0&\hbox{in $\R\times(0,\infty)$}\\
		u(x,0)&=u_0(x). \nonumber 
		\end{align} 
		As we mentioned, the solutions to \eqref{eq: 8.37}  when  $t\to\infty$ behave like the entropy solutions of the hyperbolic equation:
		\begin{align} \label{eq: 8.38}
		u_t+\frac{1}{q}(|u|^{q-1}u)_x&=0&\hbox{in $\R\times(0,\infty)$}\\
		u(x,0)&=M\delta \nonumber
		\end{align} 
		whose uniqueness was shown by T.~P.~Liu and M.~Pierre \cite{LP1} and that are explicitly given as follows:	
		\begin{align} \label{eq: 8.39}
		u_M(x,t)&=\delfrac({x}{t})^{1/(q-1)} \chi_{(0,r(t))}\\
		r(t)&=cM^{(q-1)/q} t^{1/q},\quad c=\delfrac({q}{q-1})^{(q-1)/q}. \nonumber
		\end{align}
		
		In \cite{EVZ1} the following result is shown:
				\begin{thm}[\cite{EVZ3}]
			Let $u_0\in L^1(\R)$ be such that ${M=\int u_0}$. Then, for all $1\le
			p<\infty$ it holds
			\begin{equation} \label{eq: 8.40}
				t^{\frac{1}{q}\left({\scriptscriptstyle 1}-\frac{1}{p}\right)}
				\|u(t)-u_M(t)\|_p\to0 \quad \text{ when } \quad  t\to\infty
			\end{equation}
			where $u_M$ is the entropy solution of $\eqref{eq: 8.38}$.
		\end{thm}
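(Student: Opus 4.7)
The plan is to adapt the self-similar rescaling strategy of Chapter~6, but with the hyperbolic scaling adapted to the convective term rather than the parabolic one. Define $u_\lambda(x,t) = \lambda\, u(\lambda x, \lambda^q t)$; a direct computation shows that $u_\lambda$ satisfies
$$ (u_\lambda)_t - \lambda^{q-2}(u_\lambda)_{xx} + \frac{1}{q}(|u_\lambda|^{q-1}u_\lambda)_x = 0 \quad \text{in } \R\times(0,\infty), $$
with initial data $u_{\lambda,0}(x)=\lambda\, u_0(\lambda x) \rightharpoonup M\delta$ as $\lambda\to\infty$. Since $q<2$, the diffusion coefficient $\lambda^{q-2}\to 0$, so formally the limit equation is the pure hyperbolic conservation law \eqref{eq: 8.38} with datum $M\delta$, whose unique entropy solution is $u_M$ by the Liu--Pierre theorem \cite{LP1}. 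As in Chapters~1 and~6, a short change of variables reduces the conclusion \eqref{eq: 8.40} to
$$ u_\lambda(\cdot,1) \to u_M(\cdot,1) \quad\text{in } L^p(\R) \text{ as } \lambda\to\infty, $$
using that $u_M$ is itself invariant under this rescaling.

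First I would establish the sharp decay estimate $\|u(t)\|_\infty \le C\, t^{-1/q}$ for all $t>0$, with $C$ depending only on $\|u_0\|_1$ and $q$. This is strictly stronger than the generic bound $\|u(t)\|_\infty \le C\, t^{-1/2}$ of Theorem~4.2 and, translated to the rescaled variables, reads $\|u_\lambda(\cdot,t)\|_\infty \le C\, t^{-1/q}$ uniformly in $\lambda$. The derivation would follow the $L^p$--$L^\infty$ machinery of Chapter~4 combined with a Moser--type iteration, but this time squeezing additional dissipation out of the convective term: testing the equation against $|u|^{p-2}u$ and exploiting the structure of $|s|^{q-1}s$ near the origin in the regime $q<1+1/N=2$ should yield a differential inequality strong enough to improve the exponent of $t$ from $-1/2$ to $-1/q$. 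Combined with the $L^1$ bound $\|u_\lambda(\cdot,t)\|_1\le\|u_0\|_1$, this supplies uniform $L^1\cap L^\infty$ control on $\{u_\lambda(\cdot,t)\}_{\lambda>0}$ on every interval $[\tau,T]\subset(0,\infty)$.

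Next I would extract a strongly convergent subsequence. Because the flux $\frac{1}{q}|s|^{q-1}s$ is convex on $[0,\infty)$ and concave on $(-\infty,0]$, an Oleinik--type one--sided gradient estimate, valid for viscous approximations with constants independent of the viscosity parameter, should supply a uniform $\mathrm{BV}$ bound for $u_\lambda(\cdot,t)$ at each $t>0$. Coupled with equicontinuity in time (obtained by controlling $\partial_t u_\lambda$ in a negative--order Sobolev space through the equation itself together with the uniform $L^\infty$ bound and the vanishing of the diffusion coefficient), Helly's selection theorem and Kolmogorov--Riesz compactness would furnish a subsequence with $u_{\lambda_n}(\cdot,t)\to \bar u(\cdot,t)$ in $L^1_{\mathrm{loc}}(\R)$, and by interpolation with the uniform $L^\infty$ bound, in $L^p(\R)$ for every $1\le p<\infty$.

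Finally I would identify the limit $\bar u$. Passing to the limit in the rescaled equation, the vanishing--viscosity structure of the approximation together with the Kruzkov entropy inequality guarantees that $\bar u$ is an entropy solution of the hyperbolic problem with initial datum $M\delta$; by the uniqueness result of Liu and Pierre \cite{LP1}, $\bar u=u_M$. Because the limit is independent of the chosen subsequence, the full family converges, which is exactly \eqref{eq: 8.40}. The hardest part will be the sharp $L^\infty$ estimate: the introduction of these notes explicitly singles it out as \emph{the} principal difficulty of this regime, and without it the uniform bound on the rescaled family collapses, so neither the $\mathrm{BV}$ compactness nor the entropy passage to the limit can be carried out.
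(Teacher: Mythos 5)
Your global architecture --- the hyperbolic rescaling $u_\lambda(x,t)=\lambda\,u(\lambda x,\lambda^q t)$, the observation that the effective diffusion coefficient $\lambda^{q-2}$ vanishes for $q<2$, compactness of the rescaled family, and identification of the limit as the unique Liu--Pierre entropy solution --- is exactly the route the notes take, and you correctly single out the decay estimate $\|u(t)\|_\infty\le C\,t^{-1/q}$ as the crux. But the method you propose for that crux cannot work. Testing the equation against $|u|^{p-2}u$ makes the convective term vanish identically: $\int \frac{1}{q}(|u|^{q-1}u)_x\,|u|^{p-2}u\,dx=\int\bigl(H_p(u)\bigr)_x\,dx=0$ with $H_p(z)=\int_0^z F'(s)|s|^{p-2}s\,ds$, as Observation 4.3 of Chapter 4 explicitly emphasizes. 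There is no ``additional dissipation'' to squeeze out of the convection by energy or Moser-iteration arguments; they are structurally blind to the flux and can only reproduce the heat-equation rate $t^{-1/2}$, which for $1<q<2$ is strictly weaker than the required $t^{-1/q}$.

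The notes obtain the sharp rate by a genuinely hyperbolic mechanism: the Oleinik-type entropy inequality $(u^{q-1})_x\le 1/t$ of Lemma 7.6, proved by showing that $w=(u^{q-1})_x$ satisfies a parabolic equation for which $W(t)=1/t$ is a supersolution and invoking comparison. One then integrates the resulting one-sided bound $u^{q-1}(x,t)\ge u^{q-1}(x_0,t)-(x_0-x)/t$ against the conserved mass $M$ to get $0\le u(x,t)\le\bigl(qM/((q-1)t)\bigr)^{1/q}$ for nonnegative solutions, and extends to sign-changing data by the maximum principle. You do invoke an Oleinik-type one-sided gradient estimate later, but only for BV compactness; it is in fact the same estimate that must carry the decay, and unless your first step is rerouted through it the uniform $L^\infty$ control on $\{u_\lambda\}$ collapses and the remainder of your argument (which otherwise matches the paper's sketch) has nothing to stand on.
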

		
From this results we deduce that the asymptotic behavior of the solutions to \eqref{eq: 8.37} is described by the one-parameter family of fundamental entropy solutions of \eqref{eq: 8.38}. The diffusion term of the system \eqref{eq: 8.37} therefore tends to disappear when $ t \to \infty $. We will say that this type of behavior is {\em strongly non-linear}.
		
		Note that from \eqref{eq: 8.37} and the structure of the solution $u_M$ to \eqref{eq: 8.38} we deduce that the solutions to \eqref{eq: 8.37} satisfy:
		\begin{equation} \label{eq: 8.41}
			\|u(t)\|_\infty\le Ct^{-\frac{1}{q}},\qquad\forall t>0
		\end{equation}
which, given that $ 1 <q <2 $, gives us faster decay  when $ t \to \infty $ to that of the linear heat equation or when $ q \ge2 $.

In fact, the first difficulty that we must solve to prove Theorem 7.5 is obtaining an estimate of type \eqref{eq: 8.41}. We can obtain this estimate from the following entropy inequality:
		\begin{lem}[\cite{EVZ1}]
			Let $1<q<2$ and $u=u(x,t)$ be a non-negative solution of \eqref{eq: 8.37}.
			Then it holds
			\begin{equation} \label{eq: 8.42}
				(u^{q-1})_x\le1/t.
			\end{equation}
		\end{lem}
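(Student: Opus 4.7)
The plan is to derive an Oleinik-type entropy inequality for $\rho := (u^{q-1})_x$ by a maximum principle argument, mirroring the classical Oleinik estimate for scalar conservation laws with convex flux $f(u)=u^q/q$ (for which $f'(u)=u^{q-1}$). The strategy has three computational steps followed by a regularization.

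First, I would derive the PDE satisfied by $w := u^{q-1}$. On the set $\{u>0\}$, using $u_t = u_{xx} - u^{q-1}u_x = u_{xx} - w u_x$ and the chain rule, a direct calculation yields
$$w_t - w_{xx} + w\, w_x = \frac{2-q}{q-1}\,\frac{w_x^{\,2}}{w}.$$
Since $1<q<2$, the coefficient $(2-q)/(q-1)$ is positive, so the right-hand side is non-negative. Differentiating this equation in $x$ and writing $\rho := w_x$ gives
$$\rho_t - \rho_{xx} + w\,\rho_x + \rho^{\,2} \;=\; \frac{2-q}{q-1}\!\left(\frac{2\rho\,\rho_x}{w} - \frac{\rho^{\,3}}{w^{\,2}}\right).$$

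Next, I would apply the maximum principle. Fix $t>0$ and let $M(t) := \sup_x \rho(\cdot,t)$; by the $L^p$-decay estimates of Chapter~4 (and, after regularization, smoothness of $u$ and decay of its derivatives), the supremum is attained at some $x_\ast(t)$ where $\rho_x = 0$ and $\rho_{xx}\le 0$. Substituting in the equation above gives
$$\rho_t(x_\ast,t) \;\le\; -\rho^{\,2} - \frac{2-q}{q-1}\,\frac{\rho^{\,3}}{w^{\,2}}.$$
If $\rho(x_\ast,t)>0$, the second term is non-positive, so $\rho_t \le -\rho^{\,2}$, and hence formally $\dot M(t) \le -M(t)^{2}$. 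Since the maximal solution of $\dot M = -M^{2}$ starting from $+\infty$ at $t=0$ is exactly $M(t) = 1/t$, we conclude $\rho(x,t) \le 1/t$, which is the asserted inequality $(u^{q-1})_x \le 1/t$.

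The main obstacle is making this rigorous, since $u$ may vanish and then $w^{-1}$ blows up, the equation for $w$ degenerates, and $\rho$ need not be smooth. The standard remedy is a double approximation: replace $u_0$ by $u_0+\epsilon$ (so that $u\ge \epsilon>0$ by the comparison principle of Theorem~4.3) and solve on a ball $B_R$ with appropriate boundary conditions, so that $w^\epsilon := (u^\epsilon)^{q-1}$ is smooth, bounded below, and the derivation of the PDEs for $w^\epsilon$ and $\rho^\epsilon = w^\epsilon_x$ is legal; the strong maximum principle then yields $\rho^\epsilon(x,t)\le 1/t$ unconditionally (the ODE argument is robust because the source $-(2-q)(q-1)^{-1}\rho^{\,3}/w^{\,2}$ has the correct sign for $\rho>0$, independently of $\epsilon$). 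One then passes to the limit first $R\to\infty$ and then $\epsilon\to 0^+$ using the $L^1$-contraction property of Chapter~4 and lower semicontinuity of $\rho$ in the distributional sense, which preserves the one-sided bound. A second, minor difficulty is the case in which the spatial supremum of $\rho$ is not attained; this is absorbed into the same approximation scheme or handled by the classical trick of subtracting $\delta/(t-t_0)$ and letting $\delta\downarrow 0$.
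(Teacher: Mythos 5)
Your proposal is correct and follows essentially the same route as the paper: derive the parabolic equation satisfied by $(u^{q-1})_x$ and compare with the supersolution $W(t)=1/t$ (your maximum-point ODE argument is just the standard way of carrying out that comparison, and your computation in fact matches the paper's equation \eqref{eq: 8.43} once its evident typos are corrected). The only difference is that you spell out the regularization needed where $u$ vanishes, which the paper omits entirely.
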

		
		\begin{proof}
			Observe that the function $w(u^{q-1})_x$ satisfies the equation
			\begin{equation} \label{eq: 8.43}
				w_t-w_{xx}+\biggl[z-\beta\frac{w}{z}\biggr]w_x
				+w^{2^3}+\beta\frac{w}{z^2}=0
			\end{equation}
			with $z=u^{q-1}$ and $\beta=(2-q)/(q-1)$. It can be checked that 
			\eqn{W(t)=\frac{1}{t}} is a super-solution to the equation \eqref{eq: 8.43}. 
By comparison we deduce
			\begin{equation*}
				w(x,t)\le W(t).
			\end{equation*}
		\end{proof}
		
From the entropy inequality \eqref{eq: 8.43} we deduce that if $u$ is a non-negative solution to \eqref{eq: 8.37} with ${M=\int u_0}$, then
		\begin{equation} \label{eq: 8.44}
			0\le u(x,t)\le \delfrac({qM}{(q-1)t})^{1/q}.
		\end{equation}
Indeed, given arbitrary $t>0$ and $x_0\in\R$, thanks to  \eqref{eq: 8.42}, it holds that
		$$u^{q-1}(x,t)\ge B-\delfrac({(x_0-x)}{t}) if x\le x_0  $$ with $B=u^{q-1}(x_0,t)$. Therefore,
		$$u^{q-1}(x,t)\ge\frac{1}{t}(x-x_1) \quad \text{ if } 0\le x-x_1\le Bt $$ with $x_1=x_0-Bt$. 
	Integrating in this inequality with respect to $x$
		we obtain
		$$M=\int u(x,t)\,dx\ge\int_0^{Bt}\delfrac({x}{t})^{1/(q-1)}\,dx
		=\delfrac({q-1}{q}) u^q(x_0,t)t$$
		from where we deduce  \eqref{eq: 8.44}.
		
Having demonstrated \eqref{eq: 8.41} for non-negative solutions, this estimate can be extended to any solution by the maximum principle.
		
The estimate \eqref{eq: 8.41} allows to conclude the proof of Theorem 7.5 by means of a rescaling argument. We define 
		$$u_\lambda(x,t)=\lambda u(\lambda x,\lambda^qt)$$
		which satisfies
		\begin{align*}
		&u_{\lambda,t}-\lambda^{q-2} u_{\lambda,xx} +u^{q-1}_\lambda
		u_{\lambda,x} =0\\
		&u_\lambda(x,0)=\lambda u_0(\lambda x)
		\end{align*}
		and we prove that
		$$u_\lambda(\cdot,t)\to u_M(\cdot,t) \text{ in } L^p(\R)  \quad \text{ when } 
		\lambda\to\infty $$
		for all $1\le p<\infty$ and all $t>0$ where $u_M$ is the unique entropy solution of \eqref{eq: 8.38}.
	
These ideas allows to address the case of spatial dimensions  $N\ge 2$, granted additional technical difficulties.
		
Suppose that $a=(0,\ldots,0,1)$ and we consider the equation
		\begin{align} \label{eq: 8.45}
		u_t-\Delta u&=\partial_N(|u|^{q-1}u)\\
		u(x,0)&=u_0(x)
		\end{align}
	with \eqn{1<q<1+\frac{1}{N}} (henceforth $\partial_N$ designates the partial derivative
		in the direction $x_N$) with a non-negative initial data.
		
In this case, the entropy inequality that we obtain for the non-negative solutions is:
		\begin{equation} \label{eq: 8.46}
			\partial_N(u^{q-1})\le\frac{1}{t} 
		\end{equation}
		from where it follows that
		\begin{equation} \label{eq: 8.47}
			\|u(t)\|_\infty\le Ct^{-(N+1)/2q}. 
		\end{equation}
Again, the estimate \eqref{eq: 8.47} gives a better decay than that of the solutions to the heat equation or  \eqref{eq: 8.45} for ${q\geq 1+\frac{1}{N}}$.
		
After proving the estimate \eqref{eq: 8.47} we introduce the scaling
	$$u_\lambda(x,t)=\lambda^\alpha u(\lambda^{1/2} x',\lambda^\beta
		x_N,\lambda t)$$
		con $x'=(x_1,\ldots,x_{N-1})$,$\alpha=(N+1)/2q$ y
		$\beta=(1+N+q-Nq)/2q$. The function $u_\lambda$ satisfies
		\begin{align*}
		&u_{\lambda,t}-\Delta' u_\lambda-\partial_N(|u_\lambda|^{q-1}u)=
		\lambda^{1-2\beta}\partial_N^2 u_\lambda\\
		&u_\lambda(x,0)=\lambda^\alpha u_0(\lambda^{1/2} x',\lambda^\beta x_N)
		\end{align*}
		where $\Delta'$ is the laplacian in the variables $(x_1,\ldots,x_{N-1})$.
		
	It can be shown that, for $t>0$ fixed,
		$$u_\lambda(\cdot,t)\to u_M(\cdot,t) \text{ in } L^p(\R^N)  \quad \text{ when }  t\to\infty $$
		for all $1\le p<\infty$, where $u_M(x', x_N,t)$ is the non-negative entropy solution of the equation
		\begin{align} \label{eq: 8.48}
		u_t-\Delta' u&=\partial_N(|u|^{q-1}u)\\
		u(x,0)&=M\delta \nonumber 
		\end{align} 
		
		In this way we can conclude that
		$$t^{\alpha(p-1)/p}\|u(\cdot,t)- u_M(\cdot,t)\|_p   \quad \text{ when } 		\lambda\to\infty $$
		for all $1\le p<\infty$.
		
The most notable difference with respect to the proof done in one spatial dimension is the need to prove the existence and uniqueness of the entropy solution for \eqref{eq: 8.48}. This result when $ N = 1 $ was proven in \cite{LP1}. 
In \cite{EVZ2, EVZ3} we prove the existence of a unique non-negative entropy solution to \eqref{eq: 8.48}. 
The uniqueness without any restriction on the sign of the solution is an open problem.

The entropy condition that we introduce in \cite{EVZ1, EVZ2}, and whuch allows for characterizing the unique nonnegative fundamental solution is a variant
of the classical condition of S. ~ Kruzhkov \cite{Kr} that takes into account the presence of the term of partial diffusion. 
More precisely, it's the following condition: for all non-negative test function $\psi=\psi(x') \in \mathcal{D}(\R^{N-1})$ one must have
		\begin{align*}
	&{\frac{\partial}{\partial t}|u-\psi(x')|-\Delta'|u-\psi(x')|}\\
	&{\le \partial_N\bigl[\bigl(|u|^{q-1} u-|\psi(x')|^{q-1}\psi(x')\bigr)
				\sgn\bigl(u-\psi(x')\bigr)\bigr]
				+\sgn\bigl(u-\psi(x')\bigr)\Delta'\psi(x').}
		\end{align*}
Under this entropy condition, the uniqueness of the  entropy solutions with initial data in $L^1(\R^N)\cap L^\infty(\R^N)$ is shown in \cite{EVZ3} by means of the Kruzhkov method \cite{Kr}. 
The uniqueness of the fundamental non-negative solution requires additional arguments and is covered in \cite{EVZ2}.

	\bibliographystyle{alpha}
	\bibliography{refs}{}
	

	\Adresses
	
\end{document}